\documentclass[11pt,leqno]{article}
\usepackage{algorithm}
\usepackage{algpseudocode}
\usepackage{amsfonts}
\usepackage{amsmath}
\usepackage{amssymb} 
\usepackage{amsthm} 
\usepackage[title]{appendix} 
\usepackage{autobreak}
\usepackage{bigdelim} 
\usepackage{blkarray}
\makeatletter
\def\BA@fnsymbol#1{\ensuremath{
  \ifcase#1\or *\or \dagger\or \ddagger\or \mathsection\or \mathparagraph\or \|\or **\or \dagger\dagger\or \ddagger\ddagger \else@ctrerr\fi}}%
\makeatother
\usepackage{bm}
\usepackage{booktabs}
\usepackage{cancel} 
\usepackage{cases}
\usepackage{chngcntr}
\usepackage{colortbl} 
\usepackage{comment}
\usepackage{empheq} 
\usepackage{enumitem}
\setlist{leftmargin=*} 
\usepackage{fancyhdr} 
\usepackage{fancyvrb} 
\usepackage{framed} 
\usepackage{graphicx}
\usepackage{hyperref} 
\hypersetup{
  colorlinks,
  linkcolor={red!50!black},
  citecolor={blue!50!black},
  urlcolor={blue!80!black}
}
\usepackage[capitalize]{cleveref} 
\usepackage{mathrsfs} 
\usepackage{multirow} 
\usepackage[sort&compress,numbers]{natbib}
\usepackage{pifont}
\newcommand{\cmark}{\ding{51}}%
\newcommand{\xmark}{\ding{55}}%
\usepackage{setspace} 
\usepackage{stmaryrd}
\usepackage{subfig} 
\usepackage{pdflscape} 
\usepackage{pgfplots} 
\usepackage{pxrubrica} 
\usepackage{url}
\usepackage{tabularx}
\usepackage[colorinlistoftodos,prependcaption,textsize=footnotesize]{todonotes}

\usepackage[top=35truemm,bottom=35truemm,left=30truemm,right=30truemm]{geometry} 
\makeatletter

\@addtoreset{equation}{section}
\makeatother

\algtext*{EndWhile}
\algtext*{EndIf}
\algtext*{EndFor}

\theoremstyle{plain}
\newtheorem{theorem}{Theorem}[section]
\newtheorem{proposition}[theorem]{Proposition}%
\newtheorem{corollary}[theorem]{Corollary}
\newtheorem{lemma}[theorem]{Lemma}

\theoremstyle{definition}

\theoremstyle{remark}
\newtheorem{remark}[theorem]{Remark}%
\DeclareMathOperator*{\adj}{adj}

\DeclareMathOperator*{\setint}{int}
\DeclareMathOperator*{\setspan}{span}

\DeclareMathOperator*{\tr}{tr}

\DeclareMathOperator*{\Ker}{Ker}

\newcommand{\relmiddle}[1]{\mathrel{}\middle#1\mathrel{}}

\newcommand{\abs}[1]{\lvert #1 \rvert} 

\newcommand{\bbA}{\mathbb{A}}

\newcommand{\bbC}{\mathbb{C}}

\newcommand{\bbH}{\mathbb{H}}

\newcommand{\bbO}{\mathbb{O}}

\newcommand{\bbR}{\mathbb{R}}

\newcommand{\calD}{\mathcal{D}}

\newcommand{\calH}{\mathcal{H}}

\newcommand{\calS}{\mathcal{S}}

\newcommand{\CP}{\mathcal{CP}}
\newcommand{\COP}{\mathcal{COP}}

\makeatletter
\newcommand{\dotpplus}{\mathbin{\text{\@dotpplus}}}
\newcommand{\@dotpplus}{%
  \ooalign{\hidewidth\hbox{$\circ$}\hidewidth\cr$\m@th+$\cr}%
}
\makeatother

\makeatletter
\newcommand{\dotminus}{\mathbin{\text{\@dotminus}}}
\newcommand{\@dotminus}{%
  \ooalign{\hidewidth\hbox{$\circ$}\hidewidth\cr$\m@th-$\cr}%
}
\makeatother

\newcommand{\RNum}[1]{\uppercase\expandafter{\romannumeral #1\relax}} 
\newcommand{\Rnum}[1]{\lowercase\expandafter{\romannumeral #1\relax}} 

\title{Classification of facial exposedness of completely positive cones over symmetric cones}

\makeatletter
\let\@fnsymbol\@arabic
\makeatother

\author{
\normalsize
    Mitsuhiro Nishijima\thanks{Department of Industrial and Systems Engineering, Keio University, 3-14-1 Hiyoshi, Kohoku-ku, Yokohama-shi, 2238522, Kanagawa, Japan. ({\tt nishijima@keio.jp}).}
}

\begin{document}
\maketitle

\begin{abstract}\noindent
We classify the facial exposedness of completely positive cones over symmetric cones in terms of the rank of the associated Euclidean Jordan algebras.
The completely positive cones are facially exposed when the rank is at most $2$, but are not facially exposed when the rank is at least $5$.
Facial exposedness is not completely determined by the rank when the rank is $3$ or $4$, but we provide a characterization in each case.
The resulting classification of facial exposedness in fact agrees with the corresponding classification of spectrahedrality for completely positive cones.
\end{abstract}
\vspace{0.5cm}

\noindent
{\bf Key words. }completely positive cones, symmetric cones, facial structure, facial exposedness, spectrahedrality
%

\section{Introduction}
For a closed cone $K$ in a finite-dimensional real inner product space, we define
\begin{equation}
\CP(K) \coloneqq \left\{\sum_{i=1}^k a_i\otimes a_i \relmiddle| k\ge 1 \text{ and } a_i \in K \text{ for all $i = 1,\dots,k$}\right\},\label{eq:CPcone}
\end{equation}
and refer to it as the \emph{completely positive cone over $K$}.
A typical choice for $K$ in this paper is a \emph{symmetric cone}, namely, a cone that is self-dual and homogeneous.
Symmetric cones form a broad and fundamental class of convex cones, including nonnegative orthants, second-order cones, cones of real symmetric positive semidefinite matrices, and their finite direct products~\cite{FK1994}.
In the case where $K$ is a nonnegative orthant, the associated completely positive cone is essentially the cone of classical completely positive matrices, and a vast literature exists on this cone~\cite{SB2021}.
Motivated by the ability of completely positive cones to convexify various NP-hard problems~\cite{Bur2009,Bur2012,BMP2016,BD2012,BDd+2000}, researchers have studied the membership problem~\cite{Orl2021,HX20XX,FGN+2024}, approximations~\cite{NN2024_App,NN2024_Gen}, and the geometry~\cite{NL2025,NL2026,Nis2026} of completely positive cones and their duals, namely, copositive cones over symmetric cones.

This paper is devoted to investigating the \emph{facial exposedness}, a geometric property, of completely positive cones over symmetric cones.
A convex cone is called facially exposed if every face can be written as the intersection of the convex cone and its supporting hyperplane.
From the viewpoint of mathematical optimization, the notion of facial exposedness arose in connection with facial reduction algorithms~\cite{BW1981_Fac,BW1981_Reg}.
Facial exposedness is also important because certain geometric properties such as niceness~\cite{Pat2007,RT2019} and spectrahedrality~\cite{RG1995} require facial exposedness.
Spectrahedrality of a set, a notion that will play a role later in this paper, refers to the property that the set can be represented by a positive semidefinite constraint without extra variables.
See \cite{LRS2022} and \cite[section~1]{NL2025} for more detailed connections with other properties.

The facial exposedness of copositive cones over symmetric cones is completely settled.
By \cite[Theorem~3.3]{NL2025}, we see that a copositive cone is facially exposed if and only if the dimension of the underlying symmetric cone is at most $1$.

It is natural to ask whether the duals, completely positive cones over symmetric cones, are facially exposed.
When the underlying symmetric cone is a nonnegative orthant, the associated completely positive cone is facially exposed if and only if the dimension of the nonnegative orthant is at most $4$~\cite{Zha2018}; see also \cite{Kos2024,Kos2025,Zha2020} for the non-facial exposedness of completely positive cones over nonnegative orthants of dimension at least $5$.
When the underlying symmetric cones are a second-order cone and the direct product of the $1$-dimensional nonnegative orthant and a second-order cone, the associated completely positive cones are spectrahedral by Theorems~1 and 3 of \cite{SZ2003}, respectively, and hence they are facially exposed.
These results are independent of the dimension of the second-order cone, suggesting that one should look at a structural parameter of symmetric cones other than their dimension.

\begin{table}[tbp]
\caption{The facial exposedness of completely positive cones over symmetric cones in terms of the rank of the associated EJAs.
The symbol {\cmark} means that the completely positive cones are facially exposed without additional assumptions on the EJAs, and the symbol {\xmark} means that they are not.}
\label{tab:summary}
\begin{center}
\begin{tabular}{rl}
\toprule
Rank & Facial exposedness\\
\midrule
$\le 2$    & {\cmark} (\Cref{thm:rank1}) \\
$3$ & iff an EJA is not simple (\Cref{thm:rank3}) \\
$4$ & iff an EJA is Jordan-isomorphic to the Hadamard EJA $\bbR^4$ (\Cref{thm:rank4}) \\
$\ge 5$         & {\xmark} (\Cref{thm:rankge5})  \\
\bottomrule
\end{tabular}
\end{center}
\end{table}
In this paper, we classify the facial exposedness of completely positive cones over symmetric cones.
To do this, we realize symmetric cones as the cones of squares of \emph{Euclidean Jordan algebras} (EJAs) and investigate the facial exposedness of the associated completely positive cones in terms of the \emph{rank} of the EJAs.
Here, the rank of an EJA is a structural parameter whose precise definition is given in \cref{subsec:symcone}.
\Cref{tab:summary} summarizes the classification of the facial exposedness of completely positive cones over symmetric cones.
When the rank is at most $2$, the completely positive cones are facially exposed by existing results and simple arguments.
When the rank is at least $5$, the completely positive cones are not facially exposed because of the non-facial exposedness of completely positive cones over nonnegative orthants of dimension at least $5$.
Challenging situations arise when the rank is $3$ or $4$.
In these cases, additional case distinctions are essential to determine facial exposedness.

The classification of facial exposedness shown in Table~\ref{tab:summary} can also be read as that of spectrahedrality.
As can be seen from the subsequent discussion, if a completely positive cone over a symmetric cone is facially exposed, it is actually spectrahedral.
Note that \cite[Theorem~4.1]{Ros2014} provides an example of a closed convex cone that is facially exposed but not spectrahedral; these properties can be verified using the proof of \cite[Proposition~5]{Pat2013_On}, or alternatively Proposition~2.3 and Corollary~3.5 of \cite{LRS2022}.
Thus, facial exposedness does not imply spectrahedrality in general.
The classification of spectrahedrality of completely positive cones over symmetric cones is related to the question of whether the completely positive cones and their duals are \emph{spectrahedral shadows}, namely projections of spectrahedra~\cite[Question~6.1]{Nis2026}.
This classification problem for spectrahedral shadows in the rank-$3$ and rank-$4$ cases remains open.

The organization of this paper is as follows.
\Cref{sec:preliminary} collects the notation and technical results needed throughout the paper.
\Cref{sec:rankle2,sec:rank3,sec:rank4,sec:rank5} discuss the facial exposedness of completely positive cones over symmetric cones when the ranks of the associated EJAs are at most $2$, equal to $3$, equal to $4$, and at least $5$, respectively.

\section{Preliminaries}\label{sec:preliminary}
\subsection{Basic notation}
Let $\bbR^n$ denote the space of $n$-dimensional real vectors equipped with the standard inner product.
Vectors in $\bbR^n$ are column vectors denoted by boldface lowercase letters, such as $\bm{a}$.
Let $\bm{a}\in \bbR^n$.
Vectors are indexed from $1$, and $a_i$ denotes the $i$th entry of $\bm{a}$.
For $1\le i < j \le n$, we write $\bm{a}_{i:j}$ for the vector obtained by extracting the $i$th to $j$th entries of $\bm{a}$.
The transpose of $\bm{a}$ is denoted by $\bm{a}^\top$.
We use $\bm{0}$ and $\bm{e}_i$ to denote the vector all of whose entries are $0$ and the vector whose $i$th entry is $1$ and the others are $0$, respectively.

Matrices are denoted by boldface uppercase letters, such as $\bm{A}$.
Rows and columns of matrices are indexed from $1$, and $A_{ij}$ denotes the $(i,j)$th entry of a matrix $\bm{A}$.
We use $\bm{O}$, $\bm{I}_n$, and $\bm{J}_n$ to denote the matrix all of whose entries are $0$, the identity matrix of order $n$, and the diagonal matrix of order $n$ whose $(1,1)$th entry is $1$ and the other diagonal entries are $-1$, respectively.
For a square matrix $\bm{A}$, we use $\Ker(\bm{A})$, $\det(\bm{A})$, $\tr(\bm{A})$, $\adj(\bm{A})$, and $\{0\}\oplus \bm{A}$ to denote the kernel of $\bm{A}$, the determinant of $\bm{A}$, the trace of $\bm{A}$, the adjugate of $\bm{A}$, and the square matrix whose entries in the first row and column are $0$ and whose principal submatrix obtained by deleting the first row and column is $\bm{A}$, respectively.
We also use the symbol $\oplus$ to denote the direct sum of vector spaces.
We use $\calS^n$ and $\calS_+^n$ to denote the space of real $n\times n$ symmetric matrices and the cone of positive semidefinite matrices in $\calS^n$, respectively.
For a real symmetric positive definite matrix $\bm{A}$, we define $\sqrt{\bm{A}}$ to be the unique real symmetric positive definite matrix satisfying $(\sqrt{\bm{A}})^2 = \bm{A}$~\cite[page~440]{HJ2013}.

Let $V$ be a finite-dimensional real vector space and $S$ be a set in $V$.
We use $\setspan S$ and $\setint S$ to denote the smallest subspace containing $S$ and the interior of $S$, respectively.
In addition, we define $\bbR S \coloneqq \{ax \mid a\in \bbR,\ x\in S\}$ and $\bbR_+ S \coloneqq \{ax \mid a\ge 0,\ x\in S\}$.
For $x\in V$, we write $\bbR \{x\}$ and $\bbR_+\{x\}$ as $\bbR x$ and $\bbR_+x$, respectively.

We further assume that $V$ is equipped with an inner product denoted by $\bullet$.
For a set $S$ in $V$, we write $S^\perp$ for the space of $x\in V$ such that $x\bullet y = 0$ for all $y\in S$.
For a set $K$ in $V$, we call $K$ a \emph{cone} if $ax\in K$ for all $a\ge 0$ and $x\in K$, and we define $K^*$, the dual of $K$, as the cone of $x\in V$ such that $x\bullet y \ge 0$ for all $y\in K$.
For two closed cones $K_1$ and $K_2$, we say that $K_1$ is \emph{linearly isomorphic} to $K_2$ if there exists a linear isomorphism $\phi\colon \setspan K_1 \to \setspan K_2$ such that $\phi(K_1) = K_2$.
For $x\in V$, we write $x\otimes x$ for the tensor product of $x$ with itself, which can be regarded as the linear transformation on $V$ that maps $y$ to $(x\bullet y)x$.
In this paper, the notation $\bm{X} \otimes \bm{X}$ is used for a matrix $\bm{X}$, but this does not denote the Kronecker product.
The space of self-adjoint linear transformations on $V$ is denoted by $\calS(V)$, and $\langle \cdot,\cdot\rangle$ denotes the inner product on $\calS(V)$ induced by that on $V$.
When $V = \bbR^n$, we identify $\calS(\bbR^n)$ with $\calS^n$, where the inner product $\langle \cdot,\cdot\rangle$ on $\calS^n$ is given by $\langle \bm{X},\bm{Y}\rangle = \sum_{i,j=1}^n X_{ij}Y_{ij}$ for $\bm{X},\bm{Y}\in \calS^n$.
We define $H_m(V)$ as the space of homogeneous polynomials of degree $m$ with real coefficients on $V$.
We note that elements in $H_m(\bbR^n)$ are homogeneous polynomials in the usual sense.
For $A\in \calS(V)$, we define $q_A\in H_2(V)$ as
\begin{equation}
q_A(x) \coloneqq x\bullet A(x) \label{eq:def_qA}
\end{equation}
for $x\in V$.

For functions $f_1,\dots,f_k$, we write $f_1\circ \dots \circ f_k$ for the composite function defined recursively as $(f_1\circ \dots \circ f_k)(x) \coloneqq (f_1\circ \dots \circ f_{k-1})(f_k(x))$.
We also use the symbol $\circ$ to denote the product of EJAs defined in \cref{subsec:symcone}.
For a linear mapping $f$ from a finite-dimensional real inner product space to another, $f^*$ denotes its adjoint. 

\subsection{Faces and their exposedness}
Let $K$ be a closed convex cone in a finite-dimensional real inner product space $V$.
A \emph{face} of $K$ is a nonempty (necessarily closed) convex subcone $F$ such that for any $a,b\in K$, we have $a,b\in F$ if $a + b \in F$.
We say that a nonzero $x\in K$ generates an \emph{extreme ray} of $K$ if $\bbR_+ x$ is a face of $K$.
An \emph{exposed face} of $K$ is a set that can be written as the intersection of $K$ and $\{d\}^\perp$ for some $d \in K^*$.
An exposed face is necessarily a face as the name suggests.
We call $K$ \emph{facially exposed} if all the faces of $K$ are exposed.

One way to show that a closed convex cone is not facially exposed is to exhibit one of its faces that is not facially exposed as a cone.
This argument is justified by the following theorem, which follows immediately from the discussion in \cite[page~163]{Roc1970}.

\begin{theorem}\label{thm:nonexposed}
Let $K$ be a closed convex cone in $V$, let $F_1$ be a face of $K$, and let $F_2$ be a nonempty convex subcone of $F_1$.
\begin{enumerate}[label=(\roman*), ref=\roman*]
\item $F_2$ is a face of $K$ if and only if $F_2$ is a face of $F_1$. \label{enum:face_iff}
\item $F_2$ is a non-exposed face of $K$ if $F_2$ is a non-exposed face of $F_1$.
\end{enumerate}
In particular, $K$ is not facially exposed if $F_1$ is not facially exposed as a cone.
\end{theorem}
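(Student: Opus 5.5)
The plan is to work directly from the definition of a face, and for (ii) from the definition of an exposed face. I will treat (i) first, then (ii), and read off the final sentence as a consequence.

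For (i), suppose first that $F_2$ is a face of $K$. Let $a,b\in F_1$ with $a+b\in F_2$. Since $F_1\subseteq K$, both $a$ and $b$ lie in $K$, and the face property of $F_2$ in $K$ gives $a,b\in F_2$. So $F_2$ is a face of $F_1$.

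Conversely, suppose $F_2$ is a face of $F_1$. Let $a,b\in K$ with $a+b\in F_2$. Then $a+b\in F_1$, and since $F_1$ is a face of $K$, we get $a,b\in F_1$. The face property of $F_2$ in $F_1$ then gives $a,b\in F_2$. In both directions, closedness and convexity of $F_2$ are part of the hypotheses or follow automatically.

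For (ii), assume $F_2$ is a face of $F_1$ that is not exposed in $F_1$. By (i), $F_2$ is a face of $K$, so only non-exposedness in $K$ remains to be shown. I argue by contraposition: suppose $F_2=K\cap\{d\}^\perp$ for some $d\in K^*$. Then $d\bullet x\ge 0$ for all $x\in F_1$. Hence $d$ lies in the dual of $F_1$, taken in $V$, or after projecting $d$ onto $\setspan F_1$ if duals are taken inside that span; the projection leaves inner products with elements of $F_1$ unchanged. Intersecting with $F_1$ gives
\[
F_1\cap\{d\}^\perp = F_1\cap K\cap \{d\}^\perp = F_1\cap F_2 = F_2 .
\]
So $F_2$ would be exposed in $F_1$, which is a contradiction.

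For the final assertion, if $F_1$ is not facially exposed as a cone, it has a non-exposed face $F_2$, which is a nonempty closed convex subcone of $F_1$. By (ii), $F_2$ is a non-exposed face of $K$, so $K$ is not facially exposed.

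I expect no real obstacle here. The only point needing care is whether duality and exposedness for $F_1$ are taken in the ambient space $V$ or in $\setspan F_1$, and the projection remark above handles either convention.
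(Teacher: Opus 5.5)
Your proof is correct. The definition-chasing for (i) is sound. For (ii), the observation that $d\in K^*$ already lies in $F_1^*$ and gives $F_1\cap\{d\}^\perp=F_2$ is valid, and the final claim follows as you say. The paper gives no proof of its own beyond pointing to the discussion in \cite[page~163]{Roc1970}, and your argument is exactly the elementary verification that this citation stands for. Your projection remark is harmless but not needed, since the paper takes duals in the ambient space $V$.
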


In addition, we investigate the facial exposedness of a closed convex cone via another cone that is linearly isomorphic to the cone under consideration.
It is well known that facial exposedness is invariant under linear isomorphism.
We summarize this in the following theorem and provide its concise proof because it is crucial for the discussion in this paper.

\begin{theorem}\label{thm:isom_face_exposed}
For each $i \in \{1,2\}$, let $K_i$ be a closed convex cone in a finite-dimensional inner product space.
Assume that $K_1$ is linearly isomorphic to $K_2$, and let $\phi\colon \setspan K_1 \to \setspan K_2$ be a linear isomorphism satisfying $\phi(K_1) = K_2$.
Let $F$ be a nonempty convex subcone of $K_1$.
\begin{enumerate}[label=(\roman*), ref=\roman*]
\item $F$ is a face of $K_1$ if and only if $\phi(F)$ is a face of $K_2$. \label{enum:iff_face}
\item $F$ is an exposed face of $K_1$ if and only if $\phi(F)$ is an exposed face of $K_2$. \label{enum:iff_exposed}
\end{enumerate}
In particular, $K_1$ is facially exposed if and only if so is $K_2$.
\end{theorem}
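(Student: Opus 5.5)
Write $V_i \coloneqq \setspan K_i$. Since $\phi$ is a bijection $V_1\to V_2$ with $\phi(K_1)=K_2$, and its inverse $\phi^{-1}\colon V_2\to V_1$ is also a linear isomorphism with $\phi^{-1}(K_2)=K_1$, it suffices to prove the ``only if'' direction in each of (\ref{enum:iff_face}) and (\ref{enum:iff_exposed}). The converse then follows by applying the same argument to $\phi^{-1}$ and $\phi(F)$. The final assertion follows from (\ref{enum:iff_face}) and (\ref{enum:iff_exposed}), because $F\mapsto \phi(F)$ is a bijection between the nonempty convex subcones of $K_1$ and those of $K_2$.

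For (\ref{enum:iff_face}), I check the definition directly. $\phi(F)$ is a nonempty convex subcone of $K_2$ because $\phi$ is linear. Let $a,b\in K_2$ with $a+b\in\phi(F)$. Then $\phi^{-1}(a),\phi^{-1}(b)\in K_1$, and $\phi^{-1}(a)+\phi^{-1}(b)=\phi^{-1}(a+b)\in F$ by injectivity. Since $F$ is a face, $\phi^{-1}(a),\phi^{-1}(b)\in F$, and hence $a,b\in\phi(F)$.

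For (\ref{enum:iff_exposed}), the main point is to transport the exposing vector. The ambient inner product spaces may be larger than $V_i$, and $\phi$ is defined only on $V_1$, so I handle this with orthogonal projections. Suppose $F=K_1\cap\{d\}^\perp$ with $d\in K_1^*$. Let $d'$ be the orthogonal projection of $d$ onto $V_1$. Then $d'\bullet x=d\bullet x$ for all $x\in V_1$, so we may assume $d\in V_1$.

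Next consider the linear functional $y\mapsto d\bullet\phi^{-1}(y)$ on $V_2$. By the Riesz representation on $V_2$, it equals $y\mapsto e\bullet y$ for a unique $e\in V_2$; concretely, $e=(\phi^{-1})^*(d)$ with the adjoint taken between $V_2$ and $V_1$. Extend this to the whole ambient space by the same formula $e\bullet y$. For $y\in K_2$ we have $e\bullet y=d\bullet\phi^{-1}(y)\ge 0$ since $\phi^{-1}(y)\in K_1$, so $e\in K_2^*$.

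Finally, let $y\in K_2$. Then $e\bullet y=0$ iff $\phi^{-1}(y)\in K_1\cap\{d\}^\perp=F$, iff $y\in\phi(F)$. Hence $\phi(F)=K_2\cap\{e\}^\perp$, which shows that $\phi(F)$ is exposed.
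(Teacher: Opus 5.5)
Your proof is correct and follows the paper's argument. Part (i) is checked directly from the definition of a face. Part (ii) transports the exposing vector $d$ to $(\phi^{-1})^*(d) = (\phi^*)^{-1}(d)$, exactly as the paper does; your projection of $d$ onto $\setspan K_1$ only makes explicit a detail the paper leaves implicit, namely that the adjoint is taken between the spans.
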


\begin{proof}
We can verify \eqref{enum:iff_face} from the definition of faces.
We can deduce \eqref{enum:iff_exposed} from the fact that if $F = K_1 \cap \{d\}^\perp$ for some $d\in K_1^*$, then we have $(\phi^*)^{-1}(d) \in K_2^*$ and $\phi(F) = K_2 \cap \{(\phi^*)^{-1}(d)\}^\perp$.
\end{proof}

\emph{Spectrahedrality} of a closed convex cone is a sufficient condition for the facial exposedness of the cone~\cite[Corollary~1]{RG1995}.
A closed convex cone $K$ in $V$ is \emph{spectrahedral} if there exist $d\ge 1$ and an affine mapping $\bm{A}\colon V\to \calS^d$ such that $K = \{x\in V \mid \bm{A}(x) \in \calS_+^d\}$.
For later use, we summarize this fact as the following theorem.

\begin{theorem}\label{thm:spec_implies_exposed}
If $K$ is a spectrahedral cone in $V$, then it is facially exposed.
\end{theorem}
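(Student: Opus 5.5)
The plan is to prove that a spectrahedral cone is facially exposed directly from the linear matrix inequality. Write $K = \{x\in V \mid \bm{A}(x)\in\calS_+^d\}$ with $\bm{A}(x) = \bm{A}_0 + L(x)$, where $L$ is linear. Since $K$ is a cone, $0\in K$ gives $\bm{A}_0\in\calS_+^d$. Moreover, for $x\in K$ and $t>0$ we have $\bm{A}_0/t + L(x)\in\calS_+^d$; letting $t\to\infty$ shows $L(x)\in\calS_+^d$. Conversely, if $L(x)\in\calS_+^d$, then $\bm{A}(x)\in\calS_+^d$. Hence $K = \{x \mid L(x)\in\calS_+^d\}$, so we may assume $\bm{A}$ is linear. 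The idea is then to transfer the known facial exposedness of $\calS_+^d$ to $K$ through $L$.

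The first step is to use the structure of faces of $\calS_+^d$. Every face has the form $\{\bm{X}\in\calS_+^d \mid \Ker(\bm{X})\supseteq W\}$ for a subspace $W$. It is therefore exposed by $\bm{P}_W$, the orthogonal projector onto $W$, since for $\bm{X}\succeq \bm{O}$ we have $\langle \bm{P}_W,\bm{X}\rangle = 0$ if and only if $\bm{X}W = 0$.

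Next, let $F$ be a face of $K$ and pick $\bar x$ in its relative interior. Let $G$ be the smallest face of $\calS_+^d$ containing $L(\bar x)$, namely $G = \{\bm{X}\succeq \bm{O} \mid \Ker(\bm{X})\supseteq \Ker(L(\bar x))\}$, which is exposed by $\bm{D}\coloneqq \bm{P}_{\Ker(L(\bar x))}$. I will show that $F = L^{-1}(G)\cap K$. One inclusion holds because $L^{-1}(G)\cap K$ is a face of $K$: it is the preimage of a face under a linear map, intersected with $K$, and the face property is checked directly. This face contains $\bar x$, so it contains $F$, since $F$ is the smallest face containing $\bar x$.

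The reverse inclusion is the main obstacle. Take $y\in K$ with $L(y)\in G$. Then $\Ker(L(y))\supseteq\Ker(L(\bar x))$, and $L(\bar x)$ is positive definite on the complement of its kernel. Hence for small $\varepsilon>0$ we get $L(\bar x)-\varepsilon L(y)\succeq \bm{O}$, that is, $\bar x-\varepsilon y\in K$. Then $\bar x = \varepsilon y + (\bar x - \varepsilon y)$ is a decomposition into elements of $K$. Because $\bar x\in F$ and $F$ is a face, $y\in F$.

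Finally, set $d\coloneqq L^*(\bm{D})$. For $x\in K$ we have $d\bullet x = \langle \bm{D}, L(x)\rangle\ge 0$, so $d\in K^*$. Also $d\bullet x=0$ if and only if $L(x)\in G$. Therefore $F = K\cap\{d\}^\perp$, which shows that $F$ is exposed.
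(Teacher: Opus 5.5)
Your proof is correct. It differs from the paper mainly in that the paper gives no argument of its own: it simply invokes \cite[Corollary~1]{RG1995}.

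What you wrote is a self-contained version of the standard Ramana--Goldman reasoning. The face of $K=\{x \mid L(x)\in\calS_+^d\}$ generated by a relative interior point $\bar x$ is $\{x\in K \mid \Ker(L(x))\supseteq \Ker(L(\bar x))\}$. This route buys two things.

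First, it works directly with the paper's conic notion of exposedness. There, the exposing element must lie in $K^*$ and the hyperplane $\{d\}^\perp$ passes through the origin. Your preliminary reduction from the affine map $\bm{A}$ to its linear part $L$ is exactly what makes $d=L^*(\bm{P}_{\Ker(L(\bar x))})$ a linear functional in $K^*$. By contrast, a result about general spectrahedra exposes faces by affine supporting hyperplanes. Citing it therefore tacitly needs the routine observation that every nonempty face of a cone contains $0$, which forces the supporting hyperplane to be linear.

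Second, your argument produces the exposing vector explicitly.

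The only fact you use without proof is that a face of $K$ containing a relative interior point of $F$ contains all of $F$. This is standard (e.g., \cite[Theorem~18.1]{Roc1970}).
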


\subsection{Euclidean Jordan algebras and symmetric cones}\label{subsec:symcone}
A \emph{Jordan algebra} is a pair $(E,\circ)$ where $E$ is a finite-dimensional real vector space and $\circ\colon E \times E \to E$ is a bilinear product that is commutative and satisfies $x\circ((x\circ x)\circ y) = (x\circ x)\circ (x\circ y)$ for all $x,y \in E$.
A Jordan algebra $(E,\circ)$ with a unit element is said to be \emph{Euclidean} if it admits an inner product $\bullet \colon E \times E \to \bbR$ that is associative in the sense that $(x\circ y)\bullet z = x\bullet (y\circ z)$ for all $x,y,z\in E$.
In this paper, we may write an EJA as $E$ for simplicity or as $(E,\circ,\bullet)$ to specify an underlying associative inner product on $E$.
For two EJAs $(E_1,\circ_1)$ and $(E_2,\circ_2)$, we say that $E_1$ is \emph{Jordan-isomorphic} to $E_2$ if there exists a linear isomorphism $\phi\colon E_1 \to E_2$ such that $\phi(x\circ_1 y) = \phi(x) \circ_2 \phi(y)$ for all $x,y\in E_1$, and we call such a mapping $\phi$ a \emph{Jordan isomorphism}.

Let $(E,\circ)$ be an EJA.
An element $c \in E$ is called an \emph{idempotent} if $c\circ c = c$.
For $c_1,\dots,c_r\in E$, we call $\{c_1,\dots,c_r\}$ a \emph{Jordan frame} of $E$ if every $c_i$ for $i\in \{1,\dots,r\}$ is a nonzero idempotent that cannot be written as the sum of two nonzero idempotents in $E$, the sum of $c_1,\dots,c_r$ is a unit element of $E$, and $i\neq j$ implies $c_i\circ c_j = 0$.
The cardinality $r$ of a Jordan frame does not depend on the choice of the Jordan frame; this integer is called the \emph{rank} of $E$.

For every EJA $E$, the set $E_+ \coloneqq \{x\circ x \mid x\in E\}$ forms a \emph{symmetric cone}, namely, a self-dual and homogeneous cone~\cite[Theorem~\RNum{3}.2.1]{FK1994}.
Conversely, any symmetric cone can be represented as $E_+$ for some EJA $E$, and the EJA is unique up to Jordan isomorphism~\cite[Chapter~\RNum{3}]{FK1994}.
By the invariance of the rank of EJAs under Jordan isomorphism, we see that for a given symmetric cone, the algebraically defined rank of a corresponding EJA is uniquely determined.
Indeed, the rank is completely determined by the geometry of the symmetric cone, and is computed as the maximum number of mutually orthogonal extreme rays of the symmetric cone~\cite[section~2.2]{Nis2026}.
Therefore, it makes sense to classify the facial exposedness of completely positive cones over symmetric cones in terms of the rank of the associated EJAs.

Here we display some examples of EJAs.
First, let $E \coloneqq \bbR^n$ with $n\ge 1$ and define
\begin{equation*}
\bm{x} \circ \bm{y} \coloneqq (x_1y_1,\dots,x_ny_n)^\top
\quad\text{and}\quad
\bm{x} \bullet \bm{y} \coloneqq \bm{x}^\top\bm{y},
\end{equation*}
respectively, for $\bm{x},\bm{y} \in \bbR^n$.
Then $(E,\circ,\bullet)$ forms an EJA, and we call it a \emph{Hadamard EJA}.
The rank of this EJA is $n$, and the associated symmetric cone is the nonnegative orthant $\bbR_+^n$, i.e., the cone of nonnegative vectors in $\bbR^n$.
Second, let $E \coloneqq \bbR^n$ with $n\ge 2$ and define
\begin{equation*}
\bm{x} \circ \bm{y} \coloneqq \begin{pmatrix}
\bm{x}^\top\bm{y}\\
x_1\bm{y}_{2:n} + y_1\bm{x}_{2:n}
\end{pmatrix}
\quad\text{and}\quad
\bm{x} \bullet \bm{y} \coloneqq \bm{x}^\top\bm{y},
\end{equation*}
respectively, for $\bm{x},\bm{y} \in \bbR^n$.
We call the EJA defined in this way a \emph{Jordan spin algebra}, and it is denoted by $L^n$.
The rank of this EJA is $2$, and the associated symmetric cone is the second-order cone defined as
\begin{equation*}
L_+^n \coloneqq \left\{\bm{x}\in \bbR^n \relmiddle| x_1 \ge \sqrt{x_2^2 + \cdots + x_n^2}\right\}.
\end{equation*}
We note that $L^n$ itself may be used to denote the second-order cone~\cite{BN2001,NL2026}, but in this paper, following \cite{GST2004,Orl2025_JorAut} for example, this symbol is used to represent a specific EJA associated with the cone, i.e., the Jordan spin algebra.
Third, let $\bbC$, $\bbH$, and $\bbO$ denote the sets of complex numbers, quaternions, and octonions, respectively.
For $\bbA \in \{\bbR,\bbC,\bbH,\bbO\}$ and $n\ge 1$, we define $\calH^n(\bbA)$ as the space of Hermitian $n\times n$ matrices with entries in $\bbA$.
In addition, we define
\begin{equation*}
\bm{X} \circ \bm{Y} \coloneqq \frac{\bm{X}\bm{Y} + \bm{Y}\bm{X}}{2} 
\quad\text{and}\quad
\bm{X} \bullet \bm{Y} \coloneqq \tr(\bm{X}\circ \bm{Y}),
\end{equation*}
respectively, for $\bm{X},\bm{Y} \in \calH^n(\bbA)$.
Then $(\calH^n(\bbA),\circ,\bullet)$ forms an EJA for any $n$ if $\bbA \in \{\bbR,\bbC,\bbH\}$ or for $n\le 3$ if $\bbA = \bbO$, and this EJA has rank $n$~\cite[Chapter~\RNum{5}]{FK1994}.
In particular, $\calH^n(\bbR) = \calS^n$ and the associated symmetric cone is $\calS_+^n$.

Again, let $(E,\circ)$ be an EJA.
We refer to $E'$ as a \emph{subalgebra} of $E$ if it is a subspace of $E$ and closed under the product $\circ$.
For every subalgebra $E'$ of $E$, the orthogonal projection (with respect to any fixed associative inner product) of the unit element of $E$ onto $E'$ is a unit element of $E'$, and hence $E'$ is an EJA.
Two subalgebras $E_1$ and $E_2$ of $E$ are called \emph{orthogonal} if $x \circ y  = 0$ for all  $x\in E_1$ and $y\in E_2$.
The EJA $E$ is called \emph{simple} if it is nonzero and cannot be written as the direct sum of two nonzero orthogonal subalgebras.
By \cite[Proposition~\RNum{3}.4.4]{FK1994}, every nonzero EJA $E$ can be decomposed into the direct sum of simple subalgebras $E_1,\dots,E_k$ that are pairwise orthogonal.
We note that the rank of $E$ is the sum of the ranks of $E_1,\dots,E_k$ since the union of a Jordan frame of each $E_i$ for $i \in \{1,\dots,k\}$ forms a Jordan frame of $E$.
The classification of simple EJAs is well known~\cite[Chapter~\RNum{5}]{FK1994}.
The following theorem summarizes the relevant classification results for EJAs of ranks $1$, $2$, and $3$ in a form convenient for our purposes.

\begin{theorem}\label{thm:EJA_isom}
Let $E$ be a simple EJA.
\begin{enumerate}[label=(\roman*), ref=\roman*]
\item If $E$ has rank $1$, then it is Jordan-isomorphic to the Hadamard EJA $\bbR$. \label{enum:EJA_isom_rank1}
\item If $E$ has rank $2$, then there exists $n\ge 3$ such that the algebra is Jordan-isomorphic to the Jordan spin algebra $L^n$. \label{enum:EJA_isom_rank2}
\item If $E$ has rank $3$, then it is Jordan-isomorphic to $\calH^3(\bbA)$ for some $\bbA \in \{\bbR,\bbC,\bbH,\bbO\}$.\label{enum:EJA_isom_rank3_simple}
\end{enumerate}
\end{theorem}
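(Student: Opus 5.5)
This theorem is essentially a repackaging of the classification of simple EJAs in \cite[Chapter~\RNum{5}]{FK1994}, so the plan is to invoke that classification and read off the rank in each case. By the classification, every simple EJA is Jordan-isomorphic to exactly one of the following:
\begin{itemize}
\item the Jordan spin algebras $L^n$ with $n\ge 3$, which have rank $2$;
\item $\calH^n(\bbA)$ with $\bbA\in\{\bbR,\bbC,\bbH\}$ and $n\ge 3$, which have rank $n$;
\item $\calH^3(\bbO)$, which has rank $3$;
\item the one-dimensional algebra $\bbR$, which has rank $1$.
\end{itemize}
The small cases of the matrix families need no separate entries. Indeed, $\calH^1(\bbA)\cong\bbR$. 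Also, $\calH^2(\bbA)$ is Jordan-isomorphic to $L^{d+2}$ with $d=\dim_{\bbR}\bbA\in\{1,2,4,8\}$, hence to $L^3,L^4,L^6,L^{10}$. Since the rank is invariant under Jordan isomorphism, we can sort this list by rank.

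\textbf{Rank $1$.} We argue directly. Let $\{c\}$ be a Jordan frame, so $c$ is the unit element and is a primitive idempotent. By the spectral theorem \cite[Theorem~\RNum{3}.1.2]{FK1994}, every $x\in E$ can be written as $x=\lambda c$. Hence $E=\bbR c$, and $\lambda c\mapsto\lambda$ is a Jordan isomorphism onto the Hadamard EJA $\bbR$. This argument does not use simplicity.

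\textbf{Rank $2$.} From the list, the rank-$2$ simple algebras are the $L^n$ with $n\ge3$ and the $\calH^2(\bbA)$, and the latter are again spin factors $L^{d+2}$. The only care needed is the bound $n\ge3$. The algebra $L^2$ has rank $2$ but is not simple, since it splits as $\bbR\oplus\bbR$ via the idempotents $\tfrac12(1,\pm1)^\top$. It is therefore excluded automatically.

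\textbf{Rank $3$.} In the list, the rank-$3$ entries are exactly $\calH^3(\bbA)$ for $\bbA\in\{\bbR,\bbC,\bbH,\bbO\}$. No spin factor has rank $3$, and no other matrix algebra has rank $3$.

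The main (modest) obstacle is the bookkeeping of the low-dimensional coincidences, namely that $\calH^2(\bbA)$ is a spin factor and that $L^2$ is excluded because it is not simple. Both are standard facts from \cite[Chapter~\RNum{5}]{FK1994}, so the proof reduces to citing that classification and checking ranks.
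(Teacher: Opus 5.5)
Your proposal is correct and follows essentially the same route as the paper. For rank $1$ you show directly that $E=\bbR e$; you justify this via the spectral theorem, which the paper leaves implicit. For ranks $2$ and $3$ you appeal to the Faraut--Kor\'anyi classification. The only difference is that the paper cites the specific results there (Corollary IV.1.5 for rank $2$ and Theorem V.3.7 for rank at least $3$), whereas you read the ranks off the full list and check the low-dimensional coincidences $\calH^2(\bbA)\cong L^{d+2}$ and the non-simplicity of $L^2$ by hand.
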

\begin{proof}
If $E$ has rank $1$, we have $E = \bbR e$, where $e$ is the unit element of $E$.
The linear mapping sending $e$ to $1$ is a Jordan isomorphism between $E$ and the Hadamard EJA $\bbR$.
Therefore, we obtain \eqref{enum:EJA_isom_rank1}.
\eqref{enum:EJA_isom_rank2} is a consequence of \cite[Corollary~\RNum{4}.1.5]{FK1994}.
\eqref{enum:EJA_isom_rank3_simple} follows directly from \cite[Theorem~\RNum{5}.3.7]{FK1994}.
\end{proof}

\subsection{Completely positive and copositive cones}
Let $K$ be a closed cone in a finite-dimensional real inner product space $V$.
Recall the definition of $\CP(K)$ provided in \eqref{eq:CPcone}.
Its dual is the \emph{copositive cone over $K$} defined as
\begin{equation*}
\COP(K) \coloneqq \{A \in \calS(V) \mid  q_A(x) \ge 0 \text{ for all $x\in K$}\},
\end{equation*}
where $q_A$ is defined in \eqref{eq:def_qA}.
See \cite[section~2]{SZ2003} for their duality.

First, we observe that a linear isomorphism between two underlying cones induces a linear isomorphism between the associated completely positive cones and preserves facial exposedness.

\begin{lemma}\label{lem:CP_isom}
For each $i \in \{1,2\}$, let $K_i$ be a closed cone in a finite-dimensional real inner product space.
We assume that $K_1$ is linearly isomorphic to $K_2$.
Then $\CP(K_1)$ is linearly isomorphic to $\CP(K_2)$.
In particular, $\CP(K_1)$ is facially exposed if and only if so is $\CP(K_2)$.
\end{lemma}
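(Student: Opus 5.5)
Let $V_i$ denote the ambient space of $K_i$, and let $\phi\colon \setspan K_1\to\setspan K_2$ be a linear isomorphism with $\phi(K_1)=K_2$. The plan is to build from $\phi$ an explicit linear isomorphism $\Phi\colon \setspan \CP(K_1)\to \setspan\CP(K_2)$ with $\Phi(\CP(K_1))=\CP(K_2)$. The "in particular" part then follows from \Cref{thm:isom_face_exposed}, since $\CP(K_i)$ is a closed convex cone. The natural candidate is the congruence map $A\mapsto \phi\circ A\circ\phi^*$, which sends $a\otimes a$ to $\phi(a)\otimes\phi(a)$.

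The one technical issue is that $\phi$ is defined only on the subspace $W_1\coloneqq\setspan K_1$, not on all of $V_1$. First I note that every $a\otimes a$ with $a\in K_1$ lies in $\calS(W_1)$, viewed inside $\calS(V_1)$ as those self-adjoint maps that vanish on $W_1^\perp$ and have range in $W_1$. Concretely, $a\otimes a$ maps $y$ to $(a\bullet y)a$.

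To avoid adjoints with respect to restricted inner products, I would define $\Phi$ on rank-one generators directly. Choose a basis $w_1,\dots,w_m$ of $W_1$. Every element of $\calS(W_1)$ is a linear combination of the symmetric tensors $w_j\otimes w_k+w_k\otimes w_j$ (and $w_j\otimes w_j$). Define $\Phi$ as the linear map sending $u\otimes v+v\otimes u$ to $\phi(u)\otimes\phi(v)+\phi(v)\otimes\phi(u)$. This is well defined because it is induced by the bilinear symmetric map $(u,v)\mapsto \phi(u)\otimes\phi(v)+\phi(v)\otimes\phi(u)$, which factors through the symmetric square $\mathrm{Sym}^2(W_1)\cong \calS(W_1)$. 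Equivalently, $\Phi=\mathrm{Sym}^2(\phi)$.

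The inverse is $\mathrm{Sym}^2(\phi^{-1})$, so $\Phi$ is a linear isomorphism $\calS(W_1)\to\calS(W_2)$, where $W_2\coloneqq\setspan K_2$. By construction $\Phi(a\otimes a)=\phi(a)\otimes\phi(a)$. Since $\phi$ maps $K_1$ onto $K_2$, this gives $\Phi(\CP(K_1))=\CP(K_2)$ directly from \eqref{eq:CPcone}.

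It remains to pass to the spans of the cones, because the definition of linear isomorphism uses $\setspan\CP(K_i)$. Since $\CP(K_1)\subseteq\calS(W_1)$, the map $\Phi$ restricts to an injective linear map on $\setspan\CP(K_1)$. Its image is $\setspan\Phi(\CP(K_1))=\setspan\CP(K_2)$. So the restriction is the desired isomorphism.

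The main obstacle, which is mild, is verifying that $\Phi$ is well defined on $\calS(W_1)$, given that $\phi$ is not defined on all of $V_1$. I would handle this through the identification $\calS(W_1)\cong\mathrm{Sym}^2(W_1)$, using the fact that $\{u\otimes u : u\in W_1\}$ spans $\calS(W_1)$ by polarization. If a more concrete argument is preferred, one can pick inner-product bases and write $\Phi(X)=\bm{P}X\bm{P}^\top$ in coordinates, where $\bm{P}$ is the matrix of $\phi$.
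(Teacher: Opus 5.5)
Your proposal is correct and follows the paper's proof: the paper uses the congruence $\Phi(A)=\phi\circ A\circ\phi^*$, which agrees with your $\mathrm{Sym}^2(\phi)$ because both send $a\otimes a$ to $\phi(a)\otimes\phi(a)$. The paper then inverts it via $B\mapsto\phi^{-1}\circ B\circ(\phi^*)^{-1}$, deduces $\Phi(\CP(K_1))=\CP(K_2)$, and invokes \cref{thm:isom_face_exposed}. The only difference is cosmetic: you define the map on all of $\calS(\setspan K_1)$ and then restrict, whereas the paper defines it directly on $\setspan\CP(K_1)$. There it writes each element as a difference of two elements of $\CP(K_1)$ to see that the image lies in $\setspan\CP(K_2)$.
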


\begin{proof}
Let $\phi\colon\setspan K_1\to \setspan K_2$ be a linear isomorphism satisfying $\phi(K_1) = K_2$.
For any $A\in \CP(K_1)$, we have
\begin{equation}
\phi \circ A \circ \phi^* \in \CP(K_2) \label{eq:PhiA}
\end{equation}
since for any $a\in K_1$, it follows that $\phi\circ (a\otimes a)\circ \phi^* = \phi(a)\otimes \phi(a)$ and $\phi(a) \in K_2$.

We define $\Phi\colon \setspan\CP(K_1) \to \setspan\CP(K_2)$ as $\Phi(A) \coloneqq \phi \circ A \circ \phi^*$ for every $A\in \setspan\CP(K_1)$.
Since every element in $\setspan\CP(K_1)$ can be expressed as the difference of two elements in $\CP(K_1)$~\cite[Theorem~2.7]{Roc1970}, it follows from \eqref{eq:PhiA} that $\Phi$ is well-defined.
The inverse of $\Phi$ is given by $\Phi^{-1}(B) = \phi^{-1}\circ B \circ (\phi^*)^{-1}$ for $B\in \setspan\CP(K_2)$, and thus $\Phi$ is a linear isomorphism.
In addition, using \eqref{eq:PhiA}, we obtain $\Phi(\CP(K_1)) = \CP(K_2)$.
Therefore, $\CP(K_1)$ is linearly isomorphic to $\CP(K_2)$.
Finally, by \cref{thm:isom_face_exposed}, we obtain the equivalence of facial exposedness for the two cones.
\end{proof}

In particular, we see from \cref{lem:CP_isom} that for an EJA $E$, the facial exposedness of $\CP(E_+)$ is independent of the choice of an associative inner product on $E$.
This means that it is not necessary to specify the associative inner product, and at the same time, we can choose it arbitrarily to investigate facial exposedness in practical computations.
More generally, the following corollary states the invariance of the facial exposedness of $\CP(E_+)$ under Jordan isomorphism.

\begin{corollary}\label{cor:CP_isom_Jordan}
For each $i\in \{1,2\}$, let $(E_i,\circ_i)$ be an EJA.
We assume that $E_1$ is Jordan-isomorphic to $E_2$.
Then $\CP((E_1)_+)$ is linearly isomorphic to $\CP((E_2)_+)$.
In particular, $\CP((E_1)_+)$ is facially exposed if and only if so is $\CP((E_2)_+)$.
\end{corollary}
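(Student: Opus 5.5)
The plan is to reduce \cref{cor:CP_isom_Jordan} to \cref{lem:CP_isom}: it suffices to show that the symmetric cone $(E_1)_+$ is linearly isomorphic to $(E_2)_+$. Both claims of the corollary then follow at once, since \cref{lem:CP_isom} already provides the linear isomorphism of the completely positive cones and the equivalence of their facial exposedness.

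Let $\phi\colon E_1\to E_2$ be a Jordan isomorphism. The first step is to check that $\phi((E_1)_+) = (E_2)_+$. For $x\in E_1$ we have $\phi(x\circ_1 x) = \phi(x)\circ_2\phi(x)\in (E_2)_+$, so $\phi((E_1)_+)\subseteq (E_2)_+$. Conversely, any $y\circ_2 y\in (E_2)_+$ equals $\phi(x\circ_1 x)$ with $x = \phi^{-1}(y)$, which gives the reverse inclusion. Note that this uses only the multiplicativity of $\phi$ and its bijectivity; no inner product is needed.

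The second step is to make sure that the isomorphism in \cref{lem:CP_isom} has the right domain, namely $\setspan (E_i)_+$. Here we need $\setspan (E_i)_+ = E_i$. This holds because $(E_i)_+$ is a symmetric cone in $E_i$ and, being self-dual, has nonempty interior. More elementarily, every $x\in E_i$ satisfies
\[
x = \tfrac14\bigl((x+e)\circ(x+e) - (x-e)\circ(x-e)\bigr),
\]
where $e$ is the unit element. Hence the restriction of $\phi$ to $\setspan(E_1)_+ = E_1$ is a linear isomorphism onto $\setspan(E_2)_+ = E_2$ mapping $(E_1)_+$ onto $(E_2)_+$.

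The third step is to fix the inner-product setting. Each $E_i$ is a finite-dimensional real inner product space under any chosen associative inner product, and $(E_i)_+$ is closed, being a symmetric cone. Therefore the hypotheses of \cref{lem:CP_isom} hold, and the conclusion follows. No step is a real obstacle; the only point needing care is the span identification in the second step, which the displayed polarization identity settles.
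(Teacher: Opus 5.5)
Your proposal is correct and follows the paper's proof: a Jordan isomorphism maps $(E_1)_+$ onto $(E_2)_+$, so the two cones are linearly isomorphic, and \cref{lem:CP_isom} then gives both conclusions. Your span identification in the second step is correct but not needed, because the restriction of an injective linear map $\phi$ to $\setspan(E_1)_+$ is automatically an isomorphism onto $\setspan\phi((E_1)_+) = \setspan(E_2)_+$.
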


\begin{proof}
By assumption, the cone $(E_1)_+$ is linearly isomorphic to $(E_2)_+$.
Therefore, from \cref{lem:CP_isom}, we obtain the desired result.
\end{proof}

Second, we summarize results on exposed faces of completely positive cones.
The following result is quoted from \cite[Proposition~5.4]{NL2026} for later use.

\begin{theorem}\label{thm:CP_face_subsp}
Let $K$ be a closed cone in $V$ and $X$ be a subspace of $V$.
Then $\CP(K \cap X)$ is an exposed face of $\CP(K)$.
\end{theorem}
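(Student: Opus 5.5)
The statement is quoted from \cite[Proposition~5.4]{NL2026}, so the plan is to reprove it directly by writing down an explicit exposing element of $\COP(K)$. Let $P$ denote the orthogonal projection onto $X^\perp$; it lies in $\calS(V)$. I will take $D \coloneqq P$ as the candidate normal. First, $D\in\COP(K)$: for every $x\in V$ we have $q_D(x) = x\bullet P(x) = \|P(x)\|^2 \ge 0$, so this holds even without using the cone $K$. Second, for $a\in V$ we have $\langle D, a\otimes a\rangle = a\bullet D(a) = \|P(a)\|^2$. This vanishes exactly when $a\in X$.

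The main step is to show $\CP(K)\cap\{D\}^\perp = \CP(K\cap X)$. For the inclusion $\supseteq$, note that any element of $\CP(K\cap X)$ is a sum $\sum_i a_i\otimes a_i$ with $a_i\in K\cap X\subseteq K$. Such a sum lies in $\CP(K)$, and its inner product with $D$ is $\sum_i\|P(a_i)\|^2=0$. For the inclusion $\subseteq$, write $A=\sum_i a_i\otimes a_i\in\CP(K)$ with $\langle D,A\rangle = 0$. Then $\sum_i \|P(a_i)\|^2=0$. Each term is nonnegative, so every $P(a_i)=0$, i.e.\ $a_i\in X$, and hence $A\in\CP(K\cap X)$. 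Here I use that $\langle D, a\otimes a\rangle = q_D(a)$, which follows from the identification of $a\otimes a$ as the rank-one operator $y\mapsto (a\bullet y)a$ and the induced inner product on $\calS(V)$.

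Two small points remain. First, $K\cap X$ is a closed cone, so $\CP(K\cap X)$ is defined; it contains $0$ and is therefore nonempty. Second, an exposed face is by definition a face, as noted in the preliminaries, so no separate face argument is needed. I expect no real obstacle. The only point requiring care is the trace identity $\langle D, a\otimes a\rangle = a\bullet D(a)$ for the chosen inner product on $\calS(V)$, which is the standard fact that $\tr(D\circ(a\otimes a)) = a\bullet D(a)$.
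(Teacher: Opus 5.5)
Your proof is correct and complete. The paper gives no proof of this statement; it imports it directly from \cite[Proposition~5.4]{NL2026}. So there is no in-paper argument to match, and what you wrote is a self-contained replacement for the citation.

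It is also the route the paper's own tools suggest. Take $P$ to be the orthogonal projection onto $X^\perp$. Then $q_P(x)=\|P(x)\|^2\ge 0$ on all of $V$, so $P\in\COP(K)$ and $Z_K(P)=K\cap X$. Applying \cref{lem:CP_exposed_face} then gives $\CP(K)\cap\{P\}^\perp=\CP(K\cap X)$ at once. Your two inclusions are just the proof of that lemma written out for this particular $D$.

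Your choice of $P$ has two convenient features. It is positive semidefinite on all of $V$, and it does not depend on $K$. Consequently you only need the easy fact that such an element pairs nonnegatively with every element of $\CP(K)$, which follows from the trace identity you cite. The full duality $\CP(K)^*=\COP(K)$ is never used.
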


\Cref{thm:CP_face_subsp} yields a class of exposed faces of completely positive cones over symmetric cones as shown in the following lemma.

\begin{lemma}\label{lem:subalg_expface}
Let $E$ be an EJA and $E'$ be a subalgebra of $E$.
Then $\CP(E_+')$ is an exposed face of $\CP(E_+)$.
\end{lemma}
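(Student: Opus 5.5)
The plan is to apply \cref{thm:CP_face_subsp} with $K = E_+$ and $X = E'$. This yields that $\CP(E_+\cap E')$ is an exposed face of $\CP(E_+)$. The whole proof therefore reduces to the set identity
\begin{equation*}
E_+ \cap E' = E'_+,
\end{equation*}
where $E'_+ = \{x\circ x \mid x\in E'\}$ is the cone of squares of the subalgebra $E'$, which is itself an EJA by the discussion in \cref{subsec:symcone}.

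The inclusion $E'_+\subseteq E_+\cap E'$ is immediate. If $x\in E'$, then $x\circ x\in E'$ because $E'$ is closed under $\circ$. Also $x\circ x\in E_+$ by the definition of $E_+$.

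For the reverse inclusion, take $y\in E_+\cap E'$ and use the spectral decomposition of $y$ inside the EJA $E'$. Write $y=\sum_i \lambda_i c_i$, where $\{c_i\}$ is a Jordan frame of $E'$ and the $\lambda_i$ are real. Each $c_i$ is an idempotent of $E$ as well, since $E'$ is closed under the product $\circ$. Moreover $c_i\ne 0$ and $c_i\in E_+$, because $c_i = c_i\circ c_i$.

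The key step is to show $\lambda_i\ge 0$. Fix an associative inner product $\bullet$ on $E$; its restriction to $E'$ is associative for $E'$. Since $E_+$ is self-dual and $y, c_i \in E_+$, we have $y\bullet c_i\ge 0$. On the other hand, orthogonality of the frame gives $c_j\circ c_i=0$ for $j\ne i$, so
\begin{equation*}
c_j\bullet c_i=(c_j\circ c_i)\bullet e=0 \quad (j\ne i),
\end{equation*}
where $e$ is the unit of $E$. Hence $y\bullet c_i=\lambda_i\, c_i\bullet c_i$. Since $c_i\bullet c_i>0$, this forces $\lambda_i\ge0$.

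With all $\lambda_i\ge 0$, set $x \coloneqq \sum_i\sqrt{\lambda_i}\,c_i\in E'$. Then $x\circ x=y$, so $y\in E'_+$, which completes the identity and hence the proof.

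The only potential obstacle is the legitimacy of the spectral decomposition within $E'$. This is standard because $E'$ is an EJA: it has a unit, namely the projection of $e$ onto $E'$, and the restricted inner product is associative. It is worth noting that we only use Jordan-frame orthogonality and the self-duality of $E_+$, not any agreement between the unit of $E'$ and that of $E$.
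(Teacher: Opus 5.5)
Your proof is correct and follows the paper's route: apply \cref{thm:CP_face_subsp} with $K=E_+$ and $X=E'$ after establishing $E_+\cap E'=E_+'$ via the spectral decomposition in $E'$. The paper only cites this identity, and your argument (showing $c_i\in E_+$, using self-duality, and deriving $c_j\bullet c_i=(c_j\circ c_i)\bullet e=0$ to get $\lambda_i\ge 0$) correctly supplies the omitted details.
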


\begin{proof}
We note that $E_+ \cap E' = E_+'$.
We can derive this equality from \cite[equation~(9)]{NN2016}, or alternatively prove it using spectral decomposition~\cite[Theorem~\RNum{3}.1.2]{FK1994}.
Since $E_+'$ can be written as the intersection of the cone $E_+$ and the subspace $E'$, by \cref{thm:CP_face_subsp}, we obtain the desired result.
\end{proof}

The following lemma provides us with a way to describe exposed faces of completely positive cones in terms of zeros of quadratic forms.
Its proof is omitted because it follows directly from the definitions.
We note that Dickinson~\cite[page~393]{Dic2011} pointed out the same result in the case where $K$ is a nonnegative orthant.

\begin{lemma}\label{lem:CP_exposed_face}
Let $K$ be a closed cone in $V$ and $D\in \COP(K)$.
We define
\begin{equation}
Z_{K}(D) \coloneqq \{x\in K \mid q_D(x) = 0\}. \label{eq:def_ZKD}
\end{equation}
Then we have $\CP(K) \cap \{D\}^\perp = \CP(Z_{K}(D))$.
\end{lemma}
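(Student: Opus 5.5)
We prove the equality $\CP(K) \cap \{D\}^\perp = \CP(Z_{K}(D))$ by showing the two inclusions separately. The basic computation used in both directions is that for any $x\in V$ the inner product of $D$ with the rank-one element $x\otimes x$ is the quadratic form:
\begin{equation*}
\langle D, x\otimes x\rangle = x\bullet D(x) = q_D(x).
\end{equation*}
This holds because the induced inner product on $\calS(V)$ is the trace inner product, so $\langle D, x\otimes x\rangle = \tr(D\circ(x\otimes x))$, and the linear map $D\circ(x\otimes x)$ sends $y$ to $(x\bullet y)D(x)$, whose trace is $x\bullet D(x)$.

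For the inclusion $\supseteq$, take $A = \sum_{i=1}^k a_i\otimes a_i$ with every $a_i\in Z_K(D)$. Since $Z_K(D)\subseteq K$, we have $A\in\CP(K)$. By linearity and the identity above, $\langle D,A\rangle = \sum_i q_D(a_i) = 0$, so $A\in\{D\}^\perp$.

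For the inclusion $\subseteq$, take $A = \sum_{i=1}^k a_i\otimes a_i \in \CP(K)$ with $a_i\in K$ and $\langle D,A\rangle = 0$. Then $\sum_i q_D(a_i) = 0$. Because $D\in\COP(K)$ and each $a_i\in K$, every term $q_D(a_i)$ is nonnegative. A sum of nonnegative terms equal to zero forces each term to vanish, so $q_D(a_i)=0$ for every $i$. Hence each $a_i\in Z_K(D)$, and therefore $A\in\CP(Z_K(D))$.

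There is no real obstacle in this argument. The only point that needs care is the bookkeeping behind $\langle D, x\otimes x\rangle = q_D(x)$, namely checking that the inner product on $\calS(V)$ induced by $\bullet$ is the trace pairing, as computed above.
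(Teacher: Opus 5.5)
Your proof is correct and is the direct verification from the definitions that the paper has in mind when it omits the proof: the identity $\langle D, x\otimes x\rangle = q_D(x)$ gives the inclusion $\supseteq$ at once. For $\subseteq$, the nonnegativity of $q_D$ on $K$ forces every $a_i$ in a decomposition of an element of $\CP(K)\cap\{D\}^\perp$ into $Z_K(D)$.
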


For some specific symmetric cones, the facial exposedness (spectrahedrality) of the associated completely positive cones has been determined.
\begin{theorem}\label{thm:CPn_facially_exposed}
Let $n \ge 1$.
Then $\CP(\bbR_+^n)$ is facially exposed (spectrahedral) if and only if $n\le 4$.
\end{theorem}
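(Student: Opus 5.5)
The plan is to split at $n = 4$ and to handle both properties together, using \cref{thm:spec_implies_exposed}: a spectrahedral cone is facially exposed. So it suffices to show that $\CP(\bbR_+^n)$ is spectrahedral for $n\le 4$ and not facially exposed for $n\ge 5$. Not being facially exposed then also rules out spectrahedrality.

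For $n\le 4$, I would invoke the classical theorem of Diananda and Maxfield--Minc. It states that for $n\le 4$ a matrix is completely positive if and only if it is doubly nonnegative:
\begin{equation*}
\CP(\bbR_+^n) = \calS_+^n \cap \{\bm{X}\in\calS^n \mid X_{ij}\ge 0 \text{ for all } i,j\}.
\end{equation*}
This intersection is spectrahedral. Take the linear map $\bm{A}(\bm{X})$ given by the block diagonal matrix with the block $\bm{X}$ followed by the $1\times 1$ blocks $X_{ij}$ for $1\le i\le j\le n$. Then $\bm{A}(\bm{X})$ is positive semidefinite exactly when $\bm{X}$ is doubly nonnegative. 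By \cref{thm:spec_implies_exposed}, the cone is therefore facially exposed. For $n = 1$ the cone is just a ray, and the claim is trivial.

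For $n\ge 5$, I would first reduce to $n = 5$. The coordinate subspace $\bbR^5\times\{\bm{0}\}$ is a subalgebra of the Hadamard EJA $\bbR^n$. By \cref{lem:subalg_expface}, $\CP(\bbR_+^5)$, embedded in the upper-left block, is an exposed face of $\CP(\bbR_+^n)$. \Cref{thm:nonexposed} then shows that it is enough to prove that $\CP(\bbR_+^5)$ itself is not facially exposed.

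For $n = 5$, I would follow Zhang's construction~\cite{Zha2018}. The approach works through faces of exposed faces, as follows.
\begin{enumerate}[label=(\arabic*)]
\item Take the Horn matrix $\bm{H}\in\COP(\bbR_+^5)$.
\item By \cref{lem:CP_exposed_face}, the set $F_1 = \CP(Z_{\bbR_+^5}(\bm{H}))$ is an exposed face. The zero set $Z_{\bbR_+^5}(\bm{H})$ is explicitly known: it is the union of finitely many two-dimensional cones generated by cyclic patterns of the form $\bm{e}_i+\bm{e}_{i+1}$ and $\bm{e}_{i+1}+\bm{e}_{i+2}$, taken modulo $5$.
\item Inside $F_1$, pick a suitable subface $F_2$, for instance one generated by finitely many rank-one matrices $\bm{a}\bm{a}^\top$ with $\bm{a}$ in this zero set.
\item Show that every $\bm{D}\in\COP(\bbR_+^5)$ vanishing on $F_2$ must also vanish on some extra $\bm{a}\bm{a}^\top$ not in $F_2$. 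Hence $F_2$ is not of the form $\CP(\bbR_+^5)\cap\{\bm{D}\}^\perp$.
\end{enumerate}

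The main obstacle is this last step: one must characterize all copositive $\bm{D}$ with $q_{\bm{D}}$ vanishing on the chosen vectors. I would try the following. Each zero $\bm{a}$ of $q_{\bm{D}}$ on $\bbR_+^5$ forces first-order conditions, namely $(\bm{D}\bm{a})_i\ge 0$ for all $i$, with equality where $a_i>0$. These are linear equations and inequalities on the entries of $\bm{D}$. Solving them should show that every such $\bm{D}$ is a nonnegative multiple of a fixed matrix, in the spirit of the Horn matrix, and that this matrix acquires an additional zero.

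Since the statement is quoted from~\cite{Zha2018}, it is also legitimate simply to cite that result together with the Diananda characterization, presenting the argument above as the underlying reasoning.
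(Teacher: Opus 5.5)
Your proposal is correct and matches the paper's proof. For $n\le 4$ the paper likewise uses Maxfield--Minc to identify $\CP(\bbR_+^n)$ with the doubly nonnegative cone, which is spectrahedral and hence facially exposed by \cref{thm:spec_implies_exposed}; for $n\ge 5$ it simply cites Zhang~\cite{Zha2018} (which already covers every $n\ge 5$), so your reduction to $n=5$ via \cref{lem:subalg_expface} and your Horn-matrix outline are harmless extras, and since you do not carry out step~(4), that direction rests on the citation exactly as in the paper.
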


\begin{proof}
If $n\le 4$, it follows from \cite{MM1962} that $\CP(\bbR_+^n)$ is equal to the intersection of $\calS_+^n$ and the cone of entrywise nonnegative matrices in $\calS^n$.
This implies that $\CP(\bbR_+^n)$ is spectrahedral, and therefore facially exposed by \cref{thm:spec_implies_exposed}.
If $n\ge 5$, it follows from \cite{Zha2018} that $\CP(\bbR_+^n)$ is not facially exposed and thus not spectrahedral either.
\end{proof}
\begin{theorem}[{\cite[Theorem~1]{SZ2003}}]\label{thm:CPLn}
Let $n\ge 2$.
Then we have
\begin{equation*}
\CP(L_+^n) = \{\bm{X}\in \calS_+^n \mid \langle\bm{J}_n,\bm{X}\rangle \ge 0\}. \label{eq:CPLn}
\end{equation*}
In particular, $\CP(L_+^n)$ is facially exposed.
\end{theorem}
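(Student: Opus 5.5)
The plan is to prove the set equality by two inclusions and then get facial exposedness from spectrahedrality. Write $\mathcal{K} \coloneqq \{\bm{X}\in \calS_+^n \mid \langle\bm{J}_n,\bm{X}\rangle \ge 0\}$. Under the identification of $\calS(\bbR^n)$ with $\calS^n$, the element $\bm{a}\otimes\bm{a}$ is the matrix $\bm{a}\bm{a}^\top$, and $\langle \bm{J}_n,\bm{a}\bm{a}^\top\rangle = \bm{a}^\top\bm{J}_n\bm{a} = a_1^2 - \|\bm{a}_{2:n}\|^2$.

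\emph{Inclusion $\CP(L_+^n)\subseteq \mathcal{K}$.} For $\bm{a}\in L_+^n$, the matrix $\bm{a}\bm{a}^\top$ is positive semidefinite. Moreover $a_1^2 - \|\bm{a}_{2:n}\|^2 \ge 0$ because $a_1\ge \|\bm{a}_{2:n}\|\ge 0$. Both conditions defining $\mathcal{K}$ are preserved under finite sums, so the inclusion follows.

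\emph{Inclusion $\mathcal{K}\subseteq \CP(L_+^n)$.} This is the main step. First note that any $\bm{v}\in\bbR^n$ with $\bm{v}^\top\bm{J}_n\bm{v}\ge 0$ satisfies $|v_1|\ge\|\bm{v}_{2:n}\|$, so either $\bm{v}$ or $-\bm{v}$ lies in $L_+^n$. Since $\bm{v}\bm{v}^\top = (-\bm{v})(-\bm{v})^\top$, it therefore suffices to write $\bm{X}\in\mathcal{K}$ as $\sum_i \bm{v}_i\bm{v}_i^\top$ with every $\bm{v}_i^\top\bm{J}_n\bm{v}_i\ge 0$.

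Start from any rank-one decomposition $\bm{X}=\sum_{i=1}^k \bm{v}_i\bm{v}_i^\top$, for instance the spectral one. Suppose some $\bm{v}_i^\top\bm{J}_n\bm{v}_i<0$. Since $\sum_i \bm{v}_i^\top\bm{J}_n\bm{v}_i = \langle \bm{J}_n,\bm{X}\rangle\ge 0$, some $j$ has $\bm{v}_j^\top\bm{J}_n\bm{v}_j>0$. For $\theta\in[0,\pi/2]$ rotate the pair:
\begin{equation*}
\bm{u}_\theta = \cos\theta\,\bm{v}_i+\sin\theta\,\bm{v}_j, \qquad \bm{w}_\theta = -\sin\theta\,\bm{v}_i+\cos\theta\,\bm{v}_j .
\end{equation*}
Then $\bm{u}_\theta\bm{u}_\theta^\top+\bm{w}_\theta\bm{w}_\theta^\top = \bm{v}_i\bm{v}_i^\top+\bm{v}_j\bm{v}_j^\top$ for every $\theta$.

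The function $\theta\mapsto \bm{u}_\theta^\top\bm{J}_n\bm{u}_\theta$ is continuous, negative at $0$ and positive at $\pi/2$. By the intermediate value theorem it vanishes at some $\theta^\ast$. Fix this $\theta^\ast$.
\begin{itemize}
\item By the sum identity, $\bm{w}_{\theta^\ast}^\top\bm{J}_n\bm{w}_{\theta^\ast}$ equals $\bm{v}_i^\top\bm{J}_n\bm{v}_i+\bm{v}_j^\top\bm{J}_n\bm{v}_j$.
\item Replacing $\bm{v}_i,\bm{v}_j$ by $\bm{u}_{\theta^\ast},\bm{w}_{\theta^\ast}$ turns two vectors with nonzero $\bm{J}_n$-value into one with zero value and one other.
\item Hence the number of indices with nonzero $\bm{J}_n$-value strictly decreases.
\end{itemize}
Induct on this number. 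The process stops when no negative values remain, which gives the desired decomposition. The only delicate point is the bookkeeping of this induction, and that is the step I expect to need the most care.

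\emph{Facial exposedness.} The cone $\mathcal{K}$ is spectrahedral: it equals $\{\bm{X}\in\calS^n \mid \bm{A}(\bm{X})\in\calS_+^{n+1}\}$ for the linear map $\bm{A}(\bm{X})$ given by the block-diagonal matrix with blocks $\bm{X}$ and $\langle\bm{J}_n,\bm{X}\rangle$. Hence $\CP(L_+^n)$ is facially exposed by \cref{thm:spec_implies_exposed}.
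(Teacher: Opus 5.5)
Your proof is correct. The paper does not prove the set equality itself; it quotes it from Sturm and Zhang. It obtains facial exposedness exactly as you do, by reading the right-hand side as a spectrahedral description and invoking \cref{thm:spec_implies_exposed}.

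Your pairwise-rotation argument for $\{\bm{X}\in\calS_+^n \mid \langle\bm{J}_n,\bm{X}\rangle\ge 0\}\subseteq\CP(L_+^n)$ is essentially the Sturm--Zhang rank-one decomposition technique, made self-contained. They use the same kind of rotation to get a stronger, balanced decomposition $\bm{X}=\sum_{i=1}^r \bm{x}_i\bm{x}_i^\top$ with $\bm{x}_i^\top\bm{J}_n\bm{x}_i=\langle\bm{J}_n,\bm{X}\rangle/r$ for all $i$, where $r$ is the rank of $\bm{X}$. You only need sign control, which makes your induction simpler.

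The bookkeeping you flag as delicate is fine. The sum $\sum_i\bm{v}_i^\top\bm{J}_n\bm{v}_i=\langle\bm{J}_n,\bm{X}\rangle\ge 0$ is invariant under each rotation, so a positive term exists whenever a negative one does. Each step lowers the number of nonzero terms by at least one, so the procedure terminates with no negative terms.
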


\begin{theorem}[{\cite[Theorem~3]{SZ2003}}]\label{thm:CP_nno1soc}
Let $n\ge 2$.
Then we have
\begin{equation*}
\CP(\bbR_+\times L_+^n) = \{\bm{X} \in \calS_+^{n+1} \mid \langle \{0\}\oplus \bm{J}_n,\bm{X}\rangle \ge 0,\ (\bm{X}\bm{e}_1)_{2:n+1} \in L_+^n\}. \label{eq:rank3_12_rhs}
\end{equation*}
In particular, $\CP(\bbR_+\times L_+^n)$ is facially exposed.
\end{theorem}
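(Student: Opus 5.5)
Theorem~\ref{thm:CP_nno1soc} follows from the description of $\CP(\bbR_+\times L_+^n)$ as a spectrahedron; facial exposedness then comes from \cref{thm:spec_implies_exposed}. Let $R$ be the right-hand side. The proof has two inclusions.

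\textbf{Inclusion $\CP\subseteq R$.} Since $R$ is a convex cone, it suffices to check generators $\bm{a}\bm{a}^\top$ with $\bm{a}=(t,\bm{y})$, $t\ge0$, $\bm{y}\in L_+^n$. Such a matrix is positive semidefinite. Its inner product with $\{0\}\oplus\bm{J}_n$ equals $\bm{y}^\top\bm{J}_n\bm{y}\ge0$. Finally, $(\bm{a}\bm{a}^\top\bm{e}_1)_{2:n+1}=t\bm{y}\in L_+^n$.

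\textbf{Inclusion $R\subseteq\CP$.} Take $\bm{X}\in R$ and write
\[
\bm{X}=\begin{pmatrix}\alpha&\bm{b}^\top\\ \bm{b}&\bm{Y}\end{pmatrix},
\qquad \bm{b}\in L_+^n,\quad \langle\bm{J}_n,\bm{Y}\rangle\ge0 .
\]

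\emph{Case $\alpha=0$.} Positive semidefiniteness forces $\bm{b}=\bm{0}$. Then $\bm{Y}\in\CP(L_+^n)$ by \cref{thm:CPLn}, and padding with a zero entry gives $\bm{X}\in\CP(\bbR_+\times L_+^n)$.

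\emph{Case $\alpha>0$.} Peel off the rank-one piece
\[
\bm{a}\bm{a}^\top \quad\text{with}\quad \bm{a}=\Bigl(\sqrt{\alpha},\,\bm{b}/\sqrt{\alpha}\Bigr),
\]
which lies in $\CP$ because $\bm{b}\in L_+^n$. The remainder is $\{0\}\oplus\bm{S}$ with Schur complement $\bm{S}=\bm{Y}-\bm{b}\bm{b}^\top/\alpha\succeq0$.

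If $\langle \bm{J}_n,\bm{S}\rangle\ge0$, then $\bm{S}\in\CP(L_+^n)$ by \cref{thm:CPLn} and we are done. The difficulty is that
\[
\langle\bm{J}_n,\bm{S}\rangle=\langle \bm{J}_n,\bm{Y}\rangle-\bm{b}^\top\bm{J}_n\bm{b}/\alpha ,
\]
which may be negative. In that case, instead subtract $\lambda\,\bm{a}\bm{a}^\top$ with $\lambda\in[0,1]$ chosen so that the $\bm{J}_n$-inner product of the lower block of the remainder is exactly zero. This $\lambda$ exists by continuity, since at $\lambda=0$ the value is $\langle\bm{J}_n,\bm{Y}\rangle\ge 0$.

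The remainder then has the form
\[
\begin{pmatrix}(1-\lambda)\alpha & (1-\lambda)\bm{b}^\top\\ (1-\lambda)\bm{b} & \bm{Y}-\lambda\bm{b}\bm{b}^\top/\alpha\end{pmatrix}.
\]
Its first row still lies in $L_+^n$ and it is still positive semidefinite, so it is again in $R$, now with $\langle \{0\}\oplus\bm{J}_n,\cdot\rangle=0$.

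\textbf{Boundary case.} This is the main obstacle: handle matrices $\bm{Z}\in R$ with lower block $\bm{W}$ satisfying $\langle \bm{J}_n,\bm{W}\rangle=0$. I would try the following.
\begin{enumerate}
\item Factor $\bm{W}=\sum_i\bm{w}_i\bm{w}_i^\top$ by the standard rank-one decomposition lemma (as in Sturm--Zhang), with each $\bm{w}_i^\top\bm{J}_n\bm{w}_i=0$. Choose signs so that each $\bm{w}_i$ lies on the boundary of $L_+^n$, which is possible since a vector with $\bm{w}^\top\bm{J}_n\bm{w}=0$ lies in $\pm L_+^n$.
\item Express the full $\bm{Z}=\sum_i \bm{c}_i\bm{c}_i^\top$ with $\bm{c}_i=(\gamma_i,\bm{w}_i)$ and $\gamma_i$ real, using that the column $(\alpha',\bm{b}')$ lies in the column space of the Gram factorization.
\item Fix the signs of $\gamma_i$ using $\bm{b}'=\sum\gamma_i\bm{w}_i\in L_+^n$.
\end{enumerate}
If individual signs fail, I would instead redo the decomposition in a cleverer basis: apply the Sturm--Zhang rank-one decomposition lemma to $\bm{Z}$ itself with respect to two quadratic forms simultaneously, $\{0\}\oplus\bm{J}_n$ and the bilinear coupling $\bm{e}_1\otimes(\bm{J}_n\bm{b}')$, exploiting $\bm{b}'^\top \bm{J}_n\bm{b}'\ge0$. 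That construction is where I expect most of the work.
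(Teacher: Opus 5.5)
\textbf{There is a genuine gap: the boundary case is not proved, and Step~3 as planned fails.} Your first inclusion and your reductions are fine: the case $\alpha=0$ via \cref{thm:CPLn}, and subtracting $\lambda\bm{a}\bm{a}^\top$ to reach $\langle\{0\}\oplus\bm{J}_n,\bm{Z}\rangle=0$. But the boundary case carries the entire content of the Sturm--Zhang theorem. Once $\bm{W}=\sum_i\bm{w}_i\bm{w}_i^\top$ is fixed with linearly independent $\bm{w}_i\in\partial L_+^n$, the $\gamma_i$ are uniquely determined by $\bm{b}'=\sum_i\gamma_i\bm{w}_i$. Replacing $\bm{c}_i$ by $-\bm{c}_i$ only moves $\bm{w}_i$ into $-L_+^n$, so there is no sign left to choose, and $\bm{b}'\in L_+^n$ does not force $\gamma_i\ge 0$.

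For example, take $\bm{w}_1=(1,1,0)^\top$, $\bm{w}_2=(1,-1,0)^\top$, $\bm{w}_3=(1,0,1)^\top$, $(\gamma_1,\gamma_2,\gamma_3)=(1,1,-\epsilon)$ with $0<\epsilon\le 1$, and $\bm{c}_i=(\gamma_i,\bm{w}_i^\top)^\top$. Then $\bm{Z}=\sum_{i=1}^3\bm{c}_i\bm{c}_i^\top$ lies in the right-hand side, with $\langle\{0\}\oplus\bm{J}_3,\bm{Z}\rangle=0$ and $\bm{b}'=(2-\epsilon,0,-\epsilon)^\top\in L_+^3$. Nothing in Step~1 excludes this factorization of the lower block, yet $\bm{c}_3\notin\pm(\bbR_+\times L_+^3)$. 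So the factorization of $\bm{W}$ has to be chosen depending on $\bm{b}'$, and you give no mechanism for doing that.

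Your fallback does not supply one either. There is no general simultaneous rank-one decomposition with sign conditions for two quadratic forms. For instance, $\bm{I}_2$ has zero inner product with both $\mathrm{diag}(1,-1)$ and $\begin{psmallmatrix}0&1\\1&0\end{psmallmatrix}$, but it is not a sum of matrices $\bm{x}\bm{x}^\top$ at which both forms are nonnegative. Any such argument must therefore exploit the specific structure here, and that is exactly the work left undone. (Minor: Step~2 also needs an extra term $\delta\bm{e}_1\bm{e}_1^\top$, where $\delta\ge 0$ is the Schur complement of $\bm{W}$ in $\bm{Z}$; this is harmless.)

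\textbf{Comparison with the paper, and one way to close the gap.} The paper does not prove the equality at all. It quotes \cite[Theorem~3]{SZ2003}, and facial exposedness then follows from \cref{thm:spec_implies_exposed}, since the right-hand side is spectrahedral (the condition $(\bm{X}\bm{e}_1)_{2:n+1}\in L_+^n$ is an arrow-matrix LMI). For a self-contained argument, one route is to show that every extreme ray of the right-hand side $R$, a closed pointed cone, has rank one.
\begin{itemize}
\item \emph{Rank $r\ge 3$.} Suppose $\bm{Z}=\bm{P}\bm{P}^\top\in R$ with $Z_{11}>0$. Consider perturbations $\bm{Z}\pm\epsilon\bm{P}\bm{M}\bm{P}^\top$ with $\bm{M}\in\calS^r$. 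They stay in $R$ under at most one linear condition from the $\bm{J}_n$-inequality, plus at most $r-1$ conditions from requiring $\bm{M}\bm{P}^\top\bm{e}_1$ to lie in the subspace $\{\bm{y}\mid(\bm{P}\bm{y})_{2:n+1}\in\bbR(\bm{Z}\bm{e}_1)_{2:n+1}\}$. That subspace contains $\bm{P}^\top\bm{e}_1\neq\bm{0}$, which is why it costs at most $r-1$ conditions. This leaves a solution space of dimension at least $r(r-1)/2\ge 3$, hence some $\bm{M}\notin\bbR\bm{I}_r$, so $\bm{Z}$ is not extreme.
\item \emph{Rank $r=2$.} Let $\Gamma'=\{\bm{t}\in\bbR^2\mid(\bm{P}\bm{t})_{2:n+1}\in L_+^n\}$. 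It has opening at least $\pi/2$, because $\tr(\bm{P}^\top(\{0\}\oplus\bm{J}_n)\bm{P})=\langle\{0\}\oplus\bm{J}_n,\bm{Z}\rangle\ge 0$. It contains $\bm{p}=\bm{P}^\top\bm{e}_1$ precisely because $(\bm{Z}\bm{e}_1)_{2:n+1}\in L_+^n$. Hence $\Gamma'\cap\{\bm{t}\mid\bm{p}^\top\bm{t}\ge 0\}$ still has opening at least $\pi/2$ and contains an orthonormal pair $\bm{t}_1,\bm{t}_2$. This gives $\bm{Z}=\sum_{i=1}^2(\bm{P}\bm{t}_i)(\bm{P}\bm{t}_i)^\top$ with $\bm{P}\bm{t}_i\in\bbR_+\times L_+^n$.
\end{itemize}
This planar step is where the hypothesis on $\bm{Z}\bm{e}_1$ does its essential work.
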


\begin{remark}
Burer and Dong~\cite{BD2012} defined
\begin{equation*}
\calD \coloneqq  \{\bm{X}\in \calS_+^{n+1} \mid \bm{X}\bm{s} \in \bbR_+\times L_+^n \text{ for all $\bm{s}$ generating extreme rays of $\bbR_+\times L_+^n$}\}
\end{equation*}
and asserted that $\calD$ is tractable (in the sense that the separation problem for $\calD$ can be solved in polynomial time) and equal to $\CP(\bbR_+\times L_+^n)$.
In fact, $\calD$ is \emph{not} equal to $\CP(\bbR_+\times L_+^n)$ for $n \ge 3$.
To see this, let $\bm{X} \coloneqq \{0\}\oplus \bm{I}_n$.
Since the extreme rays of $\bbR_+\times L_+^n$ are $\bbR_+\bm{e}_1$ and $\bbR_+(0,1,\bm{v}^\top)^\top$ with $\bm{v} \in \bbR^{n-1}$ of norm $1$, we can see that $\bm{X} \in \calD$.
On the other hand, since $\langle \{0\}\oplus \bm{J}_n,\bm{X}\rangle = -n +2 < 0$, it follows from \cref{thm:CP_nno1soc} that $\bm{X} \not\in \CP(\bbR_+\times L_+^n)$.
\end{remark}

\section{Case of rank at most $2$}\label{sec:rankle2}
In this section, we show that $\CP(E_+)$ is facially exposed for any EJA $E$ of rank at most $2$.

\begin{theorem}\label{thm:rank1}
Let $E$ be an EJA of rank at most $2$.
Then $\CP(E_+)$ is facially exposed.
\end{theorem}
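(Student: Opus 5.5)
We will argue by cases on the decomposition of $E$ into pairwise orthogonal simple subalgebras $E_1,\dots,E_k$ (\cite[Proposition~\RNum{3}.4.4]{FK1994}). Since the rank of $E$ is the sum of the ranks of the $E_i$, and each simple summand has rank at least $1$, there are only a few possibilities when the rank is at most $2$. If $E=\{0\}$ (rank $0$), then $E_+=\{0\}$ and $\CP(E_+)=\{0\}$, which is trivially facially exposed. Otherwise, we have one of the following:
\begin{enumerate}[label=(\alph*)]
\item $k=1$ with $E_1$ of rank $1$;
\item $k=2$ with both summands of rank $1$;
\item $k=1$ with $E_1$ of rank $2$.
\end{enumerate}

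In each case we will identify $E$, up to Jordan isomorphism, with a standard model. This requires the observation that if $E=E_1\oplus E_2$ with orthogonal subalgebras and each $E_i$ is Jordan-isomorphic to some $E_i'$, then $E$ is Jordan-isomorphic to the product algebra $E_1'\times E_2'$ with componentwise product. The map $(x_1+x_2)\mapsto(\phi_1(x_1),\phi_2(x_2))$ is a linear isomorphism, and it preserves products because $x_1\circ y_2=0$ for $x_1\in E_1$ and $y_2\in E_2$. Using \cref{thm:EJA_isom}, case (a) gives the Hadamard EJA $\bbR$, case (b) gives the Hadamard EJA $\bbR^2$, and case (c) gives $L^n$ for some $n\ge 3$. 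By \cref{cor:CP_isom_Jordan}, it then suffices to prove facial exposedness of $\CP(\bbR_+)$, $\CP(\bbR_+^2)$, and $\CP(L_+^n)$.

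The first two cones are handled by \cref{thm:CPn_facially_exposed}, since $n\le 4$ in both. The third is handled by \cref{thm:CPLn}. (For $\bbR_+$ one can also observe directly that $\CP(\bbR_+)=\bbR_+$ is a ray, so its only faces are $\{0\}$ and itself, and both are exposed.) In all of these cases the cone is in fact spectrahedral, which is consistent with the remark in the introduction.

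The main obstacle is modest: it is the bookkeeping in the rank-$2$ non-simple case, namely verifying that an orthogonal direct sum of two rank-$1$ simple subalgebras is Jordan-isomorphic to the Hadamard EJA $\bbR^2$. This follows from the direct-sum isomorphism above together with \cref{thm:EJA_isom}\eqref{enum:EJA_isom_rank1}. Alternatively, the two unit elements $e_1,e_2$ form a Jordan frame spanning $E$, so the map $ae_1+be_2\mapsto(a,b)^\top$ is a Jordan isomorphism onto $\bbR^2$.
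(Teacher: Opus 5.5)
Your proposal is correct and follows essentially the same route as the paper: it splits on the decomposition into simple subalgebras and uses \cref{thm:EJA_isom} to reduce to the Hadamard EJAs $\bbR$ and $\bbR^2$ or a Jordan spin algebra $L^n$ with $n\ge 3$, then applies \cref{cor:CP_isom_Jordan} together with \cref{thm:CPn_facially_exposed,thm:CPLn}. Your explicit check that an orthogonal sum of two rank-$1$ simple subalgebras is Jordan-isomorphic to $\bbR^2$ fills in a detail the paper leaves implicit, as does your remark that a rank-$1$ EJA is automatically simple.
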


\begin{proof}
If $E$ has rank $0$, i.e., $E = \{0\}$, then we have $\CP(E_+) = \{0\}$, which is facially exposed.
If $E$ has rank $1$, then we see from \eqref{enum:EJA_isom_rank1} of \cref{thm:EJA_isom} that $E$ is Jordan-isomorphic to the Hadamard EJA $\bbR$.
Since $\CP(\bbR_+)$ is facially exposed by \cref{thm:CPn_facially_exposed}, we see from \cref{cor:CP_isom_Jordan} that $\CP(E_+)$ is facially exposed.

In what follows, we consider the case where $E$ has rank $2$.
Let $E_1,\dots,E_k$ be simple subalgebras that are pairwise orthogonal and whose direct sum is $E$, and let $r_i \ge 1$ denote the rank of $E_i$ for every $i \in \{1,\dots,k\}$.
There are the following two possibilities for $(r_1,\dots,r_k)$ satisfying $\sum_{i=1}^k r_i = 2$:
\begin{enumerate}[label=Case~2.\alph*, ref=2.\alph*]
\item $k = 2$ and $(r_1,r_2) = (1,1)$, \label{enum:rank2_11}
\item $k = 1$ and $r_1 = 2$. \label{enum:rank2_2}
\end{enumerate}

First, we consider Case~\ref{enum:rank2_11}.
In this case, by \eqref{enum:EJA_isom_rank1} of \cref{thm:EJA_isom}, we see that $E$ is Jordan-isomorphic to the Hadamard EJA $\bbR^2$.
Since $\CP(\bbR_+^2)$ is facially exposed by \cref{thm:CPn_facially_exposed}, we see from \cref{cor:CP_isom_Jordan} that $\CP(E_+)$ is facially exposed.

Second, we consider Case~\ref{enum:rank2_2}.
By \eqref{enum:EJA_isom_rank2} of \cref{thm:EJA_isom}, $E$ is Jordan-isomorphic to a Jordan spin algebra $L^n$ for some $n \ge 3$.
Since $\CP(L_+^n)$ is facially exposed by \cref{thm:CPLn}, we see from \cref{cor:CP_isom_Jordan} that $\CP(E_+)$ is facially exposed.
\end{proof}

\section{Case of rank $3$}\label{sec:rank3}
Let $E$ be an EJA of rank $3$.
In addition, let $E_1,\dots,E_k$ be simple subalgebras that are pairwise orthogonal and whose direct sum is $E$, and let $r_i \ge 1$ denote the rank of $E_i$ for every $i \in \{1,\dots,k\}$.
Without loss of generality, we may assume that $r_1 \le  \cdots \le r_k$.
There are the following three possibilities for $(r_1,\dots,r_k)$ satisfying $\sum_{i=1}^k r_i = 3$:
\begin{enumerate}[label=Case~3.\alph*, ref=3.\alph*]
\item $k = 3$ and $(r_1,r_2,r_3) = (1,1,1)$, \label{enum:rank3_111}
\item $k = 2$ and $(r_1,r_2) = (1,2)$, \label{enum:rank3_12}
\item $k = 1$ and $r_1 = 3$. \label{enum:rank3_3}
\end{enumerate}

In Case~\ref{enum:rank3_111}, the facial exposedness can be shown in a way similar to that in the proof of \cref{thm:rank1}, so we state the result and omit its proof.
\begin{proposition}\label{prop:rank3_111}
Under the assumption of Case~\ref{enum:rank3_111}, $\CP(E_+)$ is facially exposed.
\end{proposition}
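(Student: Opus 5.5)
The plan is to mirror Case~\ref{enum:rank2_11} in the proof of \cref{thm:rank1}. In Case~\ref{enum:rank3_111}, $E$ is the direct sum of three pairwise orthogonal simple subalgebras $E_1,E_2,E_3$, each of rank $1$. By \eqref{enum:EJA_isom_rank1} of \cref{thm:EJA_isom}, each $E_i$ is one-dimensional, namely $E_i = \bbR c_i$ with $c_i$ its unit element. Hence $\{c_1,c_2,c_3\}$ is a Jordan frame of $E$ and $E = \bbR c_1\oplus \bbR c_2 \oplus \bbR c_3$.

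The first step is to check that the linear map $\phi\colon E\to\bbR^3$ sending $c_i$ to $\bm{e}_i$ is a Jordan isomorphism onto the Hadamard EJA $\bbR^3$. Since $c_i\circ c_i = c_i$ and $c_i\circ c_j = 0$ for $i\neq j$ (orthogonality of the subalgebras), bilinearity gives
\begin{equation*}
\Bigl(\sum_i x_ic_i\Bigr)\circ\Bigl(\sum_j y_jc_j\Bigr) = \sum_i x_iy_ic_i,
\end{equation*}
which $\phi$ carries to the entrywise product.

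The conclusion then follows by combining two earlier results. \Cref{thm:CPn_facially_exposed} with $n=3\le 4$ shows that $\CP(\bbR_+^3)$ is facially exposed. \Cref{cor:CP_isom_Jordan} then transfers facial exposedness to $\CP(E_+)$.

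There is no real obstacle here. The only point needing care is that orthogonality of the subalgebras yields $c_i\circ c_j=0$, and this is immediate from the definition of orthogonal subalgebras.
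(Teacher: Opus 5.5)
Your proposal is correct and is the argument the paper intends. The paper omits this proof, saying it parallels Case~\ref{enum:rank2_11} of \cref{thm:rank1}: $E$ is Jordan-isomorphic to the Hadamard EJA, and then \cref{thm:CPn_facially_exposed} and \cref{cor:CP_isom_Jordan} give the result. Your explicit check that $c_i \mapsto \bm{e}_i$ is a Jordan isomorphism supplies the step the paper leaves implicit, using the same map as in the proof of \cref{thm:rankge5}.
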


In Case~\ref{enum:rank3_12}, $\CP(E_+)$ is facially exposed as shown in the following proposition.

\begin{proposition}\label{prop:rank3_12}
Under the assumption of Case~\ref{enum:rank3_12}, $\CP(E_+)$ is facially exposed.
\end{proposition}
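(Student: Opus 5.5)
We reduce Case~\ref{enum:rank3_12} to \cref{thm:CP_nno1soc} using Jordan isomorphism invariance.

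In Case~\ref{enum:rank3_12}, $E = E_1 \oplus E_2$, where $E_1$ and $E_2$ are orthogonal simple subalgebras of ranks $1$ and $2$.

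\begin{enumerate}[label=\arabic*.]
\item \emph{Identify the factors.} By \eqref{enum:EJA_isom_rank1} of \cref{thm:EJA_isom}, there is a Jordan isomorphism $\phi_1\colon E_1\to\bbR$ onto the Hadamard EJA $\bbR$. By \eqref{enum:EJA_isom_rank2}, there are $n\ge 3$ and a Jordan isomorphism $\phi_2\colon E_2\to L^n$.
\item \emph{Build an isomorphism of $E$.} Let $\bbR\times L^n$ carry the componentwise product. This is an EJA whose cone of squares is $\bbR_+\times L_+^n$, because the square of $(a,\bm{x})$ is $(a^2,\bm{x}\circ\bm{x})$. Define $\phi(x_1+x_2)\coloneqq(\phi_1(x_1),\phi_2(x_2))$ for $x_i\in E_i$. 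This map is a linear isomorphism. It is multiplicative because
\begin{equation*}
(x_1+x_2)\circ(y_1+y_2) = x_1\circ y_1 + x_2\circ y_2,
\end{equation*}
where the cross terms vanish by orthogonality of $E_1$ and $E_2$. Hence $\phi$ is a Jordan isomorphism onto $\bbR\times L^n$.
\item \emph{Transfer facial exposedness.} By \cref{cor:CP_isom_Jordan}, $\CP(E_+)$ is facially exposed if and only if $\CP(\bbR_+\times L_+^n)$ is. The latter is facially exposed by \cref{thm:CP_nno1soc}.
\end{enumerate}

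The only point that needs care is step~2, namely that the direct sum of two Jordan isomorphisms defined on orthogonal subalgebras is again a Jordan isomorphism. This holds because orthogonality kills the cross terms. Alternatively, one can skip the algebra and note directly that $E_+ = (E_1)_+ + (E_2)_+$ is linearly isomorphic to $\bbR_+\times L_+^n$, then apply \cref{lem:CP_isom}.
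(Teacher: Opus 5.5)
Your proposal is correct and follows the paper's proof: identify $E$ up to Jordan isomorphism with $\bbR\times L^n$ for some $n\ge 3$ via \cref{thm:EJA_isom}, then conclude with \cref{cor:CP_isom_Jordan} and \cref{thm:CP_nno1soc}. Your step~2 spells out a detail the paper leaves implicit, namely that orthogonality of $E_1$ and $E_2$ kills the cross terms, so the componentwise map is a Jordan isomorphism.
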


\begin{proof}
By \eqref{enum:EJA_isom_rank1} and \eqref{enum:EJA_isom_rank2} of \cref{thm:EJA_isom}, $E$ is Jordan-isomorphic to the direct product of the Hadamard EJA $\bbR$ and a Jordan spin algebra $L^n$ for some $n\ge 3$.
Since $\CP(\bbR_+ \times L_+^n)$ is facially exposed by \cref{thm:CP_nno1soc}, we see from \cref{cor:CP_isom_Jordan} that $\CP(E_+)$ is facially exposed.
\end{proof}

In Case~\ref{enum:rank3_3}, i.e., the case where $E$ is a simple EJA of rank $3$, we will see that $\CP(E_+)$ is not facially exposed.
We achieve this by finding a face of $\CP(E_+)$ that is linearly isomorphic to $\CP(\calS_+^3)$ in \cref{lem:rank3_3_isom}, and then proving the non-facial exposedness of $\CP(\calS_+^3)$ in \cref{lem:CP_PSD3}.

\begin{lemma}\label{lem:rank3_3_isom}
Under the assumption of Case~\ref{enum:rank3_3}, $\CP(E_+)$ has an exposed face that is linearly isomorphic to $\CP(\calS_+^3)$.
\end{lemma}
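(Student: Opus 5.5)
By \eqref{enum:EJA_isom_rank3_simple} of \cref{thm:EJA_isom} and \cref{cor:CP_isom_Jordan}, we may assume $E = \calH^3(\bbA)$ for some $\bbA\in\{\bbR,\bbC,\bbH,\bbO\}$. If $\bbA = \bbR$, then $E_+ = \calS_+^3$ and $\CP(E_+)$ is itself an exposed face of $\CP(E_+)$, namely $\CP(E_+)\cap\{O\}^\perp$ with $O\in\COP(E_+)$. This disposes of that case. For the other cases the plan is to find a subalgebra $E'$ of $\calH^3(\bbA)$ that is Jordan-isomorphic to $\calS^3 = \calH^3(\bbR)$. We then apply \cref{lem:subalg_expface} to get that $\CP(E'_+)$ is an exposed face of $\CP(E_+)$. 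Finally, \cref{cor:CP_isom_Jordan} gives that $\CP(E'_+)$ is linearly isomorphic to $\CP(\calS_+^3)$.

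The natural candidate is
\begin{equation*}
E' \coloneqq \{\bm{X}\in \calH^3(\bbA) \mid \text{all entries of $\bm{X}$ lie in $\bbR$}\},
\end{equation*}
where $\bbR$ is embedded in $\bbA$ as the real scalars $\bbR 1$. Since real scalars commute and associate with every element of $\bbA$, including the octonions, the products $\bm{X}\bm{Y}$ and $\bm{Y}\bm{X}$ of two real symmetric matrices computed in $\bbA$ coincide with their usual real matrix products. Hence $E'$ is closed under $\circ$, so it is a subalgebra. Moreover, the identity map from $\calS^3$ onto $E'$ is a bijective linear map preserving $\bm{X}\circ\bm{Y} = (\bm{X}\bm{Y}+\bm{Y}\bm{X})/2$, and so it is a Jordan isomorphism.

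It remains to check that \cref{cor:CP_isom_Jordan} applies to $E'$ viewed as an EJA in its own right. Here $E'_+$ must be the cone of squares computed inside $E'$, which is $\calS_+^3$ under the identification. The associative inner product on $E'$ may be taken to be the restriction of $\bullet$, because inner products are irrelevant by \cref{lem:CP_isom}. The proof of \cref{lem:subalg_expface} also uses $E_+\cap E' = E'_+$, and this is already covered by that lemma.

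The only step needing care is the octonionic case $\bbA = \bbO$. There one should confirm that the Jordan product on $\calH^3(\bbO)$ restricted to real matrices is genuinely the symmetrized real matrix product. This holds because each entry of $\bm{X}\bm{Y}$ is a sum of products of real numbers, so nonassociativity never enters. I expect this point to be a remark rather than an obstacle. The rest is bookkeeping through the cited results.
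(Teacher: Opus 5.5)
Your proof is correct and follows essentially the paper's argument. The paper likewise uses \eqref{enum:EJA_isom_rank3_simple} of \cref{thm:EJA_isom}, the fact that $\calS^3$ is a subalgebra of $\calH^3(\bbA)$ (which you justify more carefully, including for $\bbA=\bbO$), \cref{lem:subalg_expface}, and \cref{cor:CP_isom_Jordan}. The only differences are minor: the paper pulls $\calS^3$ back to the subalgebra $\phi^{-1}(\calS^3)$ of $E$ rather than replacing $E$ by $\calH^3(\bbA)$, which avoids the easy extra appeal to \cref{thm:isom_face_exposed} that your reduction implicitly needs, and your separate treatment of $\bbA=\bbR$ is unnecessary because $\calS^3$ is trivially a subalgebra of itself.
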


\begin{proof}
By \eqref{enum:EJA_isom_rank3_simple} of \cref{thm:EJA_isom}, $E$ is Jordan-isomorphic to $\calH^3(\bbA)$ for some $\bbA \in \{\bbR,\bbC,\bbH,\bbO\}$, and we let $\phi\colon E\to \calH^3(\bbA)$ be a Jordan isomorphism.
Since $\calS^3$ is a subalgebra of $\calH^3(\bbA)$, we see that $\phi^{-1}(\calS^3)$ is a subalgebra of $E$ that is Jordan-isomorphic to $\calS^3$.
Therefore, by \cref{cor:CP_isom_Jordan,lem:subalg_expface}, $\CP(E_+)$ has the exposed face $\CP(\phi^{-1}(\calS^3)_+)$ that is linearly isomorphic to $\CP(\calS_+^3)$.
\end{proof}

\begin{lemma}\label{lem:CP_PSD3}
$\CP(\calS_+^3)$ is not facially exposed.
\end{lemma}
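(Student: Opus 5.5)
The plan is to show that $\CP(\calS_+^3)$ has an extreme ray that is not exposed. By \cref{lem:CP_exposed_face}, an extreme ray $\bbR_+(\bm{A}\otimes\bm{A})$ with $\bm{A}\in\calS_+^3$ is exposed exactly when there is $D\in\COP(\calS_+^3)$ with $Z_{\calS_+^3}(D)=\bbR_+\bm{A}$. So I will pick a specific $\bm{A}$ and show that every copositive $D$ with $q_D(\bm{A})=0$ has a second zero $\bm{B}\in\calS_+^3\setminus\bbR_+\bm{A}$.

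First I will fix the candidate point. Interior points of $\calS_+^3$ are useless. If $q_D\ge 0$ on $\calS_+^3$ and $q_D(\bm{A})=0$ with $\bm{A}$ interior, then $q_D$ is positive semidefinite on all of $\calS^3$, and a $D$ with kernel $\bbR\bm{A}$ exposes the ray. So I take a boundary point, most naturally the rank-$2$ matrix $\bm{A}=\bm{e}_1\bm{e}_1^\top+\bm{e}_2\bm{e}_2^\top$. Since a rank-one element of a PSD cone is extreme, $\bm{A}\otimes\bm{A}$ generates an extreme ray of $\CP(\calS_+^3)$, and I will check this briefly.

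Next I will collect first-order conditions on $D$. Since $\bm{A}$ minimizes $q_D$ over $\calS_+^3$, the gradient satisfies $D(\bm{A})\in(\calS_+^3)^*=\calS_+^3$ and $D(\bm{A})\bullet\bm{A}=0$. This forces $D(\bm{A})=c\,\bm{e}_3\bm{e}_3^\top$ for some $c\ge 0$. Moreover, $\bm{A}$ is in the relative interior of the face $F\cong\calS_+^2$ (the upper-left block). Hence $q_D$ restricted to $\setspan F$ is a positive semidefinite form vanishing at $\bm{A}$, so $\bm{A}$ lies in its kernel.

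Then I will derive second-order conditions along curves in $\calS_+^3$ that leave $F$. Typical examples are $\bm{X}(s)=\bm{v}(s)\bm{v}(s)^\top+\bm{e}_2\bm{e}_2^\top$ with $\bm{v}(s)=\bm{e}_1+s\bm{e}_3$, and more generally $\bm{A}+s\bm{M}+s^2\bm{e}_3\bm{e}_3^\top$ with $\bm{M}=\bm{e}_i\bm{e}_3^\top+\bm{e}_3\bm{e}_i^\top$, $i\in\{1,2\}$. Expanding $q_D(\bm{X}(s))\ge 0$ gives $2s\,D(\bm{A})\bullet\bm{M}+s^2\bigl(q_D(\bm{M})+2c\bigr)+O(s^3)$. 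The linear term vanishes by the first-order step, so we get inequalities on $q_D(\bm{M})$ and on the mixed terms $\bm{A}\bullet D(\bm{N})$. The goal is to combine these with positive semidefiniteness on $\setspan F$ to produce an explicit second zero. The natural candidates are the rank-one matrices $\bm{u}\bm{u}^\top$ with $\bm{u}=\cos\theta\,\bm{e}_1+\sin\theta\,\bm{e}_2$ (these lie in $F$). A second candidate is $\bm{A}$ plus a small rank-one term in the $\bm{e}_3$ direction when $c=0$. In that case I will try to show $q_D(\bm{B})=0$ for $\bm{B}=\bm{A}+t\bm{e}_3\bm{e}_3^\top$ or for some $\bm{B}$ in $F$.

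The main obstacle is this last step: turning the local first- and second-order necessary conditions into an exact second zero. The cleanest route I would try first is a case split on $c$. If $c=0$, then $\bm{A}$ is a zero of $q_D$ with vanishing gradient, and I would exploit the resulting freedom along $\bm{e}_3$. If $c>0$, I would show that the semidefinite form $q_D|_{\setspan F}$ must have kernel of dimension at least $2$, which yields a zero $\bm{u}\bm{u}^\top\in F$ not proportional to $\bm{A}$. If $\bm{A}$ turns out not to work, I would fall back on the same scheme with a different boundary point, for instance a rank-one $\bm{A}$ or a point inside a face $\CP(\calS_+^3\cap X)$ given by \cref{thm:CP_face_subsp} for a suitable subspace $X$. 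In that fallback I would use \cref{thm:nonexposed} to pass from a non-exposed ray of the smaller face to $\CP(\calS_+^3)$.
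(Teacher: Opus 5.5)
Your plan looks for a non-exposed \emph{extreme ray}, but $\CP(\calS_+^3)$ has none. The approach therefore fails at your $\bm{A}$ and at every fallback point.

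For $\bm{A}=\bm{e}_1\bm{e}_1^\top+\bm{e}_2\bm{e}_2^\top$, take $D$ with $q_D(\bm{X})=(X_{11}-X_{22})^2+X_{12}^2+X_{33}^2$. This is a sum of squares of linear forms, so $D\in\COP(\calS_+^3)$. On $\calS_+^3$ the equation $X_{33}=0$ forces $X_{13}=X_{23}=0$. Hence $Z_{\calS_+^3}(D)=\bbR_+\bm{A}$, and $\CP(\calS_+^3)\cap\{D\}^\perp=\bbR_+(\bm{A}\otimes\bm{A})$ by \cref{lem:CP_exposed_face}.

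This $D$ refutes both of your branches. It has $D(\bm{A})=\bm{O}$ and $q_D(\bm{A}+t\bm{e}_3\bm{e}_3^\top)=t^2>0$, which kills the $c=0$ branch. Replacing $X_{33}^2$ by $X_{33}\tr(\bm{X})$ gives $D(\bm{A})=\bm{e}_3\bm{e}_3^\top$, so $c>0$. Yet $q_D$ restricted to $\setspan F$ still has kernel $\bbR\bm{A}$, which kills the $c>0$ branch.

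The same construction works for any nonzero $\bm{A}\in\calS_+^3$ of rank $r$. Choose an invertible $\bm{P}$ with $\bm{P}^{-1}\bm{A}(\bm{P}^{-1})^\top=\sum_{i\le r}\bm{e}_i\bm{e}_i^\top$ and put $\bm{W}=\bm{P}^{-1}\bm{X}(\bm{P}^{-1})^\top$. Then take $q_D$ to be the sum of the squares of $W_{ii}$ ($i>r$), $W_{ij}$ ($i<j\le r$), and $W_{ii}-W_{i+1,i+1}$ ($i<r$). Every extreme ray of $\CP(\calS_+^3)$ has the form $\bbR_+(\bm{A}\otimes\bm{A})$, so all of them are exposed. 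A ray exposed in $\CP(\calS_+^3)$ stays exposed in any face containing it, so the fallback through $\CP(\calS_+^3\cap X)$ cannot help either.

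What you are missing is a non-exposed face of dimension greater than one, together with a global rather than pointwise argument for it. The paper builds this face as a face of a face:
\begin{itemize}
\item $D_1$ exposes $\CP(Z_1)$, where $Z_1$ consists of the $\bm{x}\bm{x}^\top$ with $q_{\bm{G}}(\bm{x})=0$ together with the rank-two family $\bbR_+\bm{X}(t)$, $t\in(-1,1)$.
\item $D_2$ then exposes $\CP(Z_2)$ inside $\CP(Z_1)$, with $Z_2$ a union of six rays.
\end{itemize}
For any $D\in\COP(\calS_+^3)$ vanishing on $\CP(Z_2)$, the paper shows $q_D(\bm{x}\bm{x}^\top)=\alpha q_{\bm{G}}(\bm{x})^2$. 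The ingredients are:
\begin{itemize}
\item Hilbert's theorem that nonnegative ternary quartics are sums of squares;
\item interpolation of each $p_i$ in that decomposition at the four parameters $t\in\{-\tfrac12,0,\tfrac12,2\}$ along the parametrization $\bm{y}(t)$ of the conic $q_{\bm{G}}=0$;
\item the fact that quadratic forms on $\calS^3$ vanishing on all rank-one matrices are exactly $\langle\bm{B},\adj(\bm{X})\rangle$;
\item first-order optimality at $\bm{X}(0)$ and $\bm{X}(\tfrac12)$, which eliminates the $q_{\bm{H}}$ contributions.
\end{itemize}
Hence $D$ also vanishes on $\bm{x}\bm{x}^\top\otimes\bm{x}\bm{x}^\top$ for every $\bm{x}$ on that conic, including $\bm{X}^*\otimes\bm{X}^*\notin\CP(Z_2)$. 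Local first- and second-order conditions at a single matrix cannot substitute for this step.
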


We defer the proof of \cref{lem:CP_PSD3} to \cref{subsec:proof_lem_rank3_3} so as not to disturb the flow of the argument.
Combining \cref{lem:rank3_3_isom,lem:CP_PSD3}, we obtain the following proposition.

\begin{proposition}\label{prop:rank3_3}
Under the assumption of Case~\ref{enum:rank3_3}, $\CP(E_+)$ is not facially exposed.
\end{proposition}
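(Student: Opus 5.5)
The proposition follows by combining \cref{lem:rank3_3_isom} with \cref{lem:CP_PSD3}. By \cref{lem:rank3_3_isom}, $\CP(E_+)$ has an exposed face $F$ that is linearly isomorphic to $\CP(\calS_+^3)$; concretely, $F = \CP(\phi^{-1}(\calS^3)_+)$ for a Jordan isomorphism $\phi\colon E\to\calH^3(\bbA)$. By \cref{lem:CP_PSD3}, $\CP(\calS_+^3)$ is not facially exposed. Since facial exposedness is invariant under linear isomorphism (the final assertion of \cref{thm:isom_face_exposed}), $F$ is not facially exposed as a cone. We then apply the final assertion of \cref{thm:nonexposed} with $F_1 = F$, a face of $K = \CP(E_+)$; this shows that $\CP(E_+)$ is not facially exposed.

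All the substance therefore lies in \cref{lem:CP_PSD3}, which the paper defers. My plan for it is to exhibit a face of $\CP(\calS_+^3)$ that is itself non-exposed, most naturally by transferring the non-exposedness of $\CP(\bbR_+^5)$ from \cref{thm:CPn_facially_exposed}.

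First, I would look for a $5$-dimensional subcone of $\calS_+^3$ of the form $\calS_+^3\cap X$, with $X$ a subspace, that is linearly isomorphic to $\bbR_+^5$. By \cref{thm:CP_face_subsp} and \cref{lem:CP_isom}, such a subcone would give an exposed face of $\CP(\calS_+^3)$ isomorphic to $\CP(\bbR_+^5)$, and \cref{thm:nonexposed} would finish the argument. However, polyhedral sections $\calS_+^3\cap X$ have at most $3$ mutually orthogonal extreme rays, and a simplicial section has dimension at most the rank, $3$. So this route is blocked.

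Instead I would use exposed faces $\CP(Z_{K}(D))$ from \cref{lem:CP_exposed_face}. Choose $D\in\COP(\calS_+^3)$ whose zero set $Z_K(D)$ in $K=\calS_+^3$ is a union of finitely many rays, say $\bbR_+\bm{x}_i\bm{x}_i^\top$ for suitable vectors $\bm{x}_1,\dots,\bm{x}_m$, or a union of small faces. The face $\CP(Z_K(D))$ is then a concrete completely positive cone generated by $(\bm{x}_i\bm{x}_i^\top)\otimes(\bm{x}_i\bm{x}_i^\top)$ and related products. Inside it I would exhibit a face $G$ and verify non-exposedness directly: every $D'\in\COP$ vanishing on $G$ must also vanish on some extra generator outside $G$. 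This mimics Zhang's argument for $\CP(\bbR_+^5)$, which uses the Horn-type structure.

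A cheaper alternative is to find a non-exposed face directly, namely a face $\CP(S)$ where $S$ is the union of two rank-one rays $\bbR_+\bm{a}\bm{a}^\top$, $\bbR_+\bm{b}\bm{b}^\top$ together with a face of $\calS_+^3$ touching them. One then shows that any copositive quadratic form vanishing on $S$ is forced, by first-order optimality conditions at the zeros, to vanish on a strictly larger set.

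The main obstacle is this last step: characterizing all copositive $D$ that vanish on a prescribed set. I would first try to use the necessary conditions at a zero $\bm{x}\in\setint$ of a face, namely that $D(\bm{x})$ lies in the dual of the tangent cone. This linearizes the problem and should pin $D$ down enough to show it also vanishes on an additional point.
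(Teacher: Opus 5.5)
Your argument is correct and is exactly the paper's proof: \cref{lem:rank3_3_isom} supplies an exposed face of $\CP(E_+)$ linearly isomorphic to $\CP(\calS_+^3)$, \cref{lem:CP_PSD3} says the latter is not facially exposed, and \cref{thm:isom_face_exposed,thm:nonexposed} transfer this to $\CP(E_+)$. Your additional sketch for \cref{lem:CP_PSD3} is not needed for this proposition, and as written it is only a heuristic plan, not a proof; the paper proves that lemma via the chain of faces $\CP(Z_1)\supseteq\CP(Z_2)$, each exposed in its predecessor, together with Hilbert's theorem that nonnegative ternary quartics are sums of squares and first-order optimality at $\bm{X}(0)$ and $\bm{X}(\tfrac{1}{2})$.
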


\begin{proof}
By \cref{lem:rank3_3_isom}, $\CP(E_+)$ has an exposed face that is linearly isomorphic to $\CP(\calS_+^3)$.
Since $\CP(\calS_+^3)$ is not facially exposed by \cref{lem:CP_PSD3}, it follows from \cref{thm:nonexposed,thm:isom_face_exposed} that $\CP(E_+)$ is not facially exposed.
\end{proof}

Summarizing \cref{prop:rank3_111,prop:rank3_12,prop:rank3_3}, we obtain the following characterization of the facial exposedness of $\CP(E_+)$ when $E$ has rank $3$.

\begin{theorem}\label{thm:rank3}
Let $E$ be an EJA of rank $3$.
Then $\CP(E_+)$ is facially exposed if and only if $E$ is not simple.
\end{theorem}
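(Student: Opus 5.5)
The plan is to split according to the three possibilities for the ranks $(r_1,\dots,r_k)$ of the simple components of $E$ listed before \cref{prop:rank3_111}. If $E$ is not simple, then either $k=3$ with $(r_1,r_2,r_3)=(1,1,1)$, which is Case~\ref{enum:rank3_111}, or $k=2$ with $(r_1,r_2)=(1,2)$, which is Case~\ref{enum:rank3_12}.

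In Case~\ref{enum:rank3_111}, \cref{prop:rank3_111} gives facial exposedness of $\CP(E_+)$. Its proof is the same as in \cref{thm:rank1}: by \eqref{enum:EJA_isom_rank1} of \cref{thm:EJA_isom}, $E$ is Jordan-isomorphic to the Hadamard EJA $\bbR^3$. Then \cref{thm:CPn_facially_exposed} with $n=3\le 4$ and \cref{cor:CP_isom_Jordan} finish this case. In Case~\ref{enum:rank3_12}, \cref{prop:rank3_12} gives facial exposedness directly. This proves the ``if'' direction.

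For the ``only if'' direction, I argue by contraposition. If $E$ is simple, then $k=1$ and $r_1=3$, which is Case~\ref{enum:rank3_3}, and \cref{prop:rank3_3} says $\CP(E_+)$ is not facially exposed. Since the three cases exhaust all rank-$3$ EJAs, and $E$ is simple exactly in Case~\ref{enum:rank3_3}, the equivalence follows.

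Given the stated results, the theorem is pure bookkeeping. The real obstacle is the deferred \cref{lem:CP_PSD3}, the non-facial exposedness of $\CP(\calS_+^3)$. My approach there would be as follows:
\begin{enumerate}
\item Pick a copositive $D$ whose zero set $Z_{\calS_+^3}(D)$, via \cref{lem:CP_exposed_face}, yields an exposed face that is small enough to analyze.
\item Inside that face, exhibit a face $F$ that is not exposed. To check this, show that every $D'\in\COP(\calS_+^3)$ vanishing on the generators of $F$ must also vanish at some extra rank-one point $X\otimes X$ with $X\in\calS_+^3$, $X\notin F$.
\item Alternatively, find a face linearly isomorphic to a known non-exposed cone and conclude with \cref{thm:nonexposed}.
\end{enumerate}
The key difficulty I expect is proving the forced vanishing in step~2, which requires controlling quadratic forms that are nonnegative on $\calS_+^3$ along a limiting family of rank-one matrices.
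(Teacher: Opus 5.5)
Your proof is correct and matches the paper's own argument, which simply combines \cref{prop:rank3_111,prop:rank3_12,prop:rank3_3} over the exhaustive split into Cases~\ref{enum:rank3_111}--\ref{enum:rank3_3}. $E$ is simple exactly in Case~\ref{enum:rank3_3}, and Case~\ref{enum:rank3_111} reduces to $\CP(\bbR_+^3)$ as in \cref{thm:rank1}. Your remarks on \cref{lem:CP_PSD3} are not needed for this theorem; the paper proves that lemma with a two-step chain of exposed faces $\CP(Z_1)\supseteq\CP(Z_2)$, and it obtains the forced vanishing on all $\bm{x}\bm{x}^\top\otimes\bm{x}\bm{x}^\top$ with $q_{\bm{G}}(\bm{x})=0$ via Hilbert's theorem on nonnegative ternary quartics.
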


\subsection{Proof of \cref{lem:CP_PSD3}}\label{subsec:proof_lem_rank3_3}
This subsection is devoted to the proof of \cref{lem:CP_PSD3}.
For clarity, we first outline the structure of the proof.
In \cref{subsubsec:CP_PSD3_step1}, we construct an exposed face of $\CP(\calS_+^3)$ and then construct an exposed face of the former face.
By \eqref{enum:face_iff} of \cref{thm:nonexposed}, the second face is also a face of $\CP(\calS_+^3)$.
In \cref{subsubsec:CP_PSD3_step2}, we show that the second face is not exposed in $\CP(\calS_+^3)$, which implies that $\CP(\calS_+^3)$ is not facially exposed.

We use adjugates of matrices in the subsequent discussion, so the following lemma summarizes formulas for adjugates for later use.

\begin{lemma}\label{lem:adjX}
\leavevmode
\begin{enumerate}[label=(\roman*), ref=\roman*]
\item Let $\bm{X} \in \calS^3$.
If $\bm{X}$ has rank at most $1$, then $\adj(\bm{X}) = \bm{O}$. \label{enum:adj_zero}
\item $\{(\adj(\bm{X}))_{ij} \in H_2(\calS^3) \mid 1\le i\le j\le 3\}$ is linearly independent. \label{enum:adj_linearly_indep}
\item Let $\bm{B}\in \calS^3$.
Then the gradient $\nabla \langle\bm{B},\adj(\bm{X})\rangle \in \calS^3$ satisfies
\begin{equation*}
\begin{cases}
(\nabla \langle\bm{B},\adj(\bm{X})\rangle)_{11} = B_{33}X_{22} - 2B_{23}X_{23} + B_{22}X_{33},\\
(\nabla \langle\bm{B},\adj(\bm{X})\rangle)_{12} = -B_{33}X_{12} + B_{23}X_{13} + B_{13}X_{23} -B_{12}X_{33},\\
(\nabla \langle\bm{B},\adj(\bm{X})\rangle)_{13} = B_{23}X_{12} -B_{13}X_{22} -B_{22}X_{13} + B_{12}X_{23}.
\end{cases}
\end{equation*}\label{enum:B_adjX_nabla}
\end{enumerate}
\end{lemma}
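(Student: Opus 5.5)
We prove the three items of the lemma separately; each is an elementary computation with the cofactor expansion of a $3\times 3$ symmetric matrix. Throughout we write $\adj(\bm{X})_{ij} = (-1)^{i+j}\det \bm{X}^{(ji)}$, where $\bm{X}^{(ji)}$ is $\bm{X}$ with row $j$ and column $i$ deleted. For symmetric $\bm{X}$ this gives explicit quadratic forms, for instance
\begin{equation*}
\adj(\bm{X})_{11} = X_{22}X_{33}-X_{23}^2,\quad
\adj(\bm{X})_{12} = -(X_{12}X_{33}-X_{13}X_{23}),\quad
\adj(\bm{X})_{13} = X_{12}X_{23}-X_{13}X_{22}.
\end{equation*}
The remaining entries are obtained by permuting indices.

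For item \eqref{enum:adj_zero}, every entry of $\adj(\bm{X})$ is a $2\times 2$ minor of $\bm{X}$. If $\bm{X}$ has rank at most $1$, then every $2\times 2$ submatrix has rank at most $1$, so each such minor vanishes and $\adj(\bm{X}) = \bm{O}$.

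For item \eqref{enum:adj_linearly_indep}, we list the six quadratic forms $\adj(\bm{X})_{ij}$, $1\le i\le j\le 3$, as polynomials in the six coordinates $X_{kl}$, $k\le l$. Each contains a monomial that appears in none of the others:
\begin{itemize}
\item $X_{22}X_{33}$ appears only in $\adj(\bm{X})_{11}$, and likewise $X_{11}X_{33}$ only in $(2,2)$ and $X_{11}X_{22}$ only in $(3,3)$;
\item $X_{12}X_{33}$ appears only in $(1,2)$, $X_{13}X_{22}$ only in $(1,3)$, and $X_{23}X_{11}$ only in $(2,3)$.
\end{itemize}
Suppose $\sum_{i\le j} c_{ij}\,\adj(\bm{X})_{ij} = 0$. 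Comparing the coefficient of each distinguished monomial forces the corresponding $c_{ij}$ to vanish, so the six forms are linearly independent. Checking the uniqueness claims is the only point requiring care, and it is routine.

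For item \eqref{enum:B_adjX_nabla}, we expand
\begin{equation*}
f(\bm{X}) \coloneqq \langle \bm{B},\adj(\bm{X})\rangle = \sum_i B_{ii}\,\adj(\bm{X})_{ii} + 2\sum_{i<j} B_{ij}\,\adj(\bm{X})_{ij}
\end{equation*}
as an explicit quadratic polynomial. The one subtle step is the gradient convention in $\calS^3$ with the Frobenius inner product. The gradient is defined by $\langle \nabla f,\bm{H}\rangle = \frac{d}{dt} f(\bm{X}+t\bm{H})\big|_{t=0}$ for symmetric $\bm{H}$. Hence $(\nabla f)_{ii} = \partial f/\partial X_{ii}$ and $(\nabla f)_{ij} = \tfrac12\,\partial f/\partial X_{ij}$ for $i\neq j$, treating $X_{ij}=X_{ji}$ as a single variable.

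With this convention the entries follow by direct differentiation:
\begin{itemize}
\item Only $\adj_{22}$, $\adj_{33}$ and $\adj_{23}$ involve $X_{11}$. Differentiating them gives $(\nabla f)_{11} = B_{33}X_{22} - 2B_{23}X_{23} + B_{22}X_{33}$, using $\adj_{23} = -(X_{11}X_{23}-X_{12}X_{13})$.
\item For the $(1,2)$ entry, we differentiate the terms containing $X_{12}$. These are $2B_{33}\cdot(-X_{12}^2/2)\cdot$-type contributions from $\adj_{33} = X_{11}X_{22}-X_{12}^2$, together with the contributions from $\adj_{12}$, $\adj_{13}$ and $\adj_{23}$. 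Halving the result yields the stated formula.
\item The $(1,3)$ entry is obtained in the same way.
\end{itemize}

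The main obstacle is purely bookkeeping: tracking the factor $2$ on off-diagonal terms and the factor $\tfrac12$ in the gradient convention. We would verify the result on a test case such as $\bm{B}=\bm{I}_3$, where $f(\bm{X})$ is the second elementary symmetric function of the eigenvalues, as a sanity check.
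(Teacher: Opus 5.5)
Your proposal is correct. Items (i) and (ii) follow the paper's argument. The paper also reduces (i) to the vanishing of $2\times 2$ minors. For (ii) it notes the slightly stronger fact that the twelve monomials in the six upper-triangular entries of $\adj(\bm{X})$ are pairwise distinct, whereas you only use one private monomial per entry; either suffices.

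For (iii) your route differs. The paper does not differentiate the cofactor entries. It uses the Cayley--Hamilton-type identity $\adj(\bm{X}) = \bm{X}^2 - \tr(\bm{X})\bm{X} + \frac12(\tr(\bm{X})^2 - \tr(\bm{X}^2))\bm{I}_3$ (Taussky), and differentiating this gives the coordinate-free formula
\[
\nabla\langle\bm{B},\adj(\bm{X})\rangle = \bm{B}\bm{X} + \bm{X}\bm{B} - \langle\bm{B},\bm{X}\rangle\bm{I}_3 - \tr(\bm{X})\bm{B} + \tr(\bm{B})\tr(\bm{X})\bm{I}_3 - \tr(\bm{B})\bm{X},
\]
from which the three entries can be read off. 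That route produces the gradient with respect to $\langle\cdot,\cdot\rangle$ directly, so the factor-$\frac12$ bookkeeping you flag never arises.

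Your direct partial-differentiation route is self-contained and needs no external identity. Your convention $(\nabla f)_{ij} = \frac12\,\partial f/\partial X_{ij}$ for $i\ne j$ is the one compatible with the first-order optimality condition the paper uses later. Carrying it out gives
\[
\partial f/\partial X_{12} = 2(-B_{33}X_{12} + B_{23}X_{13} + B_{13}X_{23} - B_{12}X_{33}),\qquad \partial f/\partial X_{13} = 2(B_{23}X_{12} - B_{13}X_{22} - B_{22}X_{13} + B_{12}X_{23}),
\]
which confirms the stated formulas. In a write-up, state these two derivatives explicitly rather than describing them. In particular, the phrase ``$2B_{33}\cdot(-X_{12}^2/2)$'' is confusing. $\adj(\bm{X})_{33}$ is a diagonal entry and enters $f$ with coefficient $B_{33}$, not $2B_{33}$, although the resulting term $-B_{33}X_{12}^2$ is correct.
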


\begin{proof}
For $\bm{X}\in \calS^3$, its adjugate is calculated as
\begin{align}
\adj(\bm{X}) &= \begin{pmatrix}
X_{22}X_{33}-X_{23}^2 & X_{13}X_{23} - X_{12}X_{33} & X_{12}X_{23}-X_{22}X_{13}\\
X_{13}X_{23} - X_{12}X_{33} & X_{11}X_{33}-X_{13}^2 & X_{12}X_{13} - X_{11}X_{23} \\
X_{12}X_{23}-X_{22}X_{13} & X_{12}X_{13} - X_{11}X_{23} & X_{11}X_{22}-X_{12}^2
\end{pmatrix} \label{eq:adjX_entries}\\
&= \bm{X}^2 - \tr(\bm{X})\bm{X} + \frac{1}{2}(\tr(\bm{X})^2 - \tr(\bm{X}^2))\bm{I}_3, \label{eq:adjX_tr}
\end{align}
where the second equality follows from \cite[Theorem~1]{Tau1968}.
\eqref{enum:adj_zero} holds since every $2\times 2$ minor of $\bm{X}\in \calS^3$ of rank at most $1$ is $0$.
\eqref{enum:adj_linearly_indep} holds since every $X_{ij}X_{kl}$ for $1\le i\le j\le 3$ and $1\le k\le l\le 3$ appears in the upper-triangular entries of $\adj(\bm{X})$ at most once as shown in \eqref{eq:adjX_entries}.
\eqref{enum:B_adjX_nabla} can be derived from \eqref{eq:adjX_tr}.
\end{proof}

\subsubsection{Construction of a candidate non-exposed face of $\CP(\calS_+^3)$}\label{subsubsec:CP_PSD3_step1}
Let
\begin{equation*}
\bm{G} \coloneqq \begin{pmatrix}
1 & 0 & 1\\
0 & 1 & 0\\
1 & 0 & 0
\end{pmatrix}
\end{equation*}
and let $D_1$ be the element in $\calS(\calS^3)$ such that the associated quadratic form satisfies
\begin{align}
q_{D_1}(\bm{X}) &= \langle \bm{G},\bm{X}\rangle^2 + 4\langle \bm{G}^{-1},\adj(\bm{X})\rangle + 4\bm{e}_1^\top\adj(\bm{X})\bm{e}_1 \label{eq:qD1}\\
&=  (X_{11} + X_{22} + 2X_{13})^2 \nonumber\\
&\quad + 4(X_{11}X_{33}-X_{13}^2 -X_{11}X_{22}+X_{12}^2 + 2X_{12}X_{23} -2X_{22}X_{13}) \nonumber\\
&\quad + 4(X_{22}X_{33}-X_{23}^2).\nonumber
\end{align}

\begin{lemma}\label{lem:rank3_3_D1_COP}
For any $\bm{X}\in \setint\calS_+^3$, we have
\begin{align}
\langle\bm{G},\bm{X}\rangle^2 + 4\langle \bm{G}^{-1},\adj(\bm{X})\rangle &> 0, \label{eq:qD1_1st2nd}\\
\bm{e}_1^\top\adj(\bm{X})\bm{e}_1 &> 0. \label{eq:eq:qd1_3rd}
\end{align}
In particular, it follows that $D_1 \in \COP(\calS_+^3)$.
\end{lemma}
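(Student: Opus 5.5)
The plan is to reduce both inequalities to statements about eigenvalues, using the identity $\adj(\bm{X}) = \det(\bm{X})\bm{X}^{-1}$, which holds for every invertible $\bm{X}$ and in particular for $\bm{X}\in\setint\calS_+^3$. The second inequality \eqref{eq:eq:qd1_3rd} is then immediate. For $\bm{X}$ positive definite, $\det(\bm{X})>0$ and $\bm{X}^{-1}$ is positive definite, so $\adj(\bm{X})$ is positive definite. Hence $\bm{e}_1^\top\adj(\bm{X})\bm{e}_1>0$; equivalently, $X_{22}X_{33}-X_{23}^2>0$ is a leading principal minor condition.

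For \eqref{eq:qD1_1st2nd}, I would first record the inertia of $\bm{G}$. On the coordinates $\{1,3\}$ it is the block $\begin{pmatrix}1&1\\1&0\end{pmatrix}$, with eigenvalues $(1\pm\sqrt5)/2$, and on coordinate $2$ it is $1$. So $\bm{G}$ has two positive eigenvalues and one negative eigenvalue, and $\det(\bm{G})=-1$. Next, I would write $\bm{X}=\bm{L}\bm{L}^\top$ with $\bm{L}$ invertible and set $\bm{M}\coloneqq\bm{L}^\top\bm{G}\bm{L}$. By Sylvester's law of inertia, $\bm{M}$ has eigenvalues $a,b>0>c$, and $\det(\bm{M})=\det(\bm{L})^2\det(\bm{G})=-\det(\bm{X})$. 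Then $\langle\bm{G},\bm{X}\rangle=\tr(\bm{M})$. For the adjugate term,
\begin{equation*}
\langle\bm{G}^{-1},\adj(\bm{X})\rangle=\det(\bm{X})\tr(\bm{G}^{-1}\bm{X}^{-1})=\det(\bm{X})\tr(\bm{M}^{-1})=\frac{\det(\bm{X})}{\det(\bm{M})}\tr(\adj(\bm{M}))=-(ab+bc+ca).
\end{equation*}

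Substituting these, the left-hand side of \eqref{eq:qD1_1st2nd} becomes
\begin{equation*}
(a+b+c)^2-4(ab+bc+ca)=(a-b)^2+c^2-2c(a+b),
\end{equation*}
which is strictly positive because $c<0$ and $a+b>0$. The only step needing care is the bookkeeping of the sign of $\det(\bm{M})$ and the identity $\tr(\bm{M}^{-1})=\tr(\adj(\bm{M}))/\det(\bm{M})$; beyond that I expect no real obstacle.

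Finally, summing the two inequalities (with weights $1$ and $4$) and recalling \eqref{eq:qD1} gives $q_{D_1}(\bm{X})>0$ on $\setint\calS_+^3$. Since $q_{D_1}$ is a continuous polynomial and $\calS_+^3$ is the closure of its interior, it follows that $q_{D_1}\ge0$ on $\calS_+^3$, that is, $D_1\in\COP(\calS_+^3)$.
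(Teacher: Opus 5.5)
Your proof is correct and follows essentially the same route as the paper. Both use $\adj(\bm{X})=\det(\bm{X})\bm{X}^{-1}$ for the second inequality, and for the first both apply Sylvester's law of inertia to a congruence of $\bm{G}$ by a factor of $\bm{X}$ (the paper uses $\sqrt{\bm{X}}\bm{G}\sqrt{\bm{X}}$, you use $\bm{L}^\top\bm{G}\bm{L}$), reducing the left-hand side to $(a-b)^2+c^2-2c(a+b)>0$ with eigenvalues $a,b>0>c$; one small wording slip is that $X_{22}X_{33}-X_{23}^2$ is a principal but not a leading principal minor, which does not affect the argument.
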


\begin{proof}
Let $\bm{X} \in \setint\calS_+^3$.
Since $\bm{X}$ is positive definite, $\adj(\bm{X})$ can be written as $\adj(\bm{X}) = \det(\bm{X})\bm{X}^{-1}$~\cite[equation~(0.8.2.2)]{HJ2013}.
This implies that $\adj(\bm{X})$ is also positive definite, and thus we have \eqref{eq:eq:qd1_3rd}.

In what follows, we prove \eqref{eq:qD1_1st2nd}.
The eigenvalues of $\bm{G}$ are $\frac{1 \pm \sqrt{5}}{2}$ and $1$, and the inertia of $\bm{G}$ is equal to that of $\sqrt{\bm{X}}\bm{G}\sqrt{\bm{X}}$ by Sylvester's law of inertia~\cite[Theorem~4.5.8]{HJ2013}.
Let $l_1$, $l_2$, $-l_3$ denote the eigenvalues of $\sqrt{\bm{X}}\bm{G}\sqrt{\bm{X}}$, where $l_1,l_2,l_3 > 0$.
Then each of the two terms that appear on the left-hand side of \eqref{eq:qD1_1st2nd} is calculated as follows.
First, we have
\begin{equation*}
\langle \bm{G},\bm{X}\rangle^2 = \tr(\sqrt{\bm{X}}\bm{G}\sqrt{\bm{X}})^2 = (l_1 + l_2 - l_3)^2.
\end{equation*}
Second, we have
\begin{align*}
4\langle \bm{G}^{-1},\adj(\bm{X})\rangle &= 4\det(\bm{X})\tr((\sqrt{\bm{X}}\bm{G}\sqrt{\bm{X}})^{-1})\\
&= -4\det(\sqrt{\bm{X}}\bm{G}\sqrt{\bm{X}})\tr((\sqrt{\bm{X}}\bm{G}\sqrt{\bm{X}})^{-1})\\
&= 4(l_2l_3 + l_1l_3 - l_1l_2),
\end{align*}
where the first equality follows from $\adj(\bm{X}) = \det(\bm{X})\bm{X}^{-1}$ and the second equality follows from $\det(\bm{G}) = -1$.
Therefore, it follows that
\begin{align*}
\langle\bm{G},\bm{X}\rangle^2 + 4\langle \bm{G}^{-1},\adj(\bm{X})\rangle &= (l_1 + l_2 - l_3)^2 + 4(l_2l_3 + l_1l_3 - l_1l_2)\\
&= (l_1 - l_2)^2 + l_3^2 + 2(l_1 + l_2)l_3\\
&> 0,
\end{align*}
so we have \eqref{eq:qD1_1st2nd}.

By \eqref{eq:qD1_1st2nd} and \eqref{eq:eq:qd1_3rd}, we see that $q_{D_1}(\bm{X}) > 0$ for all $\bm{X} \in \setint\calS_+^3$.
The nonnegativity of $q_{D_1}$ on $\calS_+^3$ follows from its continuity.
Therefore, we obtain $D_1 \in \COP(\calS_+^3)$.
\end{proof}

Recall that $Z_{\calS_+^3}(D_1)$, denoted by $Z_1$ for simplicity, is defined in \eqref{eq:def_ZKD}.
By \cref{lem:CP_exposed_face}, $\CP(Z_1)$ is an exposed face of $\CP(\calS_+^3)$.
The following lemma gives a more explicit description of $Z_1$.

\begin{lemma}\label{lem:rank3_3_Z1}
For $t\in \bbR$, we define
\begin{equation}
\bm{X}(t) \coloneqq \begin{pmatrix}
1 & t & -t^2\\
t & 1 & -t\\
-t^2 & -t & t^2
\end{pmatrix} \in \calS^3. \label{eq:def_Xt}
\end{equation}
Then we have
\begin{equation}
Z_1 = \{\bm{x}\bm{x}^\top \mid q_{\bm{G}}(\bm{x}) = 0\} \cup \bbR_+\{\bm{X}(t) \mid t\in (-1,1)\}. \label{eq:rank3_3_Z1}
\end{equation}
\end{lemma}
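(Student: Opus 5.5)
The plan is to exploit the decomposition \eqref{eq:qD1} of $q_{D_1}$ into two pieces, each nonnegative on $\calS_+^3$ by \cref{lem:rank3_3_D1_COP} and continuity. Then $\bm{X}\in\calS_+^3$ lies in $Z_1$ if and only if both
\begin{equation*}
\langle \bm{G},\bm{X}\rangle^2 + 4\langle \bm{G}^{-1},\adj(\bm{X})\rangle = 0
\quad\text{and}\quad
\bm{e}_1^\top\adj(\bm{X})\bm{e}_1 = 0 .
\end{equation*}
I would then split according to the rank of $\bm{X}$. If $\bm{X}$ is positive definite, \cref{lem:rank3_3_D1_COP} gives $q_{D_1}(\bm{X})>0$, so $\bm{X}\notin Z_1$. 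If $\bm{X}=\bm{x}\bm{x}^\top$ has rank at most $1$, then $\adj(\bm{X})=\bm{O}$ by \eqref{enum:adj_zero} of \cref{lem:adjX}. Hence $q_{D_1}(\bm{X})=\langle\bm{G},\bm{x}\bm{x}^\top\rangle^2=q_{\bm{G}}(\bm{x})^2$, and this yields exactly the first set in \eqref{eq:rank3_3_Z1}.

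The substantive case is rank $2$. Here $\adj(\bm{X})=\mu\,\bm{v}\bm{v}^\top$, where $\bm{v}$ is a unit vector spanning $\Ker(\bm{X})$ and $\mu>0$ is the product of the two nonzero eigenvalues. Write $\bm{X}=\bm{P}\bm{M}\bm{P}^\top$ with $\bm{P}$ having orthonormal columns spanning $\bm{v}^\perp$ and $\bm{M}$ a positive definite $2\times 2$ matrix; then $\mu=\det\bm{M}$.

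The condition $\bm{e}_1^\top\adj(\bm{X})\bm{e}_1=0$ forces $v_1=0$. So I write $\bm{v}=(0,s,u)^\top$ with $s^2+u^2=1$ and take $\bm{P}=[\bm{e}_1,\ (0,u,-s)^\top]$. With $\bm{M}=\begin{pmatrix} a&b\\ b&d\end{pmatrix}$, a direct computation using $\bm{G}^{-1}=\begin{pmatrix}0&0&1\\0&1&0\\1&0&-1\end{pmatrix}$ gives
\begin{equation*}
\langle\bm{G},\bm{X}\rangle = a-2bs+du^2,
\qquad
\langle\bm{G}^{-1},\adj(\bm{X})\rangle=(ad-b^2)(s^2-u^2).
\end{equation*}
It therefore remains to solve
\begin{equation*}
(a-2bs+du^2)^2+4(ad-b^2)(s^2-u^2)=0
\end{equation*}
subject to $ad>b^2$ and $s^2+u^2=1$.

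I expect this algebraic step to be the main obstacle. I would first rewrite the left-hand side as a sum of squares (or as a quadratic in $a$ with nonpositive discriminant) whose vanishing forces $|s|<|u|$ together with explicit linear relations among $a,b,d$ determined by $t\coloneqq s/u\in(-1,1)$. The aim is to conclude that $\bm{X}$ is a positive multiple of $\bm{X}(t)$.

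As a consistency check, $\bm{X}(t)$ has kernel spanned by $(0,t,1)^\top$, matching $\bm{v}\propto(0,s,u)$. As a fallback, I could use the eigenvalue formula from the proof of \cref{lem:rank3_3_D1_COP}, passed to the limit, to locate the equality case.

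Finally, for the reverse inclusion I would verify three things directly for $t\in(-1,1)$. First, $\bm{X}(t)\in\calS_+^3$: its leading $2\times2$ minor is $1-t^2>0$ and $\det\bm{X}(t)=0$. Second, $\adj(\bm{X}(t))_{11}=X_{22}X_{33}-X_{23}^2=t^2-t^2=0$. Third, $\langle\bm{G},\bm{X}(t)\rangle^2+4\langle\bm{G}^{-1},\adj(\bm{X}(t))\rangle=0$, by substituting into \eqref{eq:qD1}. Together with the trivial inclusion of the rank-one solutions, this gives \eqref{eq:rank3_3_Z1}.
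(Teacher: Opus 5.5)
Your skeleton matches the paper's. Both arguments use that the two pieces of \eqref{eq:qD1} are nonnegative on $\calS_+^3$, exclude rank $3$, and handle rank $\le 1$ via $\adj(\bm{X})=\bm{O}$. In rank $2$, both use $\adj(\bm{X})=\mu\bm{v}\bm{v}^\top$ to get $v_1=0$. Your formulas for $\langle\bm{G},\bm{X}\rangle$ and $\langle\bm{G}^{-1},\adj(\bm{X})\rangle$ are correct, and $F\coloneqq(a-2bs+du^2)^2+4(ad-b^2)(s^2-u^2)$ is the paper's \eqref{eq:qD1_X_rank2} in your coordinates.

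The gap is that the step carrying the content of the lemma is only announced, not proved. That step is showing that $F=0$ together with $ad>b^2$ forces $\bm{X}\in\bbR_+\bm{X}(t)$ for some $\abs{t}<1$. Moreover, it cannot be done the way you describe. The discriminant of $F$ as a quadratic in $a$ is $16(s^2-u^2)(ds-b)^2$, and for $s^2>u^2$ the form $F$ takes negative values; for example, $u=0$, $s=1$, $a=2$, $b=1$, $d=0$ gives $F=-4$. So no sum-of-squares rewriting of $F$ by itself can force $\abs{s}<\abs{u}$. The constraint $ad>b^2$ has to enter, which makes a case split like the paper's $\abs{t}>1$, $\abs{t}=1$, $\abs{t}<1$ unavoidable.

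With that split, your plan closes quickly. Completing the square in $a$ gives
\[
F=\bigl(a-du^2+2s(ds-b)\bigr)^2+4(u^2-s^2)(ds-b)^2 .
\]
\emph{Case $s^2<u^2$.} Here $F=0$ if and only if $b=ds$ and $a=du^2$. Then $a>0$ gives $d>0$, and $\bm{X}=du^2\,\bm{X}(s/u)$ with $s/u\in(-1,1)$.

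\emph{Case $s^2>u^2$ (this includes $u=0$).} The original form gives $F\ge 4(ad-b^2)(s^2-u^2)>0$.

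\emph{Case $s^2=u^2=\tfrac12$.} Here $F=(a+\tfrac d2-2bs)^2>0$, because $a+\tfrac d2\ge\sqrt{2ad}>\sqrt2\abs{b}=\abs{2bs}$.

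The paper first excludes $d_3=0$ by a separate computation. It then settles $\abs{t}<1$ through the equality case of $\tr(\bm{A}\bm{B})\ge 2\sqrt{\det\bm{A}\det\bm{B}}$ on $2\times 2$ positive definite matrices. Your unit-vector parametrization absorbs the case $u=0$ into $s^2>u^2$, and it replaces the AM--GM equality analysis by an explicit identity. Otherwise the two routes are the same.
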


\begin{proof}
We only prove that $Z_1$ is contained in the right-hand side of \eqref{eq:rank3_3_Z1}.
The converse inclusion follows from the positive semidefiniteness of $\bm{X}(t)$ for every $t\in (-1,1)$ and direct calculations.
Let $\bm{X} \in Z_1$, i.e., $\bm{X} \in \calS_+^3$ and $q_{D_1}(\bm{X}) = 0$.
The rank of $\bm{X}$ must be at most $2$; otherwise, $\bm{X}$ is positive definite, and \cref{lem:rank3_3_D1_COP} implies that $q_{D_1}(\bm{X}) > 0$, which contradicts $\bm{X}\in Z_1$.

First, suppose that $\bm{X}$ has rank at most $1$.
Then there exists $\bm{x}\in \bbR^3$ such that $\bm{X} = \bm{x}\bm{x}^\top$.
It follows from \eqref{enum:adj_zero} of \cref{lem:adjX} that $0 = q_{D_1}(\bm{X}) = q_{\bm{G}}(\bm{x})^2$, i.e., $q_{\bm{G}}(\bm{x}) = 0$.
This means that $\bm{X}$ belongs to the right-hand side of \eqref{eq:rank3_3_Z1}.

Second, suppose that $\bm{X}$ has rank $2$.
Then the rank--nullity theorem implies that the dimension of $\Ker(\bm{X})$ is $1$, and we have $\Ker(\bm{X}) = \bbR\bm{d}$ for some nonzero $\bm{d}\in \bbR^3$.
It follows from the discussion in \cite[section~0.8.2]{HJ2013} that $\adj(\bm{X}) = a\bm{d}\bm{d}^\top$ for some $a > 0$.

The vector $\bm{d}$ satisfies the following.
First, it follows that $d_1 = 0$ since, by $q_{D_1}(\bm{X}) = 0$ and the discussion in \cref{lem:rank3_3_D1_COP}, we have
$0 = \bm{e}_1^\top(\adj(\bm{X}))\bm{e}_1 = ad_1^2$.
Second, it follows that $d_3 \neq 0$.
Indeed, to derive a contradiction, we assume that $d_3 = 0$.
Then $\bm{d} = d_2\bm{e}_2$.
Note that $d_2 \neq 0$ by $\bm{d} \neq \bm{0}$.
Since $\bm{X}$ is a positive semidefinite matrix of rank $2$ and $\Ker(\bm{X}) = \bbR\bm{d} = \bbR\bm{e}_2$, the second row and column of $\bm{X}$ are zero and
$\begin{psmallmatrix}
X_{11} & X_{13}\\
X_{13} & X_{33}
\end{psmallmatrix}$ is positive definite; in particular, we have $X_{11}X_{33} - X_{13}^2 > 0$. 
Therefore, \eqref{eq:qD1} leads to
\begin{equation*}
q_{D_1}(\bm{X}) = (X_{11} + 2X_{13})^2 + 4(X_{11}X_{33} - X_{13}^2) > 0,
\end{equation*}
which contradicts $\bm{X} \in Z_1$.
Thus, we conclude that $\bm{d}$ is a nonzero multiple of $(0,t,1)^\top$ for some $t\in \bbR$.
By
\begin{equation}
\Ker(\bm{X}) = \bbR\bm{d} = \bbR(0,t,1)^\top, \label{eq:rank3_3_KerX}
\end{equation}
 we see that
\begin{equation}
\bm{X} = \begin{pmatrix}
X_{11} & X_{12} & -tX_{12}\\
X_{12} & X_{22} & -tX_{22}\\
-tX_{12} & -tX_{22} & t^2X_{22}
\end{pmatrix}. \label{eq:X_rank2}
\end{equation}
Substituting this into \eqref{eq:qD1}, we have
\begin{align}
 &q_{D_1}(\bm{X}) \nonumber\\
={}& (X_{11} + X_{22} - 2tX_{12})^2 - 4(X_{11}X_{22} - X_{12}^2)(1-t^2) \label{eq:qD1_X_rank2}\\
={}& \tr\left(\begin{pmatrix}
X_{11} & X_{12}\\
X_{12} & X_{22}
\end{pmatrix}\begin{pmatrix}
1 & -t\\
-t & 1
\end{pmatrix}\right)^2  - 4\det\begin{pmatrix}
X_{11} & X_{12}\\
X_{12} & X_{22}
\end{pmatrix}\det\begin{pmatrix}
1 & -t\\
-t & 1
\end{pmatrix}. \label{eq:qD1_X_rank2_mat}
\end{align}
We note that since the positive semidefinite matrix $\bm{X}$ shown in \eqref{eq:X_rank2} has rank $2$ and is congruent to $\{0\}\oplus \begin{psmallmatrix}
X_{11} & X_{12}\\
X_{12} & X_{22}
\end{psmallmatrix}$, the matrix $\begin{psmallmatrix}
X_{11} & X_{12}\\
X_{12} & X_{22}
\end{psmallmatrix}$ is positive definite.
In particular,\begin{equation}
X_{11}X_{22} - X_{12}^2 > 0. \label{eq:X11X22_X12}
\end{equation}

If $\abs{t} > 1$, then \eqref{eq:qD1_X_rank2} and \eqref{eq:X11X22_X12} imply that $q_{D_1}(\bm{X}) > 0$, which contradicts $\bm{X} \in Z_1$.
If $\abs{t} = 1$, since
\begin{equation*}
(X_{11} + X_{22})^2 - \abs{2tX_{12}}^2 \ge 4(X_{11}X_{22} - X_{12}^2) > 0
\end{equation*}
holds by \eqref{eq:X11X22_X12}, we see from \eqref{eq:qD1_X_rank2} that
\begin{equation*}
q_{D_1}(\bm{X}) = (X_{11} + X_{22} - 2tX_{12})^2 > 0,
\end{equation*}
which is also a contradiction.
Now, we assume that $\abs{t} < 1$.
Then $\begin{psmallmatrix}
X_{11} & X_{12}\\
X_{12} & X_{22}
\end{psmallmatrix}$ and $\begin{psmallmatrix}
1 & -t\\
-t & 1
\end{psmallmatrix}$ are positive definite.
In general, for $\bm{A},\bm{B}\in \setint\calS_+^2$, the AM--GM-type inequality $\tr(\bm{A}\bm{B}) \ge 2\sqrt{\det(\bm{A})\det(\bm{B})}$ holds with equality if and only if $\bm{A}$ is a positive multiple of $\bm{B}^{-1}$, which can be proved by applying the usual AM--GM inequality to the eigenvalues of $\sqrt{\bm{B}}\bm{A}\sqrt{\bm{B}}$.
By setting $\bm{A}$ and $\bm{B}$ to $\begin{psmallmatrix}
X_{11} & X_{12}\\
X_{12} & X_{22}
\end{psmallmatrix}$ and $\begin{psmallmatrix}
1 & -t\\
-t & 1
\end{psmallmatrix}$ in that formula, respectively, we see from \eqref{eq:qD1_X_rank2_mat} that $q_{D_1}(\bm{X}) \ge 0$ with equality if and only if $\begin{psmallmatrix}
X_{11} & X_{12}\\
X_{12} & X_{22}
\end{psmallmatrix}$ is a positive multiple of $\begin{psmallmatrix}
1 & -t\\
-t & 1
\end{psmallmatrix}^{-1} = \frac{1}{1-t^2}\begin{psmallmatrix}
1 & t\\
t & 1
\end{psmallmatrix}$.
By $q_{D_1}(\bm{X}) = 0$, there exists $b > 0$ such that $\begin{psmallmatrix}
X_{11} & X_{12}\\
X_{12} & X_{22}
\end{psmallmatrix} = b\begin{psmallmatrix}
1 & t\\
t & 1
\end{psmallmatrix}$.
Substituting this into \eqref{eq:X_rank2}, we obtain $\bm{X} = b\bm{X}(t) \in  \bbR_+\bm{X}(t)$.
\end{proof}

For $t\in \bbR$, we define
\begin{equation*}
\bm{y}(t) \coloneqq (2,2t,-1-t^2)^\top.
\end{equation*}
Then the vectors generating the first component of the right-hand side of \eqref{eq:rank3_3_Z1} can be described as
\begin{equation}
\{\bm{x}\in \bbR^3 \mid q_{\bm{G}}(\bm{x}) = 0\} = \bbR\{\bm{y}(t) \mid t\in \bbR\} \cup \{(0,0,x_3)^\top \mid x_3\in \bbR\}. \label{eq:zero_qG}
\end{equation}
We only prove that the left-hand side of \eqref{eq:zero_qG} is contained in the right-hand side because we can check the converse inclusion by direct calculations.
Let $\bm{x}\in \bbR^3$ be such that
\begin{equation}
q_{\bm{G}}(\bm{x}) = x_1^2 + x_2^2 + 2x_1x_3 = 0. \label{eq:qGx_0}
\end{equation}
First, we assume that $x_1 = 0$.
Since $x_2 = 0$ by \eqref{eq:qGx_0}, we have $\bm{x} = (0,0,x_3)^\top$, so it belongs to the right-hand side of \eqref{eq:zero_qG}.
Second, we assume that $x_1 \neq 0$.
Letting $t \coloneqq x_2 / x_1$, we have $\bm{x} = \tfrac{x_1}{2}\bm{y}(t)$, which also belongs to the right-hand side of \eqref{eq:zero_qG}.

Next, we construct an exposed face of $\CP(Z_1)$.
Let
\begin{equation*}
\bm{H} \coloneqq \begin{pmatrix}
5 & 10 & 13 \\
10 & -5 & 16\\
13 & 16 & 32
\end{pmatrix}
\end{equation*}
and let $D_2$ be the element in $\calS(\calS^3)$ such that the associated quadratic form satisfies
\begin{equation*}
q_{D_2}(\bm{X}) = \langle\bm{H},\bm{X}\rangle^2 - 108\adj(\bm{X})_{22}.
\end{equation*}

\begin{lemma}
We have $D_2 \in \COP(Z_1)$.
Furthermore, for $t\in \bbR$, we define $\bm{Y}(t) \coloneqq \bm{y}(t)\bm{y}(t)^\top$.
Then $Z_{Z_1}(D_2)$, denoted by $Z_2$ for simplicity, is given by
\begin{equation}
Z_2 = \bbR_+\{\bm{Y}(-\tfrac{1}{2}),\bm{Y}(0),\bm{Y}(\tfrac{1}{2}),\bm{Y}(2),\bm{X}(0),\bm{X}(\tfrac{1}{2})\}, \label{eq:rank3_3_Z2}
\end{equation}
where $\bm{X}(\cdot)$ is defined in \eqref{eq:def_Xt}.
\end{lemma}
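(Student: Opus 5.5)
The plan is to use the explicit description of $Z_1$ from \cref{lem:rank3_3_Z1} together with \eqref{eq:zero_qG}, and to evaluate $q_{D_2}$ directly on each family of generators of $Z_1$. Since $q_{D_2}$ is homogeneous of degree $2$, it suffices to evaluate it on one representative of each ray. Every element of $Z_1$ is then of one of three types:
\begin{enumerate}[label=(\alph*)]
\item $c\,\bm{Y}(t)$ with $c\ge 0$ and $t\in\bbR$, since $\bm{x}\bm{x}^\top$ is unchanged when $\bm{x}$ is replaced by $-\bm{x}$, so the scalar in $\bbR\bm{y}(t)$ only rescales $\bm{Y}(t)$;
\item $c\,\bm{e}_3\bm{e}_3^\top$ with $c\ge 0$;
\item $c\,\bm{X}(t)$ with $c\ge 0$ and $t\in(-1,1)$.
\end{enumerate}
Both claims, $D_2\in\COP(Z_1)$ and the formula \eqref{eq:rank3_3_Z2}, will follow once I show that $q_{D_2}$ is nonnegative on each family and identify exactly where it vanishes.

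For the rank-one families (a) and (b), \eqref{enum:adj_zero} of \cref{lem:adjX} gives $q_{D_2}(\bm{x}\bm{x}^\top)=q_{\bm{H}}(\bm{x})^2\ge 0$, with equality exactly when $q_{\bm{H}}(\bm{x})=0$. For $\bm{x}=\bm{e}_3$ this equals $H_{33}^2=32^2>0$, so $\bm{e}_3\bm{e}_3^\top\notin Z_2$. For $\bm{x}=\bm{y}(t)$, the function $q_{\bm{H}}(\bm{y}(t))$ is a quartic polynomial in $t$ with leading coefficient $H_{33}=32$. I expect it to factor as
\begin{equation*}
q_{\bm{H}}(\bm{y}(t)) = 32\bigl(t+\tfrac12\bigr)t\bigl(t-\tfrac12\bigr)(t-2).
\end{equation*}
As a sanity check, at $t=0$ we get $20-52+32=0$. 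Confirming this factorization requires only a routine expansion, and it shows that the zeros in family (a) are exactly $\bm{Y}(\pm\tfrac12)$, $\bm{Y}(0)$ and $\bm{Y}(2)$.

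For family (c), I compute directly from \eqref{eq:def_Xt}. The inner product is
\begin{equation*}
\langle\bm{H},\bm{X}(t)\rangle = 5+5t^2\cdot 0 + 20t - 26t^2 - 32t + 32t^2 - 5 = 6t^2-12t = 6t(t-2),
\end{equation*}
and from \eqref{eq:adjX_entries},
\begin{equation*}
\adj(\bm{X}(t))_{22} = X_{11}X_{33}-X_{13}^2 = t^2-t^4 = t^2(1-t^2).
\end{equation*}
Substituting these gives
\begin{equation*}
q_{D_2}(\bm{X}(t)) = 36t^2\bigl((t-2)^2-3(1-t^2)\bigr) = 36t^2(2t-1)^2 \ge 0.
\end{equation*}
This vanishes on $(-1,1)$ exactly at $t=0$ and $t=\tfrac12$.

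Combining the three families, $q_{D_2}\ge 0$ on all of $Z_1$, so $D_2\in\COP(Z_1)$. Moreover, the zero set $Z_{Z_1}(D_2)$ is exactly the union of the rays generated by the six listed matrices, which is \eqref{eq:rank3_3_Z2}. The only step with any real risk is the quartic factorization in family (a). There the plan is to expand $q_{\bm{H}}(\bm{y}(t))$ term by term and compare coefficients with $32(t^2-\tfrac14)(t^2-2t)$. The remaining work is bookkeeping: checking that the three families exhaust the rays of $Z_1$, which is immediate from \cref{lem:rank3_3_Z1} and \eqref{eq:zero_qG}.
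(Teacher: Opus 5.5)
Your proposal is correct and follows the paper's proof essentially step for step. On the rank-one part of $Z_1$ you use $q_{D_2}(\bm{x}\bm{x}^\top)=q_{\bm{H}}(\bm{x})^2$ with $q_{\bm{H}}(x_3\bm{e}_3)=32x_3^2$ and $q_{\bm{H}}(\bm{y}(t))=32t^4-64t^3-8t^2+16t=32(t+\tfrac{1}{2})t(t-\tfrac{1}{2})(t-2)$ (this factorization checks out), and on the rank-two part you get $q_{D_2}(a\bm{X}(t))=36a^2t^2(2t-1)^2$, exactly as the paper does; the stray term $5t^2\cdot 0$ in your expansion of $\langle\bm{H},\bm{X}(t)\rangle$ is harmless, since the result $6t(t-2)$ is correct.
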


\begin{proof}
In the first step, we show that $D_2 \in \COP(Z_1)$.
Let $\bm{X} \in Z_1$.
It follows from \cref{lem:rank3_3_Z1} that $\bm{X}$ belongs to the first or second component on the right-hand side of \eqref{eq:rank3_3_Z1}.
First, suppose that $\bm{X} = \bm{x}\bm{x}^\top$ for some $\bm{x}\in \bbR^3$ with $q_{\bm{G}}(\bm{x}) = 0$.
By \eqref{enum:adj_zero} of \cref{lem:adjX}, we see that
\begin{equation}
q_{D_2}(\bm{X}) = q_{\bm{H}}(\bm{x})^2 \ge 0. \label{eq:rank3_3_X_rank1}
\end{equation}
Second, suppose that $\bm{X} = a\bm{X}(t)$ for some $a\ge 0$ and $t\in (-1,1)$.
Then we have
\begin{equation}
q_{D_2}(\bm{X}) = 36a^2t^2(2t-1)^2 \ge 0. \label{eq:rank3_3_X_rank2}
\end{equation}
Therefore, we obtain $D_2 \in \COP(Z_1)$.

In the second step, to prove \eqref{eq:rank3_3_Z2}, we only show that $Z_2$ is contained in the right-hand side of \eqref{eq:rank3_3_Z2} because the converse inclusion follows from \cref{lem:rank3_3_Z1}, \eqref{eq:zero_qG}, and direct calculations.
First, if the inequality in \eqref{eq:rank3_3_X_rank1} holds with equality, we have $q_{\bm{H}}(\bm{x}) = 0$.
By $q_{\bm{G}}(\bm{x}) = 0$ and \eqref{eq:zero_qG}, $\bm{x}$ belongs to the first or second component on the right-hand side of \eqref{eq:zero_qG}.
If $\bm{x} = (0,0,x_3)^\top$, then since $q_{\bm{H}}(\bm{x}) = 32x_3^2$, the equality $q_{\bm{H}}(\bm{x}) = 0$ implies $\bm{x} = \bm{0}$.
When $\bm{x} = \bm{0}$,  we have $\bm{X} = \bm{O}$, which belongs to the right-hand side of \eqref{eq:rank3_3_Z2}.
If $\bm{x} = a\bm{y}(t)$ for some $a \in \bbR$ and $t\in \bbR$, then since
\begin{equation}
q_{\bm{H}}(\bm{y}(t)) = 32\left(t + \frac{1}{2}\right)t\left(t - \frac{1}{2}\right)\left(t - 2\right), \label{eq:xC1x}
\end{equation}
the equality $q_{\bm{H}}(\bm{x}) = 0$ implies $a = 0$ or $t\in \{-\tfrac{1}{2},0,\tfrac{1}{2},2\}$; in either case, $\bm{X}$ belongs to the right-hand side of \eqref{eq:rank3_3_Z2}.
Second, if the inequality in \eqref{eq:rank3_3_X_rank2} holds with equality, we have $a = 0$ or $t \in \{0,\tfrac{1}{2}\}$; in either case, $\bm{X}$ belongs to the right-hand side of \eqref{eq:rank3_3_Z2}.
Thus, we obtain \eqref{eq:rank3_3_Z2}.
\end{proof}

We see from \cref{lem:CP_exposed_face} that
\begin{equation}
\CP(Z_1) \cap \{D_2\}^\perp = \CP(Z_2). \label{eq:CPZ2_exposed_by_D2}
\end{equation}
Since $\CP(Z_1)$ is a face of $\CP(\calS_+^3)$, by \eqref{enum:face_iff} of \cref{thm:nonexposed}, $\CP(Z_2)$ is a face of $\CP(\calS_+^3)$.

\subsubsection{Non-exposedness of $\CP(Z_2)$}\label{subsubsec:CP_PSD3_step2}
We begin by establishing the following lemma, which is crucial for proving that the face $\CP(Z_2)$ of $\CP(\calS_+^3)$ is not exposed.
\begin{lemma}\label{lem:COPPSD3_cap_CPZ2perp}
It follows that
\begin{equation*}
\COP(\calS_+^3) \cap \CP(Z_2)^\perp\\
\subseteq \{\bm{x}\bm{x}^\top\otimes \bm{x}\bm{x}^\top \mid \text{$\bm{x}\in \bbR^3$ satisfies $q_{\bm{G}}(\bm{x}) = 0$}\}^\perp. 
\end{equation*}
\end{lemma}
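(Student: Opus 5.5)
The plan is to reduce the statement to a statement about a single nonnegative polynomial in one variable. Fix $D\in \COP(\calS_+^3)\cap \CP(Z_2)^\perp$. Since $Z_2$ is generated by $\bm{Y}(-\tfrac12),\bm{Y}(0),\bm{Y}(\tfrac12),\bm{Y}(2),\bm{X}(0),\bm{X}(\tfrac12)$, the condition $D\perp\CP(Z_2)$ is equivalent to $q_D$ vanishing at these six matrices. Moreover $q_D\ge 0$ on $\calS_+^3$, so each of them is a global minimizer of $q_D$ over $\calS_+^3$.

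\textbf{First-order conditions.} At a minimizer $\bm{Z}$ of a quadratic form over the self-dual cone $\calS_+^3$, we must have
\begin{equation*}
\nabla q_D(\bm{Z})=2D(\bm{Z})\in \calS_+^3 \quad\text{and}\quad \langle D(\bm{Z}),\bm{Z}\rangle=0 .
\end{equation*}
\begin{itemize}
\item At $\bm{X}(0)=\mathrm{diag}(1,1,0)$ this forces $D(\bm{X}(0))=m_0\,\bm{e}_3\bm{e}_3^\top$ with $m_0\ge 0$.
\item At $\bm{X}(\tfrac12)$, whose kernel is $\bbR(0,\tfrac12,1)^\top$ by \eqref{eq:rank3_3_KerX}, it forces $D(\bm{X}(\tfrac12))=m_1\bm{k}\bm{k}^\top$ with $\bm{k}=(0,\tfrac12,1)^\top$ and $m_1\ge 0$.
\item At each $\bm{Y}(t_i)$ it forces $D(\bm{Y}(t_i))\succeq 0$ with $D(\bm{Y}(t_i))\bm{y}(t_i)=\bm{0}$.
\end{itemize}
These give a finite list of linear equalities on $D$, together with some sign conditions.

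\textbf{The curve polynomial.} Next, consider the quartic form $f(\bm{x})\coloneqq q_D(\bm{x}\bm{x}^\top)$, which is nonnegative on $\bbR^3$. By \eqref{eq:zero_qG}, the target statement says exactly that $f$ vanishes on the cone $\{q_{\bm{G}}=0\}$. That cone is parametrized by $\bm{y}(t)$ together with the limit direction $\bm{e}_3$. So define
\begin{equation*}
g(t)\coloneqq f(\bm{y}(t)),
\end{equation*}
a nonnegative polynomial of degree at most $8$ whose coefficient of $t^8$ is $f(\bm{e}_3)$. It suffices to show $g\equiv 0$, since then $f(\bm{e}_3)=0$ also follows. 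Because $g\ge 0$ and $g(t_i)=0$ for $t_i\in\{-\tfrac12,0,\tfrac12,2\}$, each $t_i$ is a root of even multiplicity. Hence
\begin{equation*}
g(t)=c\,\Bigl(\bigl(t+\tfrac12\bigr)t\bigl(t-\tfrac12\bigr)(t-2)\Bigr)^2=c'\,q_{\bm{H}}(\bm{y}(t))^2
\end{equation*}
for some $c'\ge 0$, using \eqref{eq:xC1x}. This is exactly why $\bm{H}$ was chosen with those four roots, so the whole statement reduces to proving $c'=0$.

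\textbf{Forcing $c'=0$ (main obstacle).} I will use the two rank-two zeros. The idea is to relate the values $f(\bm{y}(t))$ to the bilinear quantities $\langle D(\bm{X}(0)),\cdot\rangle$ and $\langle D(\bm{X}(\tfrac12)),\cdot\rangle$, which the first-order conditions pin down to multiples of $\bm{e}_3\bm{e}_3^\top$ and $\bm{k}\bm{k}^\top$. Concretely:
\begin{enumerate}
\item Write $\bm{X}(0)=\bm{e}_1\bm{e}_1^\top+\bm{e}_2\bm{e}_2^\top$, and more generally $\bm{X}(0)=\bm{u}\bm{u}^\top+\bm{v}\bm{v}^\top$ for any orthonormal pair $\bm{u},\bm{v}$ of $\setspan\{\bm{e}_1,\bm{e}_2\}$.
\item Expand $0=q_D(\bm{X}(0))=f(\bm{u})+f(\bm{v})+2\langle D(\bm{u}\bm{u}^\top),\bm{v}\bm{v}^\top\rangle$.
\item Combine this with $\langle D(\bm{X}(0)),\bm{u}\bm{u}^\top\rangle=0$ for every $\bm{u}\in\setspan\{\bm{e}_1,\bm{e}_2\}$ to conclude that $f$ vanishes identically on the plane $x_3=0$.
\item Do the same on the plane $\bm{k}^\perp$ using $\bm{X}(\tfrac12)$.
\end{enumerate}
The conic $\{q_{\bm{G}}=0\}$ meets the plane $x_3=0$ only in $\bm{0}$, so the vanishing on these planes must be transported to the conic through the quartic $f$. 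For this I plan to use the remaining linear conditions, namely $D(\bm{Y}(t_i))\bm{y}(t_i)=\bm{0}$ and the gradient formulas of \cref{lem:adjX}\eqref{enum:B_adjX_nabla} (where $D$ is decomposed as in the definitions of $D_1,D_2$). The aim is to express $c'$ as a linear combination of quantities already shown to vanish.

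I expect this last step to be the real difficulty, because it requires an explicit computation in the $21$-dimensional space $\calS(\calS^3)$. My first attempt will be to evaluate $g$ at one additional convenient parameter, such as $t=1$ or the limit $t\to\infty$ (that is, $f(\bm{e}_3)$). I will try to show that this value is determined to be $0$ by the vanishing of $f$ on $x_3=0$ and on $\bm{k}^\perp$ together with the kernel conditions at the $\bm{y}(t_i)$. If that succeeds, then $c'=0$, hence $g\equiv 0$ and $f(\bm{e}_3)=0$, which is the claimed inclusion.
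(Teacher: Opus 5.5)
Your first-order conditions are correct, and so is the reduction $g=c'\,(q_{\bm H}\circ\bm y)^2$, which neatly handles the conic without Hilbert's theorem. The mechanism you propose for forcing $c'=0$, however, rests on a false claim. Step~3 asserts that $f$ vanishes identically on the plane $x_3=0$, and Step~4 asserts the same on $\bm k^\perp$. Take $D=D_1$. It lies in $\COP(\calS_+^3)\cap\CP(Z_2)^\perp$ because $Z_2\subseteq Z_1$. By \cref{lem:adjX}\eqref{enum:adj_zero} its quartic is $f=q_{\bm G}^2$, so $f(\bm e_1)=1$, although $\bm e_1$ lies in both planes. What Steps~1--2 actually yield is $f(\bm u)=f(\bm v)=-\langle D(\bm u\bm u^\top),\bm v\bm v^\top\rangle$; for $D_1$ with $\bm u=\bm e_1$, $\bm v=\bm e_2$ this reads $1=1=-(-1)$. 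The cross term is not a value of $f$. It depends on the component of $D$ in the six-dimensional space $\{\langle\bm B,\adj(\bm X)\rangle\mid \bm B\in\calS^3\}$, which is invisible on rank-one matrices. Your final step is also left as an unspecified computation, so no argument for $c'=0$ is actually given.

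The missing idea is that knowing $f$ only on the conic is too little to exploit the first-order conditions at $\bm X(0)$ and $\bm X(\tfrac12)$. There, $\nabla q_D$ involves $f$ off the conic as well as the adjugate part of $D$.

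The paper therefore first pins $f$ down globally. By Hilbert's theorem $f=\sum_i p_i^2$, where each quadratic $p_i$ vanishes at the four points $\bm y(t_i)$. Hence $p_i\in\setspan\{q_{\bm G},q_{\bm H}\}$ and $f=\alpha q_{\bm G}^2+2\beta q_{\bm G}q_{\bm H}+\gamma q_{\bm H}^2$. You could reach the same form from your $g$: $f-c'q_{\bm H}^2$ vanishes on the conic, so $f=c'q_{\bm H}^2+q_{\bm G}\,r$. Your kernel conditions $D(\bm Y(t_i))\bm y(t_i)=\bm 0$, i.e.\ $\nabla f(\bm y(t_i))=\bm 0$, then force $r(\bm y(t_i))=0$, hence $r\in\setspan\{q_{\bm G},q_{\bm H}\}$.

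The paper then lifts this form, using that $\{\langle\bm B,\adj(\bm X)\rangle\}$ is exactly the kernel of $q\mapsto q(\bm x\bm x^\top)$, to $q_D(\bm X)=\alpha\langle\bm G,\bm X\rangle^2+2\beta\langle\bm G,\bm X\rangle\langle\bm H,\bm X\rangle+\gamma\langle\bm H,\bm X\rangle^2+\langle\bm B,\adj(\bm X)\rangle$. Only then does it use that the first row of $\nabla q_D$ vanishes at $\bm X(0)$ and $\bm X(\tfrac12)$. At $\bm X(0)$ the $(1,2)$ entry equals $40\beta$. At $\bm X(\tfrac12)$ the combination $-(\cdot)_{11}-2(\cdot)_{12}+(\cdot)_{13}$ equals $108\gamma$, with all $\bm B$-contributions cancelling by \cref{lem:adjX}\eqref{enum:B_adjX_nabla}. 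Thus $\beta=\gamma=0$, and your $c'$ is exactly $\gamma$.
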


\begin{proof}
Let $A \in \COP(\calS_+^3) \cap \CP(Z_2)^\perp$.
By $A \in \CP(Z_2)^\perp$ and \eqref{eq:rank3_3_Z2}, we have
\begin{equation}
q_A(\bm{X}) = 0 \text{ for all $\bm{X} \in \{\bm{Y}(-\tfrac{1}{2}),\bm{Y}(0),\bm{Y}(\tfrac{1}{2}),\bm{Y}(2),\bm{X}(0),\bm{X}(\tfrac{1}{2})\}$.}
\label{eq:qA_eq_0}
\end{equation}
Let $p(\bm{x}) \coloneqq q_A(\bm{x}\bm{x}^\top)$.
Since $p$ belongs to $H_4(\bbR^3)$ and $p(\bm{x}) \ge 0$ for all $\bm{x}\in \bbR^3$ by $A\in \COP(\calS_+^3)$, it follows from \cite{Hil1888} that $p$ is a sum of squares; there exist $p_1,\dots,p_k\in H_2(\bbR^3)$ such that
\begin{equation}
p(\bm{x}) = \sum_{i=1}^k p_i(\bm{x})^2. \label{eq:p_sos}
\end{equation}

Every $p_i$ for $i \in \{1,\dots,k\}$ belongs to $\setspan\{q_{\bm{G}},q_{\bm{H}}\}$.
It is sufficient to show this for $i = 1$.
By $q_A(\bm{Y}(t)) = p(\bm{y}(t))$, \eqref{eq:qA_eq_0}, and \eqref{eq:p_sos}, we have $(p_1\circ \bm{y})(t) = 0$ for all $t\in \{-\tfrac{1}{2},0,\tfrac{1}{2},2\}$.
By the factor theorem, there exists $\lambda \in \bbR$ such that
\begin{equation*}
(p_1\circ \bm{y})(t) = \lambda\left(t + \frac{1}{2}\right)t\left(t-\frac{1}{2}\right)(t-2) = \frac{\lambda}{32}(q_{\bm{H}}\circ \bm{y})(t),
\end{equation*}
where we use \eqref{eq:xC1x} to derive the second equality.
Let $\bm{C} \in \calS^3$ be such that $q_{\bm{C}} = p_1 - \tfrac{\lambda}{32} q_{\bm{H}}$.
Then we see that $(q_{\bm{C}}\circ \bm{y})(t)$ is the zero polynomial.
Calculating $(q_{\bm{C}}\circ \bm{y})(t)$ yields the following identity:
\begin{equation*}
C_{33}t^4 -4C_{23}t^3 + 2(2C_{22} + C_{33} - 2C_{13})t^2 + 4(2C_{12} - C_{23})t + 4C_{11} + C_{33} - 4C_{13} = 0.
\end{equation*}
Comparing the coefficients yields $C_{11} = C_{22} = C_{13}$, denoted by $c \in \bbR$, and $C_{12} = C_{23} = C_{33} = 0$.
Therefore, we obtain $q_{\bm{C}}(\bm{x}) = c(x_1^2 + x_2^2 + 2x_1x_3) = c q_{\bm{G}}(\bm{x})$, and $p_1 = c q_{\bm{G}} + \frac{\lambda}{32} q_{\bm{H}} \in \setspan\{q_{\bm{G}},q_{\bm{H}}\}$.

For each $i \in \{1,\dots,k\}$, there exist $a_i,b_i\in \bbR$ such that $p_i = a_iq_{\bm{G}} + b_iq_{\bm{H}}$.
We define
\begin{equation}
\begin{pmatrix}
\alpha & \beta\\
\beta & \gamma
\end{pmatrix} \coloneqq \sum_{i=1}^k\begin{pmatrix}
a_i\\
b_i
\end{pmatrix}\begin{pmatrix}
a_i\\
b_i
\end{pmatrix}^\top \in \calS_+^2. \label{eq:def_abg}
\end{equation}
Then \eqref{eq:p_sos} leads to
\begin{equation}
p(\bm{x}) = \alpha q_{\bm{G}}(\bm{x})^2 + 2\beta q_{\bm{G}}(\bm{x})q_{\bm{H}}(\bm{x}) + \gamma q_{\bm{H}}(\bm{x})^2. \label{eq:p_gram}
\end{equation}

Using $\alpha$, $\beta$, and $\gamma$ defined in \eqref{eq:def_abg}, we can represent $q_A$ as
\begin{equation}
q_A(\bm{X}) = \alpha\langle\bm{G},\bm{X}\rangle^2 + 2\beta\langle\bm{G},\bm{X}\rangle\langle\bm{H},\bm{X}\rangle + \gamma\langle\bm{H},\bm{X}\rangle^2 + \langle\bm{B},\adj(\bm{X})\rangle \label{eq:qAX}
\end{equation}
for some $\bm{B} \in \calS^3$.
Indeed, let $f\colon H_2(\calS^3) \to H_4(\bbR^3)$ be the linear mapping sending a polynomial $q(\bm{X})$ to the polynomial $q(\bm{x}\bm{x}^\top)$ in $\bm{x}$.
We see that $f$ is surjective, since every monomial $x_ix_jx_kx_l$ for $1\le i\le j\le k\le l\le 3$ is realized by $f(X_{ij}X_{kl})$.
In addition, the dimensions of $H_2(\calS^3)$ and $H_4(\bbR^3)$ are $21$ and $15$, respectively.
The rank--nullity theorem implies that the dimension of $\Ker(f)$, the kernel of $f$, is $6$.
Since the set $\{(\adj(\bm{X}))_{ij} \mid 1\le i\le j\le 3\}$ consists of six elements in $\Ker(f)$ by \eqref{enum:adj_zero} of \cref{lem:adjX} and is linearly independent by \eqref{enum:adj_linearly_indep} of \cref{lem:adjX}, it is a basis for $\Ker(f)$.
In other words, we obtain the following equation:
\begin{equation}
\Ker(f) = \{\langle\bm{B},\adj(\bm{X})\rangle \mid \bm{B} \in \calS^3\}. \label{eq:Kerf}
\end{equation}
Now, we define
\begin{equation*}
q(\bm{X}) \coloneqq q_A(\bm{X}) - (\alpha\langle\bm{G},\bm{X}\rangle^2 + 2\beta\langle\bm{G},\bm{X}\rangle\langle\bm{H},\bm{X}\rangle + \gamma\langle\bm{H},\bm{X}\rangle^2).
\end{equation*}
It follows from the definition of $p$ and \eqref{eq:p_gram} that
\begin{equation*}
q(\bm{x}\bm{x}^\top) = p(\bm{x}) - (\alpha q_{\bm{G}}(\bm{x})^2 + 2\beta q_{\bm{G}}(\bm{x})q_{\bm{H}}(\bm{x}) + \gamma q_{\bm{H}}(\bm{x})^2) = 0,
\end{equation*}
i.e., $q\in \Ker(f)$.
Combining this with \eqref{eq:Kerf}, we obtain \eqref{eq:qAX}.

Both $\beta$ and $\gamma$ defined in \eqref{eq:def_abg} are in fact $0$.
To see this, fix $t\in \{0,\tfrac{1}{2}\}$ arbitrarily.
Since $q_A(\bm{X})$ is nonnegative on $\calS_+^3$ and it attains $0$ at $\bm{X} = \bm{X}(t)$ by \eqref{eq:qA_eq_0}, the gradient of $q_A$ at $\bm{X}(t)$, i.e., $\nabla q_A(\bm{X}(t))$, satisfies
\begin{align}
-\nabla q_A(\bm{X}(t)) &\overset{\scriptsize \text{(a)}}\in \{\bm{Y} \in \calS^3 \mid \langle\bm{Y},\bm{X}-\bm{X}(t)\rangle \le 0 \text{ for all $\bm{X}\in \calS_+^3$}\} \label{eq:prod_qAXt_Xt}\\
& \overset{\scriptsize \text{(b)}}= -\calS_+^3 \cap \{\bm{X}(t)\}^\perp \nonumber\\
& \overset{\scriptsize \text{(c)}}= -\{\bm{Y}\in \calS_+^3 \mid \bm{Y}\bm{X}(t) = \bm{X}(t)\bm{Y} = \bm{O}\},\nonumber
\end{align}
where we use the first-order optimality condition~\cite[Theorem~27.4]{Roc1970} to derive (a), use the formula for the normal cone of $\calS_+^3$~\cite[Theorem~2.1]{Fle1985} to derive (b), and use $\bm{X}(t) \in \calS_+^3$ to derive (c).
It follows from \eqref{eq:prod_qAXt_Xt} and $\Ker(\bm{X}(t)) = \bbR(0,t,1)^\top$ (see \eqref{eq:rank3_3_KerX}) that $\nabla q_A(\bm{X}(t))$ is a (nonnegative) multiple of $\begin{psmallmatrix}
0 & 0 & 0\\
0 & t^2 & t\\
0 & t & 1
\end{psmallmatrix}$.
In particular, it follows that\begin{equation}
(\nabla q_A(\bm{X}(t)))_{1j} = 0 \text{ for all $j \in \{1,2,3\}$.} \label{eq:nabla_qA_0}
\end{equation}
From \eqref{eq:qAX}, we have
\begin{multline}
\label{eq:nabla_qAX_calc} \nabla q_A(\bm{X})\\
= 2(\alpha\langle\bm{G},\bm{X}\rangle + \beta\langle\bm{H},\bm{X}\rangle)\bm{G} + 2(\beta\langle\bm{G},\bm{X}\rangle + \gamma\langle\bm{H},\bm{X}\rangle)\bm{H} + \nabla\langle\bm{B},\adj(\bm{X})\rangle. 
\end{multline}
We substitute $\bm{X}(0)$ for $\bm{X}$ in \eqref{eq:nabla_qAX_calc}.
It follows from \cref{eq:nabla_qA_0} and \eqref{enum:B_adjX_nabla} of \cref{lem:adjX} that $0 = (\nabla q_A(\bm{X}(0)))_{12} = 40\beta$, which implies that $\beta = 0$.
With this, by substituting $\bm{X}(\tfrac{1}{2})$ for $\bm{X}$ in \eqref{eq:nabla_qAX_calc}, we see again from \eqref{eq:nabla_qA_0} and \eqref{enum:B_adjX_nabla} of \cref{lem:adjX} that
\begin{equation*}
0 = -(\nabla q_A(\bm{X}(\tfrac{1}{2})))_{11} - 2(\nabla q_A(\bm{X}(\tfrac{1}{2})))_{12} + (\nabla q_A(\bm{X}(\tfrac{1}{2})))_{13} = 108\gamma,
\end{equation*}
which implies that $\gamma = 0$.

It follows from \eqref{eq:p_gram} and $\beta = \gamma = 0$ that $p(\bm{x}) = \alpha q_{\bm{G}}(\bm{x})^2$.
Using this equality, for any $\bm{x}\in \bbR^3$ with $q_{\bm{G}}(\bm{x}) = 0$, we have
\begin{equation*}
\langle A,\bm{x}\bm{x}^\top\otimes \bm{x}\bm{x}^\top\rangle = q_A(\bm{x}\bm{x}^\top) = p(\bm{x}) = 0.
\end{equation*}
Therefore, we obtain the desired result.
\end{proof}

We are ready to show the non-exposedness of $\CP(Z_2)$.
To derive a contradiction, we assume that $\CP(Z_2)$ is an exposed face of $\CP(\calS_+^3)$.
Then there exists $D\in \COP(\calS_+^3)$ such that $\CP(Z_2) = \CP(\calS_+^3) \cap \{D\}^\perp$.
Since $D\in \COP(\calS_+^3) \cap \CP(Z_2)^\perp$, it follows from \cref{lem:COPPSD3_cap_CPZ2perp} that
\begin{equation}
\{\bm{x}\bm{x}^\top\otimes \bm{x}\bm{x}^\top \mid \text{$\bm{x}\in \bbR^3$ satisfies $q_{\bm{G}}(\bm{x}) = 0$}\} \subseteq \CP(\calS_+^3) \cap \{D\}^\perp = \CP(Z_2).  \label{eq:qG_zero_outer_prod}
\end{equation}
Let $\bm{x}^* \coloneqq (2,2,-2)^\top$ and $\bm{X}^* \coloneqq \bm{x}^*(\bm{x}^*)^\top$.
It follows from $q_{\bm{G}}(\bm{x}^*) = 0$ and \eqref{eq:qG_zero_outer_prod} that $\bm{X}^*\otimes \bm{X}^* \in \CP(Z_2)$.
However, $\bm{X}^*$ satisfies
\begin{equation*}
\langle D_2,\bm{X}^*\otimes \bm{X}^*\rangle = q_{D_2}(\bm{X}^*) = ((\bm{x}^*)^\top\bm{H}\bm{x}^*)^2 = 576 > 0,
\end{equation*}
where we use \eqref{enum:adj_zero} of \cref{lem:adjX} to derive the second equality.
By \eqref{eq:CPZ2_exposed_by_D2}, we have $\bm{X}^*\otimes \bm{X}^* \not\in \CP(Z_2)$, which is a contradiction.
Thus, $\CP(Z_2)$ is a non-exposed face of $\CP(\calS_+^3)$, and this completes the proof of \cref{lem:CP_PSD3}.

\section{Case of rank $4$}\label{sec:rank4}
Let $E$ be an EJA of rank $4$.
We classify the facial exposedness of $\CP(E_+)$ depending on whether $E$ is Jordan-isomorphic to the Hadamard EJA $\bbR^4$.

First, we discuss the case where $E$ is not Jordan-isomorphic to $\bbR^4$.
 
\begin{lemma}\label{lem:rank4_not_HEJA_expface}
Let $(E,\circ)$ be an EJA of rank $4$ that is not Jordan-isomorphic to the Hadamard EJA $\bbR^4$.
Then $\CP(E_+)$ has an exposed face that is linearly isomorphic to $\CP(\bbR_+^2\times L_+^n)$ for some $n\ge 3$.
\end{lemma}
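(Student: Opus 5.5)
The plan is to find a subalgebra $E'$ of $E$ that is Jordan-isomorphic to the direct product of the Hadamard EJA $\bbR^2$ and a Jordan spin algebra $L^n$ with $n\ge 3$. Once we have it, \cref{lem:subalg_expface} shows that $\CP(E'_+)$ is an exposed face of $\CP(E_+)$. By \cref{cor:CP_isom_Jordan}, $\CP(E'_+)$ is linearly isomorphic to $\CP((\bbR^2\times L^n)_+) = \CP(\bbR_+^2\times L_+^n)$.

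We decompose $E$ into pairwise orthogonal simple subalgebras $E_1,\dots,E_k$ with ranks $r_1\le\dots\le r_k$ summing to $4$. Since $E$ is not Jordan-isomorphic to $\bbR^4$, the tuple $(1,1,1,1)$ is excluded; otherwise \eqref{enum:EJA_isom_rank1} of \cref{thm:EJA_isom} would make $E$ Jordan-isomorphic to $\bbR^4$. The remaining cases are $(1,1,2)$, $(2,2)$, $(1,3)$ and $(4)$. In every case some $E_i$ has rank at least $2$. The idea is to find, inside the simple components, a Jordan frame $c_1,c_2,c_3,c_4$ of $E$ such that $c_3,c_4$ lie in a single simple component $E_j$ and $c_1,c_2$ are orthogonal to $E_j$ or lie in other components. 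We then take
\[
E' \coloneqq \bbR c_1 \oplus \bbR c_2 \oplus E_j(c_3+c_4,1),
\]
where $E_j(c_3+c_4,1)$ is the Peirce $1$-space of the idempotent $c_3+c_4$ in $E_j$. This Peirce space is a subalgebra of rank $2$.

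\begin{itemize}
\item In case $(1,1,2)$, take $c_1,c_2$ to be the units of the two rank-$1$ components, and take $E_j$ to be the rank-$2$ component itself, which is $L^n$ with $n\ge3$.
\item In case $(2,2)$, take one component whole as the $L^n$ part. In the other component, which is isomorphic to some $L^m$, take its Jordan frame $\{c_1,c_2\}$; then $\bbR c_1\oplus\bbR c_2$ is a subalgebra isomorphic to $\bbR^2$.
\item In case $(1,3)$, the rank-$3$ component is isomorphic to $\calH^3(\bbA)$ by \cref{thm:EJA_isom}. Inside it we use the subalgebra of matrices with block form $\mathrm{diag}(a, Y)$, $Y\in\calH^2(\bbA)$, which is $\bbR\times \calH^2(\bbA)$. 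Combined with the rank-$1$ component, this gives $\bbR^2\times\calH^2(\bbA)$.
\item In case $(4)$, we may use the subalgebra $\bbR\times\bbR\times\calH^2(\bbA)$ of block-diagonal matrices when $E\cong\calH^4(\bbA)$ with $\bbA\in\{\bbR,\bbC,\bbH\}$. When $E\cong L^n$-type simple algebras, rank $4$ cannot occur, since spin factors have rank $2$; the octonion case requires $n\le3$, so it does not arise either. Thus rank-$4$ simple EJAs are exactly $\calH^4(\bbA)$ with $\bbA\in\{\bbR,\bbC,\bbH\}$, by the classification in \cite[Chapter~\RNum{5}]{FK1994}.
\end{itemize}

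It remains to use the standard fact that $\calH^2(\bbA)$ is Jordan-isomorphic to $L^{n}$ with $n=\dim\calH^2(\bbA)\ge 3$. This follows from \eqref{enum:EJA_isom_rank2} of \cref{thm:EJA_isom}, since $\calH^2(\bbA)$ is simple of rank $2$. Simplicity holds because the off-diagonal part is nonzero, so $\calH^2(\bbA)$ is not $\bbR^2$.

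The main obstacle is bookkeeping rather than depth. We must check that each proposed $E'$ is closed under $\circ$ and is a direct sum of orthogonal subalgebras, so that it is Jordan-isomorphic to the product. For the block-diagonal subalgebras of $\calH^m(\bbA)$ this is a direct matrix computation. For orthogonal components it follows from $E_i\circ E_j=0$. We also need to quote the classification that simple rank-$4$ EJAs are $\calH^4(\bbA)$ for $\bbA\in\{\bbR,\bbC,\bbH\}$, which \cref{thm:EJA_isom} does not state.
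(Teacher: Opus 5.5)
Your proof is correct, but it reaches the subalgebra $E'$ by a different route from the paper's.

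The paper does no case split on the ranks of the simple components. It fixes an arbitrary Jordan frame $\{c_1,c_2,c_3,c_4\}$ and uses the Peirce decomposition $E=\bigoplus_i \bbR c_i\oplus\bigoplus_{i<j}E_{ij}$. It observes that $E\not\cong\bbR^4$ forces some $E_{ij}\neq\{0\}$, since otherwise $E=\setspan\{c_1,\dots,c_4\}\cong\bbR^4$. After relabelling so that $E_{34}\neq\{0\}$, it shows via Peirce multiplication rules that $\bbR c_3\oplus\bbR c_4\oplus E_{34}$ is a simple rank-$2$ subalgebra, hence $\cong L^n$ with $n\ge 3$ by \cref{thm:EJA_isom}. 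It then takes $E'=\bbR c_1\oplus\bbR c_2\oplus\bbR c_3\oplus\bbR c_4\oplus E_{34}$.

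This is exactly your framing, because $\bbR c_3\oplus\bbR c_4\oplus E_{34}$ is the Peirce $1$-space of $c_3+c_4$. The paper carries that framing through abstractly, so it needs only the rank-$2$ classification. You instead show case by case, over the patterns $(1,1,2)$, $(2,2)$, $(1,3)$, $(4)$, that a suitable frame exists, using block-diagonal models inside $\calH^3(\bbA)$ and $\calH^4(\bbA)$.

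Your approach makes every closure check an explicit matrix computation. This still works for $\bbA=\bbO$: closure of the block-diagonal subspace follows from the entrywise product formula, not from associativity. The cost is the full classification of simple EJAs of rank $3$ and $4$ plus the case bookkeeping.

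Two small presentation points:
\begin{itemize}
\item In cases $(1,3)$ and $(4)$, the idempotents $c_1$ (and $c_2$) lie in the same simple component $E_j$ as $c_3,c_4$. What you need, and what holds, is that they are orthogonal to the Peirce space $E_j(c_3+c_4,1)$, not to $E_j$.
\item Simplicity of $\calH^2(\bbA)$ is cleanest via dimension. A non-simple rank-$2$ EJA is $\cong\bbR^2$, whereas $\dim\calH^2(\bbA)=2+\dim_{\bbR}\bbA\ge 3$.
\end{itemize}
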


\begin{proof}
Fix a Jordan frame $\{c_1,c_2,c_3,c_4\}$ of $E$.
By \cite[Theorem~\RNum{4}.2.1]{FK1994}, $E$ can be decomposed into the following direct sum of subspaces:
\begin{equation*}
E = \bbR c_1 \oplus \bbR c_2 \oplus \bbR c_3 \oplus \bbR c_4 \oplus E_{12} \oplus E_{13} \oplus E_{23} \oplus E_{14} \oplus E_{24} \oplus E_{34},
\end{equation*}
where $E_{ij} \coloneqq \{x \in E \mid c_i\circ x = c_j\circ x = \tfrac{1}{2}x\}$ for every $1\le i < j\le 4$.
By the assumption that $E$ is not Jordan-isomorphic to the Hadamard EJA $\bbR^4$, there exist $1\le i <  j\le 4$ such that $E_{ij} \neq \{0\}$.
We may assume that $E_{34} \neq \{0\}$ without loss of generality.
We define $E_{3:4} \coloneqq \bbR c_3 \oplus \bbR c_4 \oplus E_{34}$, which is a  subalgebra of $E$ by \cite[Lemma~20.a]{GST2004} and \cite[Proposition~\RNum{4}.1.1]{FK1994}.
Since $E_{3:4}$ is a simple EJA of rank $2$ by $E_{34} \neq \{0\}$ and \cite[page~168]{NN2016}, \eqref{enum:EJA_isom_rank2} of \cref{thm:EJA_isom} implies that $E_{3:4}$ is Jordan-isomorphic to a Jordan spin algebra $L^n$ for some $n\ge 3$.
Therefore, $E' \coloneqq \bbR c_1 \oplus \bbR c_2 \oplus E_{3:4}$ is a subalgebra of $E$ and $E'$ is Jordan-isomorphic to the direct product of the Hadamard EJA $\bbR^2$ and $L^n$.
Since the cone of squares in $\bbR^2\times L^n$ is $\bbR_+^2 \times L_+^n$, it follows from \cref{cor:CP_isom_Jordan,lem:subalg_expface} that $\CP(E_+)$ has the exposed face $\CP(E_+')$ that is linearly isomorphic to  $\CP(\bbR_+^2 \times L_+^n)$.
\end{proof}

\begin{lemma}\label{lem:nno2_soc3}
$\CP(\bbR_+^2 \times L_+^n)$ is not facially exposed for any $n\ge 3$.
\end{lemma}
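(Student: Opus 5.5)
First reduce to $n=3$. The Jordan spin algebra $L^n$ contains $L^3$ as a subalgebra (the span of the first three coordinates is closed under the product). So $\bbR^2\times L^3$ is a subalgebra of $\bbR^2\times L^n$, and by \cref{lem:subalg_expface} $\CP(\bbR_+^2\times L_+^3)$ is an exposed face of $\CP(\bbR_+^2\times L_+^n)$. By \cref{thm:nonexposed} it therefore suffices to show that $K\coloneqq\CP(\bbR_+^2\times L_+^3)$, a cone in $\calS^5$, is not facially exposed.

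The overall strategy mirrors the proof of \cref{lem:CP_PSD3}. We construct a chain of exposed faces $\CP(Z_1)\supseteq\CP(Z_2)$, where $Z_1=Z_{\bbR_+^2\times L_+^3}(D_1)$ and $Z_2=Z_{Z_1}(D_2)$. We then show, by a duality argument, that $\CP(Z_2)$ is not exposed in $K$.

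\textbf{Choosing $D_1$.} A natural choice for $D_1$ makes its zero set look like a ``curve'' inside $\bbR_+^2\times L_+^3$. Parametrize the boundary of $L_+^3$ by $(1+s^2,\,1-s^2,\,2s)$, or equivalently by rank-one matrices of $\calS^2_+$ via $L^3\cong\calS^2$. Take $D_1$ with $q_{D_1}$ copositive and vanishing exactly on finitely many rays together with a one-parameter family $\bm{x}(s)=(\mu(s),\nu(s),\text{boundary point of }L_+^3)$, where $\mu,\nu\ge0$ are quadratics in $s$. Concretely, I would pick $q_{D_1}(\bm{x})=(\text{linear form})^2+(\text{linear form})\cdot\langle \bm J_3,\bm{x}_{3:5}\rangle$-type expressions. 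These should be nonnegative on the cone because products of nonnegative coordinates with $\langle\bm J_3,\cdot\rangle\ge0$ are nonnegative, and they should vanish only on the desired family. Under the map $\bm{x}(s)\mapsto$ degree-$\le 4$ polynomials in $s$, the face $\CP(Z_1)$ then behaves like the cone of moment matrices of a quartic curve. This is the same mechanism as for the classical $\CP(\bbR_+^5)$ counterexamples, where nonnegative quartics in one variable are sums of squares.

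\textbf{Choosing $D_2$ and the duality argument.} Next choose $D_2\in\COP(Z_1)$ whose quadratic form, restricted to the curve, is $\prod(s-s_i)^2$ for a few chosen points $s_i$ (as with $q_{\bm H}\circ\bm y$). Then $Z_2$ is a finite set of rays, possibly plus some isolated extreme rays of $Z_1$. For the non-exposedness step, take $A\in\COP(\bbR_+^2\times L_+^3)\cap\CP(Z_2)^\perp$. Write $p(s)=q_A(\bm{x}(s))$, which is a nonnegative univariate polynomial, hence a sum of squares. Its roots at the $s_i$ force it into a small span, as in the factor-theorem step of \cref{lem:COPPSD3_cap_CPZ2perp}. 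Then use first-order optimality of $q_A$ at the points of $Z_2$ that lie in the relative interior of some face of $\bbR_+^2\times L_+^3$, where gradient conditions apply, to kill the remaining coefficients. The conclusion should be that $A$ vanishes on $\bm x(s)\otimes\bm x(s)$ for all $s$. Hence every exposed face containing $\CP(Z_2)$ contains the whole curve, contradicting the fact that $D_2$ separates a curve point from $\CP(Z_2)$.

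\textbf{Main obstacle.} The hard step is finding explicit $D_1$ and $D_2$ with exactly controlled zero sets, and making the gradient conditions strong enough. Unlike $\calS^3_+$, this cone has a non-smooth boundary: the faces $\{0\}\times\cdots$ and the boundary of $L_+^3$. The optimality conditions at zeros on the lower-dimensional faces give only inequalities, via the normal cone of $\bbR_+^2\times L_+^3$, so the points of $Z_2$ must be placed where they yield equalities. My first attempt would be to embed the known non-exposed face of $\CP(\bbR_+^5)$ from \cite{Zha2018}, using that $L_+^3\cong\calS_+^2$ contains directions behaving like three nonnegative coordinates along the curve, and then verify copositivity by an explicit SOS/AM--GM computation as in \cref{lem:rank3_3_D1_COP}.
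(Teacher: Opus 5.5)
Your reduction to $n=3$ is correct and is essentially the paper's first step; you use the subalgebra $\bbR^2\times L^3$, the paper uses the subspace $\setspan\{\bm e_1,\dots,\bm e_5\}$. After that, however, you give a template rather than a proof. You never specify $\bm D_1$ or $\bm D_2$, never compute $Z_1$ and $Z_2$, and never give the argument that forces an exposing functional to vanish along the curve. You flag this yourself as the main obstacle, and it is where all of the content lies.

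Your concrete hints do not close the gap. The expression ``(linear form)$\cdot\langle\bm J_3,\bm x_{3:5}\rangle$'' is not a quadratic form, since a linear form times the Lorentz form is cubic. What works is a square plus the Lorentz form plus a product of two linear forms that are nonnegative on the cone, as in the paper:
\[
q_{\bm D_1}(\bm x)=\bm x_{3:5}^\top\bm J_3\bm x_{3:5}+(x_1-x_2+x_3+x_4-x_5)^2+2x_1(x_3-x_5).
\]
Its zero set in $\bbR_+^2\times L_+^3$ is $\bbR_+\{\bm x(s)\mid s\le 1\}\cup\bbR_+\bm x^*\cup C$, where $\bm x(s)=(0,1-s,\tfrac{1+s^2}{2},\tfrac{1-s^2}{2},s)^\top$, $\bm x^*=(0,0,1,-1,0)^\top$, and $C=\{\alpha(\bm e_1+\bm e_2)+\beta\bm x(1)\mid\alpha,\beta\ge0\}$. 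Then $q_{\bm D_2}(\bm x)=(x_3-x_4)(x_3+x_4-x_5)$ cuts this down to $Z_2=\bbR_+\{\bm x(0),\bm x^*\}\cup C$. Your fallback of transplanting Zhang's non-exposed face of $\CP(\bbR_+^5)$ is not worked out and would itself need exactly such a construction.

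The more serious problem is that your sketched mechanism does not say where the needed \emph{asymmetry} between $\bm D_2$ and a hypothetical exposing $\bm D$ comes from. Consider any root or degree condition on $p=q_{\bm D}\circ\bm x$ derived only from nonnegativity along the curve and vanishing on $Z_2$. It holds equally for $q_{\bm D_2}\circ\bm x$, because the curve lies in $Z_1$ and $\bm D_2\in\COP(Z_1)$. So if such conditions forced $p\equiv0$, they would force $q_{\bm D_2}\circ\bm x\equiv0$, and $\bm D_2$ could not cut off any curve point. The SOS/factor-theorem part therefore cannot do the job. Gradient conditions coming from copositivity on the \emph{whole} cone are the right kind of tool, but you do not say which ones.

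The paper gets the asymmetry from two features your design lacks:
\begin{enumerate}
\item The ray $\bm x^*$ is the leading direction of the curve, $\bm x(s)=\tfrac{s^2}{2}\bm x^*+(\text{affine in }s)$. Hence $\bm x^*\in Z_2$ gives $\deg p\le 3$.
\item The curve \emph{leaves} the cone at $s=1$, whereas your choice of $\mu,\nu\ge0$ for all $s$ throws this away. At $s=1$, $q_{\bm D_2}(\bm x(s))=s^2(1-s)$ has only a simple root. For $\bm D\in\COP(\bbR_+^2\times L_+^3)$, by contrast, $C\subseteq Z_2$ gives $(\bm e_1+\bm e_2)^\top\bm D\bm x(1)=0$. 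Since $t\bm e_i+\bm x(1)$ lies in the cone for $t>0$, each term is nonnegative, so $\bm e_1^\top\bm D\bm x(1)=\bm e_2^\top\bm D\bm x(1)=0$. The extended curve $\overline{\bm x}(s)=\bm x(s)-(1-s)\bm e_2$ stays in the cone for all $s$, and it then yields $p'(1)=0$.
\end{enumerate}
Together with $p(0)=p'(0)=0$ (an interior minimum), this forces $p\equiv0$. So $q_{\bm D}(\bm x(-1))=0$, contradicting $q_{\bm D_2}(\bm x(-1))=2$. Without these or equivalent ingredients, your argument does not establish non-exposedness.
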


We defer the proof of \cref{lem:nno2_soc3} to \cref{subsec:proof_lem_nno2_soc3} so as not to disturb the flow of the argument.
\Cref{lem:rank4_not_HEJA_expface,lem:nno2_soc3} lead to the non-facial exposedness of $\CP(E_+)$ in the case where $E$ is not Jordan-isomorphic to $\bbR^4$, as stated in the following proposition.

\begin{proposition}\label{prop:rank4_not_HEJA}
Let $E$ be an EJA of rank $4$ that is not Jordan-isomorphic to the Hadamard EJA $\bbR^4$.
Then $\CP(E_+)$ is not facially exposed.
\end{proposition}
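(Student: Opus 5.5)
The proposition follows by combining the two preceding lemmas, in the same way that \cref{prop:rank3_3} was derived from \cref{lem:rank3_3_isom,lem:CP_PSD3}.

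First, I apply \cref{lem:rank4_not_HEJA_expface} to $E$. It yields an integer $n\ge 3$ and an exposed face $F_1$ of $\CP(E_+)$ that is linearly isomorphic to $\CP(\bbR_+^2\times L_+^n)$. Concretely, $F_1 = \CP(E'_+)$ for the subalgebra $E' = \bbR c_1\oplus\bbR c_2\oplus E_{3:4}$. By \cref{lem:nno2_soc3}, the cone $\CP(\bbR_+^2\times L_+^n)$ is not facially exposed for this $n$.

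Second, I transfer this failure to $F_1$. By \cref{thm:isom_face_exposed}, facial exposedness is invariant under linear isomorphism, so $F_1$, regarded as a cone in its own right, is not facially exposed.

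Third, since $F_1$ is a face of $\CP(E_+)$ that is not facially exposed as a cone, the final assertion of \cref{thm:nonexposed} gives that $\CP(E_+)$ is not facially exposed.

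Once the earlier lemmas are granted, there is no real obstacle here. All of the substantive work sits in \cref{lem:nno2_soc3}, whose proof is deferred. For that lemma I would mimic \cref{subsec:proof_lem_rank3_3}: build a chain of two exposed faces via copositive quadratic forms whose zero sets are finite collections of rank-one points. I would then show, through a sum-of-squares or gradient argument, that every copositive form vanishing on the second face also vanishes on a strictly larger set, which rules out exposedness.
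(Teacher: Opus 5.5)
Your proof is correct and matches the paper's argument exactly: \cref{lem:rank4_not_HEJA_expface} supplies an exposed face of $\CP(E_+)$ linearly isomorphic to $\CP(\bbR_+^2\times L_+^n)$, \cref{lem:nno2_soc3} shows that cone is not facially exposed, and \cref{thm:isom_face_exposed,thm:nonexposed} transfer this to $\CP(E_+)$. Your closing sketch for the deferred lemma is not needed here, and it does not quite match the paper: there $Z_2$ contains the two-dimensional cone $C$, so it is not a finite set of rays, and the key step is a divisibility argument showing $q_{\bm{D}}(\bm{x}(s))$ is identically zero, not a sum-of-squares argument.
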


\begin{proof}
By \cref{lem:rank4_not_HEJA_expface}, $\CP(E_+)$ has an exposed face that is linearly isomorphic to $\CP(\bbR_+^2 \times L_+^n)$ for some $n\ge 3$.
Since $\CP(\bbR_+^2 \times L_+^n)$ is not facially exposed by \cref{lem:nno2_soc3}, it follows from \cref{thm:nonexposed,thm:isom_face_exposed} that $\CP(E_+)$ is not facially exposed.
\end{proof}

Combining \cref{prop:rank4_not_HEJA} with the case where $E$ is Jordan-isomorphic to $\bbR^4$, we obtain the following characterization of the facial exposedness of $\CP(E_+)$ when $E$ has rank $4$.

\begin{theorem}\label{thm:rank4}
Let $E$ be an EJA of rank $4$.
Then $\CP(E_+)$ is facially exposed if and only if $E$ is Jordan-isomorphic to the Hadamard EJA $\bbR^4$.
\end{theorem}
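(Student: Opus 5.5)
We prove the two directions of \cref{thm:rank4} separately, using results already established.

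For sufficiency, assume that $E$ is Jordan-isomorphic to the Hadamard EJA $\bbR^4$. Its cone of squares is $\bbR_+^4$. By \cref{thm:CPn_facially_exposed} with $n = 4$, the cone $\CP(\bbR_+^4)$ is facially exposed (indeed spectrahedral, by the Maxfield--Minc description as doubly nonnegative matrices). By \cref{cor:CP_isom_Jordan}, facial exposedness transfers along the Jordan isomorphism, so $\CP(E_+)$ is facially exposed.

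For necessity, argue by contraposition. Suppose $E$ is not Jordan-isomorphic to $\bbR^4$. Then \cref{prop:rank4_not_HEJA} says directly that $\CP(E_+)$ is not facially exposed. That proposition rests on two ingredients:
\begin{enumerate}[label=(\alph*)]
\item \cref{lem:rank4_not_HEJA_expface}, which uses the Peirce decomposition with respect to a Jordan frame to produce a subalgebra isomorphic to $\bbR^2\times L^n$, together with \cref{lem:subalg_expface};
\item \cref{lem:nno2_soc3}, combined with the heredity of non-exposedness in \cref{thm:nonexposed,thm:isom_face_exposed}.
\end{enumerate}
The two directions together give the equivalence.

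At the level of this theorem the argument is only bookkeeping. The real obstacle is the deferred \cref{lem:nno2_soc3}, which states that $\CP(\bbR_+^2\times L_+^n)$ is not facially exposed for $n\ge 3$.

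I would reduce this to $n = 3$ by the subalgebra trick. Restricting to a coordinate subspace gives $\bbR_+^2\times L_+^3$ as an intersection of $\bbR_+^2\times L_+^n$ with a subspace, so by \cref{thm:CP_face_subsp} the smaller cone is an exposed face, and non-exposedness is inherited through \cref{thm:nonexposed}.

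For the case $n = 3$ I would mimic the strategy of \cref{subsec:proof_lem_rank3_3}:
\begin{enumerate}[label=(\roman*)]
\item Pick copositive $D_1$ and then $D_2$ whose zero sets cut out a finite set of rank-one generators $\bm{a}_i\bm{a}_i^\top$. These would come from points on the boundary circle of $L_+^3$ together with the two orthant directions, arranged so that the exposed face of an exposed face is $\CP$ of that finite configuration.
\item Show that every copositive $A$ vanishing on the configuration also vanishes on some extra point $\bm{a}^*\bm{a}^{*\top}$, which $D_2$ does not annihilate.
\end{enumerate}
Step (ii) is the hard part. My first attempt would be a Hilbert-type or sum-of-squares argument on the quadratic form restricted to a parametrized boundary curve of $L_+^3$ (a conic, parametrized rationally), using the forced vanishing of gradients at the zeros as in \cref{lem:COPPSD3_cap_CPZ2perp}.
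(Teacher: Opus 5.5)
Your argument is correct and is exactly the paper's proof: sufficiency via \cref{thm:CPn_facially_exposed} with $n=4$ together with \cref{cor:CP_isom_Jordan}, and necessity via \cref{prop:rank4_not_HEJA}. Your sketch for the deferred \cref{lem:nno2_soc3} is not needed at this level, and it also departs from the paper's construction there. In the paper, $Z_2$ contains the two-dimensional cone $C$ rather than finitely many rays, and the key step is a degree count: $p(s)=q_{\bm{D}}(\bm{x}(s))$ has degree at most $3$ yet is divisible by $s^2(s-1)^2$, so no sum-of-squares argument is used.
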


\begin{proof}
If $E$ is not Jordan-isomorphic to the Hadamard EJA $\bbR^4$, the non-facial exposedness of $\CP(E_+)$ follows from \cref{prop:rank4_not_HEJA}.
If $E$ is Jordan-isomorphic to $\bbR^4$, the facial exposedness of $\CP(E_+)$ can be shown by an argument similar to the proof of \cref{thm:rank1}.
\end{proof}

\subsection{Proof of \cref{lem:nno2_soc3}}\label{subsec:proof_lem_nno2_soc3}
This subsection is devoted to the proof of \cref{lem:nno2_soc3}.
The outline of the proof is similar to that of \cref{lem:CP_PSD3} presented in \cref{subsec:proof_lem_rank3_3}.
In \cref{subsubsec:nno2_soc3_step1}, we observe that $\CP(\bbR_+^2 \times L_+^n)$ has a face that is linearly isomorphic to $\CP(\bbR_+^2 \times L_+^3)$ and construct a face of $\CP(\bbR_+^2 \times L_+^3)$ by constructing a chain of faces.
In \cref{subsubsec:nno2_soc3_step2}, we show that the final face is not exposed in $\CP(\bbR_+^2 \times L_+^3)$, which implies that $\CP(\bbR_+^2 \times L_+^n)$ is not facially exposed.

\subsubsection{Construction of a candidate non-exposed face of $\CP(\bbR_+^2 \times L_+^3)$}\label{subsubsec:nno2_soc3_step1}
In the first step, we show that $\CP(\bbR_+^2 \times L_+^n)$ has an exposed face that is linearly isomorphic to $\CP(\bbR_+^2 \times L_+^3)$.
We define $X \coloneqq \setspan\{\bm{e}_1,\dots,\bm{e}_5\} \subseteq \bbR^{n+2}$.
It follows from \cref{thm:CP_face_subsp} that $\CP((\bbR_+^2 \times L_+^n) \cap X)$ is an exposed face of $\CP(\bbR_+^2 \times L_+^n)$.
Since
\begin{equation*}
(\bbR_+^2 \times L_+^n) \cap X = \{(x_1,\dots,x_5,0,\dots,0)^\top \in \bbR^{n+2} \mid (x_1,\dots,x_5)^\top \in \bbR_+^2 \times L_+^3\}
\end{equation*}
and it is linearly isomorphic to $\bbR_+^2 \times L_+^3$, it follows from \cref{lem:CP_isom} that $\CP((\bbR_+^2 \times L_+^n) \cap X)$ is linearly isomorphic to $\CP(\bbR_+^2 \times L_+^3)$.

In the second step, we construct an exposed face of $\CP(\bbR_+^2 \times L_+^3)$.
Let $\bm{D}_1 \in \calS^5$ be such that the associated quadratic form satisfies
\begin{equation}
q_{\bm{D}_1}(\bm{x}) = \bm{x}_{3:5}^\top\bm{J}_3\bm{x}_{3:5} + (x_1 - x_2 + x_3 + x_4 - x_5)^2 + 2x_1(x_3-x_5). \label{eq:nno2_soc3_qD1}
\end{equation}
We see that $\bm{D}_1 \in \COP(\bbR_+^2 \times L_+^3)$ since each of the three terms on the right-hand side of \eqref{eq:nno2_soc3_qD1} is nonnegative for any $\bm{x}\in \bbR_+^2 \times L_+^3$.
Recall that $Z_{\bbR_+^2\times L_+^3}(\bm{D}_1)$, denoted by $Z_1$ for simplicity, is defined in \eqref{eq:def_ZKD}.
It follows from \cref{lem:CP_exposed_face} that $\CP(Z_1)$ is an exposed face of $\CP(\bbR_+^2 \times L_+^3)$.
The following lemma gives a more explicit description of $Z_1$.

\begin{lemma}\label{lem:nno2_soc3_Z1}
Let
\begin{align*}
\bm{x}(s) &\coloneqq \left(0,1-s,\frac{1+s^2}{2},\frac{1-s^2}{2},s\right)^\top\ (s\in \bbR),\\
\bm{x}^* &\coloneqq (0,0,1,-1,0)^\top
\end{align*}
and define
\begin{equation*}
C \coloneqq \{\alpha(\bm{e}_1 + \bm{e}_2) + \beta\bm{x}(1) \mid \alpha,\beta \ge 0\} = \{(\alpha,\alpha,\beta,0,\beta)^\top \mid \alpha,\beta\ge 0\}.
\end{equation*}
Then we have
\begin{equation}
Z_1 = \bbR_+\{\bm{x}(s) \mid s\le 1\} \cup \bbR_+\bm{x}^* \cup C. \label{eq:nno2_soc3_Z1}
\end{equation}
\end{lemma}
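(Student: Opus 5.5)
The plan is to exploit the fact, already noted before the lemma, that $q_{\bm{D}_1}$ is a sum of three terms each nonnegative on $\bbR_+^2\times L_+^3$. Then $\bm{x}\in Z_1$ if and only if $\bm{x}\in \bbR_+^2\times L_+^3$ and all three terms vanish simultaneously:
\begin{equation*}
x_3^2 = x_4^2+x_5^2,\qquad x_2 = x_1+x_3+x_4-x_5,\qquad x_1(x_3-x_5)=0 .
\end{equation*}
The first condition says that $\bm{x}_{3:5}$ lies on the boundary of $L_+^3$. So I first record the standard parametrization of that boundary: every $(a,b,c)^\top$ with $a=\sqrt{b^2+c^2}$ is either a nonnegative multiple of $(\tfrac{1+s^2}{2},\tfrac{1-s^2}{2},s)^\top$ for some $s\in\bbR$, or a nonnegative multiple of $(1,-1,0)^\top$. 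To see this, suppose $a+b>0$ and put $s\coloneqq c/(a+b)$. A direct computation, using $c^2=a^2-b^2$, gives $(a+b)\tfrac{1+s^2}{2}=a$, $(a+b)\tfrac{1-s^2}{2}=b$, and $(a+b)s=c$. If instead $a+b=0$, then $c^2=a^2-b^2=0$, so $c=0$ and the vector is $a(1,-1,0)^\top$.

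For the inclusion of $Z_1$ into the right-hand side of \eqref{eq:nno2_soc3_Z1}, I split on $x_1$.

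If $x_1=0$, the third condition is automatic and the second reads $x_2=x_3+x_4-x_5$. When $\bm{x}_{3:5}=\lambda(\tfrac{1+s^2}{2},\tfrac{1-s^2}{2},s)^\top$ with $\lambda\ge 0$, we get $x_2=\lambda(1-s)$. Hence $\bm{x}=\lambda\bm{x}(s)$, and $x_2\ge0$ forces $s\le 1$ whenever $\lambda>0$. When $\bm{x}_{3:5}=\lambda(1,-1,0)^\top$, we get $x_2=0$ and $\bm{x}=\lambda\bm{x}^*$.

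If $x_1>0$, then $x_3=x_5$. The boundary condition then gives $x_4=0$, so $x_3=x_5=\beta\ge0$, and the second condition yields $x_2=x_1$. Thus $\bm{x}=(\alpha,\alpha,\beta,0,\beta)^\top\in C$ with $\alpha=x_1$.

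For the converse inclusion, I check membership and the three vanishing conditions directly on each generator. For $\bm{x}(s)$ with $s\le1$: the first two coordinates are $0$ and $1-s\ge0$, the tail lies on the boundary of $L_+^3$, and $x_3+x_4-x_5=1-s=x_2$. For $\bm{x}^*$ the checks are immediate. For $(\alpha,\alpha,\beta,0,\beta)^\top$ we have $x_3=x_5$, the tail is on the boundary, and $x_1-x_2+x_3+x_4-x_5=0$. The zero vector lies in every component.

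There is no real obstacle here. The only step needing care is the boundary parametrization of $L_+^3$, in particular the degenerate ray $(1,-1,0)^\top$, which is exactly what produces the separate generator $\bm{x}^*$.
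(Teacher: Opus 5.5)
Your proof is correct and uses the same ingredients as the paper's proof. These are the three vanishing conditions coming from the nonnegativity of each term of $q_{\bm{D}_1}$ on $\bbR_+^2\times L_+^3$, and the parametrization $s = x_5/(x_3+x_4)$ of the boundary of $L_+^3$, with the degenerate ray $x_3+x_4=0$ producing $\bm{x}^*$. The difference is only in organization: the paper splits first on whether $x_3+x_4=0$ and then on whether $s=1$ to pin down $x_1$, whereas you split first on $x_1$, which lets you handle $x_1>0$ directly (via $x_3=x_5$, hence $x_4=0$) without the parametrization; you also verify the reverse inclusion that the paper leaves to direct calculation.
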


\begin{proof}
We only prove that $Z_1$ is contained in the right-hand side of \eqref{eq:nno2_soc3_Z1} because the converse inclusion can be verified through direct calculations.
Let $\bm{x}\in Z_1$, i.e., $\bm{x}\in \bbR_+^2 \times L_+^3$ and $q_{\bm{D}_1}(\bm{x}) = 0$.
Since $q_{\bm{D}_1}(\bm{x}) = 0$, each of the three terms on the right-hand side of \eqref{eq:nno2_soc3_qD1} is also $0$, and we have
\begin{align}
(x_3 + x_4)(x_3 - x_4) &= x_5^2, \label{eq:qD1x_1st}\\
x_1 - x_2 + x_3 + x_4 - x_5 &= 0, \label{eq:qD1x_2nd}\\
x_1(x_3-x_5) &= 0. \label{eq:qD1x_3rd}
\end{align}

First, we consider the case where $\beta \coloneqq x_3 + x_4$ is not $0$.
We note that $\beta > 0$ by $\bm{x}_{3:5}\in L_+^3$.
Let $s_{\beta} \coloneqq x_5/\beta$.
Then \eqref{eq:qD1x_1st} implies that $x_3 - x_4 = \beta s_{\beta}^2$.
Combining this with $x_3 + x_4 = \beta$, we have
\begin{equation*}
\bm{x}_{3:5} = \beta\left(\frac{1+s_{\beta}^2}{2},\frac{1-s_{\beta}^2}{2},s_{\beta}\right)^\top.
\end{equation*}
We further distinguish two cases depending on whether $s_{\beta} = 1$ or not.
If $s_{\beta} = 1$, then we have $\bm{x}_{3:5} = \beta(1,0,1)^\top$.
Substituting this into \eqref{eq:qD1x_2nd}, we have $x_1 = x_2$, and this common value is nonnegative by $\bm{x}_{1:2} \in \bbR_+^2$.
Then we have
\begin{equation*}
\bm{x} = (x_1,x_1,\beta,0,\beta)^\top \in C.
\end{equation*}
If $s_{\beta}\neq 1$, then since $x_3 - x_5 = \tfrac{\beta}{2}(1-s_{\beta})^2 \neq 0$, it follows from \eqref{eq:qD1x_3rd} that $x_1 = 0$.
Consequently, \eqref{eq:qD1x_2nd} implies that $x_2 = \beta(1-s_{\beta})$.
Since $\bm{x}_{1:2} \in \bbR_+^2$, we have $s_{\beta}\le 1$.
Therefore, we obtain
\begin{equation*}
\bm{x} = \beta\bm{x}(s_{\beta}) \in \bbR_+\{\bm{x}(s) \mid s\le 1\}.
\end{equation*}

Second, we consider the case where $x_3 + x_4 = 0$.
Since \eqref{eq:qD1x_1st} implies that $x_5 = 0$, we see that $\bm{x}_{3:5} = x_3(1,-1,0)$.
Substituting this into \eqref{eq:qD1x_2nd} and \eqref{eq:qD1x_3rd}, we have $x_1 = x_2$ and $x_1x_3 = 0$, respectively.
We note that $x_3 \ge 0$ by $\bm{x}_{3:5} \in L_+^3$.
If $x_3 > 0$, then $x_1 = 0$, and we have
\begin{equation*}
\bm{x} = x_3(0,0,1,-1,0)^\top \in \bbR_+\bm{x}^*.
\end{equation*}
If $x_3 = 0$, then we have
\begin{equation*}
\bm{x} = (x_1,x_1,0,0,0)^\top \in C.
\end{equation*}
Therefore, we obtain the desired result.
\end{proof}

In the third step, we construct an exposed face of $\CP(Z_1)$.
Let $\bm{D}_2 \in \calS^5$ be such that the associated quadratic form satisfies
\begin{equation*}
q_{\bm{D}_2}(\bm{x}) = (x_3 - x_4)(x_3 + x_4 - x_5).
\end{equation*}
Using \cref{lem:nno2_soc3_Z1}, we can easily check that $\bm{D}_2 \in \COP(Z_1)$.
For simplicity, we let $Z_2 \coloneqq Z_{Z_1}(\bm{D}_2)$.
By \cref{lem:CP_exposed_face}, we have
\begin{equation}
\CP(Z_1) \cap \{\bm{D}_2\}^\perp = \CP(Z_2). \label{eq:nno2_soc3_CPZ2_exposed_by_D2}
\end{equation}
Since $\CP(Z_1)$ is a face of $\CP(\bbR_+^2 \times L_+^3)$, by \eqref{enum:face_iff} of \cref{thm:nonexposed}, $\CP(Z_2)$ is a face of $\CP(\bbR_+^2 \times L_+^3)$.

We can represent $Z_2$ more explicitly as
\begin{equation}
Z_2 = \bbR_+\{\bm{x}(0),\bm{x}^*\} \cup C. \label{eq:nno2_soc3_Z2}
\end{equation}
We only show that $Z_2$ is contained in the right-hand side of \eqref{eq:nno2_soc3_Z2} because  the converse inclusion follows from \cref{lem:nno2_soc3_Z1} and direct calculations.
Let $\bm{x} \in Z_2$, i.e., $\bm{x} \in Z_1$ and $q_{\bm{D}_2}(\bm{x}) = 0$.
By \cref{lem:nno2_soc3_Z1}, we see that $\bm{x}$ belongs to one of the three components on the right-hand side of \eqref{eq:nno2_soc3_Z1}.
First, if $\bm{x}\in \bbR_+\bm{x}^*$ or $\bm{x}\in C$, then $\bm{x}$ belongs to the right-hand side of \eqref{eq:nno2_soc3_Z2}.
Next, we assume that there exist $\beta \ge 0$ and $s\le 1$ such that $\bm{x} = \beta\bm{x}(s)$.
Since $0 = q_{\bm{D}_2}(\bm{x}) = \beta^2s^2(1-s)$, we have $\beta = 0$, $s = 0$, or $s = 1$.
If $\beta = 0$, then $\bm{x} = \bm{0}$, which belongs to the right-hand side of  \eqref{eq:nno2_soc3_Z2}.
If $s = 0$, then $\bm{x} = \beta\bm{x}(0) \in \bbR_+\bm{x}(0)$.
If $s = 1$, then $\bm{x} = \beta\bm{x}(1) \in C$.
Therefore, we obtain \eqref{eq:nno2_soc3_Z2}.

\subsubsection{Non-exposedness of $\CP(Z_2)$}\label{subsubsec:nno2_soc3_step2}
To show that $\CP(Z_2)$ is a non-exposed face of $\CP(\bbR_+^2 \times L_+^3)$ by contradiction, we assume that $\CP(Z_2)$ is an exposed face of $\CP(\bbR_+^2 \times L_+^3)$.
Then there exists $\bm{D}\in \COP(\bbR_+^2 \times L_+^3)$ such that
\begin{equation}
\CP(Z_2) = \CP(\bbR_+^2 \times L_+^3) \cap \{\bm{D}\}^\perp. \label{eq:nno2_soc3_CPZ2_exposed}
\end{equation}
We define $p(s) \coloneqq q_{\bm{D}}(\bm{x}(s))$, which is a polynomial in $s$.

\begin{lemma}\label{lem:p_zeropoly}
$p$ is the zero polynomial.
\end{lemma}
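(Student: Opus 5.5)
The plan is to show that $p$ is a polynomial of degree at most $4$ with a double root at $s=0$, a double root at $s=1$, and vanishing $s^4$-coefficient. Such a polynomial must be identically zero.

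\emph{Step 1: basic facts about $p$.} Since $\bm{x}(s)$ is quadratic in $s$, $p$ has degree at most $4$. For every $s\le 1$ we have $1-s\ge 0$. Moreover $\bm{x}(s)_{3:5}$ lies on the boundary of $L_+^3$, because $\bigl(\tfrac{1+s^2}{2}\bigr)^2 = \bigl(\tfrac{1-s^2}{2}\bigr)^2 + s^2$. Hence $\bm{x}(s)\in \bbR_+^2\times L_+^3$, and copositivity of $\bm{D}$ gives $p(s)\ge 0$ on $(-\infty,1]$.

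By \eqref{eq:nno2_soc3_Z2}, both $\bm{x}(0)$ and $\bm{x}(1)\in C$ lie in $Z_2$. By \eqref{eq:nno2_soc3_CPZ2_exposed}, every $\bm{z}\in Z_2$ satisfies $q_{\bm{D}}(\bm{z}) = \langle \bm{D},\bm{z}\bm{z}^\top\rangle = 0$. Thus $p(0)=p(1)=0$.

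The leading coefficient of $p$ is $q_{\bm{D}}\bigl((0,0,\tfrac12,-\tfrac12,0)^\top\bigr) = \tfrac14 q_{\bm{D}}(\bm{x}^*) = 0$, again since $\bm{x}^*\in Z_2$. So $p$ has degree at most $3$.

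\emph{Step 2: the root at $s=0$.} Since $0$ is an interior point of $(-\infty,1]$ and $p$ attains its minimum value $0$ there, $p'(0)=0$. So $s=0$ is a double root.

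\emph{Step 3: the root at $s=1$ (main obstacle).} This is the hard step, because $s=1$ is an endpoint of the interval on which we know $p\ge 0$. That interval only gives the one-sided condition $p'(1)\le 0$. To obtain the reverse inequality, I will continue the curve past $s=1$ inside the cone by adding a multiple of $\bm{e}_1+\bm{e}_2$. Set
\[
\bm{w}(s) \coloneqq \bm{x}(s) + (s-1)(\bm{e}_1+\bm{e}_2) = \bigl(s-1,\,0,\,\tfrac{1+s^2}{2},\,\tfrac{1-s^2}{2},\,s\bigr)^\top .
\]
For $s\ge 1$, $\bm{w}(s)\in \bbR_+^2\times L_+^3$, so $f(s)\coloneqq q_{\bm{D}}(\bm{w}(s))\ge 0$ on $[1,\infty)$. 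Since $f(1)=0$, this gives $f'(1)\ge 0$.

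Next I use that the whole two-dimensional cone $C=\{\alpha(\bm{e}_1+\bm{e}_2)+\beta\bm{x}(1)\mid \alpha,\beta\ge0\}$ lies in $Z_2$. Therefore the binary quadratic form
\[
(\alpha,\beta)\mapsto q_{\bm{D}}\bigl(\alpha(\bm{e}_1+\bm{e}_2)+\beta\bm{x}(1)\bigr)
\]
vanishes on the nonnegative quadrant. All three of its coefficients are then $0$; in particular $(\bm{e}_1+\bm{e}_2)^\top\bm{D}\bm{x}(1)=0$. Differentiating at $s=1$ gives
\[
f'(1) = 2\bm{x}(1)^\top\bm{D}\bigl(\bm{x}'(1)+\bm{e}_1+\bm{e}_2\bigr) = 2\bm{x}(1)^\top\bm{D}\bm{x}'(1) = p'(1).
\]
Hence $p'(1)\ge 0$, and combined with $p'(1)\le 0$ we get $p'(1)=0$. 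So $s=1$ is also a double root.

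\emph{Step 4: conclusion.} We now have a polynomial of degree at most $3$ with double roots at $0$ and at $1$. It is therefore divisible by $s^2(s-1)^2$, which has degree $4$, and so it is the zero polynomial.
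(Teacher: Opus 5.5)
Your proof is correct. Its skeleton matches the paper's:
\begin{itemize}
\item $p(0)=p'(0)=0$ from the interior minimum at $s=0$;
\item $p(1)=0$;
\item $\deg p\le 3$, because the $s^4$-coefficient is $\tfrac14 q_{\bm{D}}(\bm{x}^*)=0$. The paper phrases this via $\bm{x}(s)=\tfrac{s^2}{2}\bm{x}^*+(0,1-s,\tfrac12,\tfrac12,s)^\top$.
\end{itemize}

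The genuine difference is how you obtain $p'(1)=0$.

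The paper first extracts from $C\subseteq Z_2$ only the sum $\bm{e}_1^\top\bm{D}\bm{x}(1)+\bm{e}_2^\top\bm{D}\bm{x}(1)=0$. It then shows each summand is nonnegative by perturbing $\bm{x}(1)$ to $t\bm{e}_i+\bm{x}(1)$, so both vanish. Next it uses the curve $\overline{\bm{x}}(s)=\bm{x}(s)-(1-s)\bm{e}_2$, which stays in $\bbR_+^2\times L_+^3$ for \emph{all} $s\in\bbR$. Hence $s=1$ is an unconstrained minimizer of $\bar p$, and $p'(1)=\bar p'(1)-2\bm{e}_2^\top\bm{D}\bm{x}(1)=0$.

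You instead continue the curve past $s=1$ in the direction $\bm{e}_1+\bm{e}_2$, which itself lies in $C$. The cross term $(\bm{e}_1+\bm{e}_2)^\top\bm{D}\bm{x}(1)$ then vanishes directly from $C\subseteq Z_2$. This gives $p'(1)=f'(1)\ge 0$, which you pair with the one-sided inequality $p'(1)\le 0$ coming from $p\ge 0$ on $(-\infty,1]$.

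Your route dispenses with the separate perturbation argument for the individual terms $\bm{e}_i^\top\bm{D}\bm{x}(1)$, at the cost of gluing two one-sided conditions at $s=1$. The paper's route yields slightly more information, namely that each $\bm{e}_i^\top\bm{D}\bm{x}(1)$ vanishes. It also obtains a two-sided stationarity condition from a single curve that stays in the cone on both sides of $s=1$.
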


\begin{proof}
We observe that $s^2$ divides $p(s)$, i.e., $p(0) = p'(0) = 0$.
Indeed, $p(0) = q_{\bm{D}}(\bm{x}(0)) = 0$ by $\bm{x}(0) \in Z_2$ (see \eqref{eq:nno2_soc3_Z2}) and \eqref{eq:nno2_soc3_CPZ2_exposed}.
We see that $p(s) \ge 0$ for any $s\le 1$ since $\bm{x}(s) \in \bbR_+^2\times L_+^3$ (\cref{lem:nno2_soc3_Z1}) and $\bm{D}\in \COP(\bbR_+^2 \times L_+^3)$.
Therefore, $s = 0$ is a local minimizer of $p(s)$, and we have $p'(0) = 0$.

Moreover, $(s-1)^2$ divides $p(s)$, i.e., $p(1) = p'(1) = 0$.
Indeed, $p(1) = q_{\bm{D}}(\bm{x}(1)) = 0$ by $\bm{x}(1) \in C \subseteq Z_2$ (see \eqref{eq:nno2_soc3_Z2}) and \eqref{eq:nno2_soc3_CPZ2_exposed}.
Since $\bm{e}_1 + \bm{e}_2$ and $\bm{x}(1)$ belong to the convex cone $C$, which is contained in $Z_2$, it follows from \eqref{eq:nno2_soc3_CPZ2_exposed} that
\begin{equation}
0 = \frac{1}{2}q_{\bm{D}}(\bm{e}_1 + \bm{e}_2 + \bm{x}(1)) = \bm{e}_1^\top\bm{D}\bm{x}(1) + \bm{e}_2^\top\bm{D}\bm{x}(1). \label{eq:e1pe2_D_x1}
\end{equation}
Let $i \in \{1,2\}$ and $t > 0$ be arbitrary.
By $t\bm{e}_i + \bm{x}(1) \in \bbR_+^2 \times L_+^3$ and $\bm{D}\in \COP(\bbR_+^2 \times L_+^3)$, we have
\begin{equation*}
0 \le q_{\bm{D}}(t\bm{e}_i + \bm{x}(1)) = t^2q_{\bm{D}}(\bm{e}_i) + 2t\bm{e}_i^\top\bm{D}\bm{x}(1).
\end{equation*}
Dividing this by $2t$ and letting $t\downarrow 0$, we have $\bm{e}_i^\top\bm{D}\bm{x}(1) \ge 0$.
Combining this with \eqref{eq:e1pe2_D_x1}, since $i\in \{1,2\}$ is arbitrary, we obtain
\begin{equation}
\bm{e}_1^\top\bm{D}\bm{x}(1) = \bm{e}_2^\top\bm{D}\bm{x}(1) = 0. \label{eq:ei_D_x1_0}
\end{equation}
For $s\in \bbR$, let
\begin{equation*}
\overline{\bm{x}}(s) \coloneqq \left(0,0,\frac{1+s^2}{2},\frac{1-s^2}{2},s\right)^\top \in \bbR_+^2 \times L_+^3
\end{equation*}
and $\bar{p}(s) \coloneqq q_{\bm{D}}(\overline{\bm{x}}(s))$.
The polynomial $\bar{p}$ satisfies $\bar{p}(s) \ge 0$ for all $s\in \bbR$ by $\bm{D}\in \COP(\bbR_+^2 \times L_+^3)$ and $\bar{p}(1) = 0$ by $\overline{\bm{x}}(1) = \bm{x}(1) \in Z_2$ and \eqref{eq:nno2_soc3_CPZ2_exposed}.
This implies that
\begin{equation*}
0 = \bar{p}'(1) = 2\overline{\bm{x}}'(1)^\top\bm{D}\overline{\bm{x}}(1) = 2(\bm{x}'(1) + \bm{e}_2)^\top\bm{D}\bm{x}(1) = p'(1) + 2\bm{e}_2^\top\bm{D}\bm{x}(1) = p'(1),
\end{equation*}
where the first equality follows from the optimality of $\bar{p}$ at $s = 1$, the third equality follows from $\overline{\bm{x}}(s) = \bm{x}(s) - (1-s)\bm{e}_2$, and the fifth equality follows from \eqref{eq:ei_D_x1_0}.
Therefore, $s^2(s-1)^2$ divides $p(s)$.

The degree of $p$ is at most $3$ since $\bm{x}(s) = \tfrac{s^2}{2}\bm{x}^* + (0,1-s,\tfrac{1}{2},\tfrac{1}{2},s)^\top$ and $q_{\bm{D}}(\bm{x}^*) = 0$ by $\bm{x}^* \in Z_2$ (see \eqref{eq:nno2_soc3_Z2}).
Since $s^2(s-1)^2$ divides the polynomial $p(s)$ of degree at most $3$, we conclude that $p$ is the zero polynomial.
\end{proof}

It follows from \cref{lem:p_zeropoly} that $0 = p(-1) = q_{\bm{D}}(\bm{x}(-1))$.
Combining this with $\bm{x}(-1) \in \bbR_+^2\times L_+^3$ and \eqref{eq:nno2_soc3_CPZ2_exposed} yields $\bm{x}(-1)\bm{x}(-1)^\top \in \CP(Z_2)$.
However, since $q_{\bm{D}_2}(\bm{x}(-1)) = 2 \neq 0$, \eqref{eq:nno2_soc3_CPZ2_exposed_by_D2} implies that $\bm{x}(-1)\bm{x}(-1)^\top \not\in \CP(Z_2)$, which is a contradiction.
Therefore, $\CP(Z_2)$ is a non-exposed face of $\CP(\bbR_+^2\times L_+^3)$, and thus $\CP(\bbR_+^2\times L_+^3)$ is not facially exposed.
Since $\CP(\bbR_+^2 \times L_+^n)$ has an exposed face that is linearly isomorphic to $\CP(\bbR_+^2 \times L_+^3)$, by \cref{thm:nonexposed,thm:isom_face_exposed}, we conclude that $\CP(\bbR_+^2 \times L_+^n)$ is not facially exposed.

\section{Case of rank at least $5$}\label{sec:rank5}
In this section, we show that $\CP(E_+)$ is not facially exposed for any EJA $E$ of rank at least $5$.
We achieve this by constructing one of its faces that is linearly isomorphic to a completely positive cone over a nonnegative orthant.

\begin{theorem}\label{thm:rankge5}
Let $E$ be an EJA of rank at least $5$.
Then $\CP(E_+)$ is not facially exposed.
\end{theorem}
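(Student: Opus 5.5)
The plan is to find a face of $\CP(E_+)$ that is linearly isomorphic to $\CP(\bbR_+^r)$, where $r\ge 5$ is the rank of $E$, and then combine \cref{thm:CPn_facially_exposed} with \cref{thm:nonexposed,thm:isom_face_exposed}.

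Fix a Jordan frame $\{c_1,\dots,c_r\}$ of $E$ and set $E' \coloneqq \bbR c_1\oplus\cdots\oplus\bbR c_r$. Because the $c_i$ are idempotents with $c_i\circ c_j=0$ for $i\neq j$, the space $E'$ is closed under $\circ$, so it is a subalgebra of $E$. The linear map $\phi\colon \bbR^r\to E'$ sending $\bm{e}_i$ to $c_i$ is a Jordan isomorphism from the Hadamard EJA $\bbR^r$ onto $E'$, since
\begin{equation*}
\phi(\bm{x})\circ\phi(\bm{y}) = \sum_{i,j} x_iy_j\, c_i\circ c_j = \sum_i x_iy_i\, c_i = \phi(\bm{x}\circ\bm{y}).
\end{equation*}
In particular, the $c_i$ are linearly independent: they are nonzero and mutually orthogonal with respect to an associative inner product, because $c_i\bullet c_j = (c_i\circ c_i)\bullet c_j = c_i\bullet(c_i\circ c_j) = 0$.

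By \cref{lem:subalg_expface}, $\CP(E'_+)$ is an exposed face of $\CP(E_+)$. By \cref{cor:CP_isom_Jordan}, it is linearly isomorphic to $\CP(\bbR_+^r)$. Since $r\ge 5$, \cref{thm:CPn_facially_exposed} shows that $\CP(\bbR_+^r)$ is not facially exposed. By \cref{thm:isom_face_exposed}, the face $\CP(E'_+)$ is therefore not facially exposed as a cone, and \cref{thm:nonexposed} then gives that $\CP(E_+)$ is not facially exposed.

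I do not expect a real obstacle here. The only points that need checking are that $E'$ is a subalgebra and that $\phi$ is a Jordan isomorphism, and the computation above handles both. The substantial input is the non-facial exposedness of $\CP(\bbR_+^n)$ for $n\ge5$, which \cref{thm:CPn_facially_exposed} supplies from the literature.
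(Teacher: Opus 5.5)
Your proposal is correct and matches the paper's proof: both take a Jordan frame, use its span $E'$ as a subalgebra Jordan-isomorphic to the Hadamard EJA $\bbR^r$, apply \cref{lem:subalg_expface} to get the exposed face $\CP(E_+')$, and conclude via \cref{thm:CPn_facially_exposed}, \cref{cor:CP_isom_Jordan}, and \cref{thm:nonexposed}. Your explicit checks that $E'$ is closed under $\circ$ and that the $c_i$ are linearly independent are details the paper leaves implicit.
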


\begin{proof}
Let $r\ge 5$ denote the rank of $E$ and $\{c_1,\dots,c_r\}$ be a Jordan frame of $E$.
Then $E' \coloneqq \setspan\{c_1,\dots,c_r\}$ is a subalgebra of $E$ that is Jordan-isomorphic to the Hadamard EJA $\bbR^r$, where a Jordan isomorphism is the linear mapping sending $c_i \in E'$ to $\bm{e}_i \in \bbR^r$ for every $i \in \{1,\dots,r\}$.
It follows from \cref{lem:subalg_expface} that $\CP(E_+')$ is an exposed face of $\CP(E_+)$.
Since $\CP(\bbR_+^r)$ is not facially exposed by $r\ge 5$ and \cref{thm:CPn_facially_exposed}, it follows from \cref{cor:CP_isom_Jordan,thm:nonexposed} that $\CP(E_+)$ is not facially exposed.
\end{proof}

\vspace{0.5cm}
\noindent
{\bf Acknowledgments}
The author is supported by JSPS Grant-in-Aid for Research Activity Start-up JP25K23344.

\vspace{0.5cm}
\noindent
{\bf Declarations}
Preliminary proof ideas for \cref{lem:CP_PSD3,lem:nno2_soc3} were developed with the assistance of ChatGPT.
The author subsequently reconstructed and independently verified the proofs included in this paper and assumes full responsibility for their correctness.


\bibliographystyle{plainurl} 
\bibliography{2026_2_preprint_ref} %

@article{BMP2016,
author = {Bai, L. and Mitchell, J.E. and Pang, J.-S.},
title = {On conic {QPCC}s, conic {QCQPs} and completely positive programs},
journal = {Math. Program.},
fjournal = {Mathematical Programming},
volume = {159},
number = {1--2},
pages = {109--136},
year = {2016},
doi = {10.1007/s10107-015-0951-9},
memo = {https://doi.org/10.1007/s10107-015-0951-9}
}

@article{BDd+2000,
author = {Bomze, I.M. and D\"{u}r, M. and de Klerk, E. and Roos, C. and Quist, A.J. and Terlaky, T.},
title = {On copositive programming and standard quadratic optimization problems},
journal = {J. Global Optim.},
fjournal = {Journal of Global Optimization},
volume = {18},
number = {4},
pages = {301--320},
year = {2000},
doi = {10.1023/A:1026583532263},
memo = {https://doi.org/10.1023/A:1026583532263 J. Glob. Optim.}
}

@article{Bur2009,
author = {Burer, S.},
title = {On the copositive representation of binary and continuous nonconvex quadratic programs},
journal = {Math. Program.},
fjournal = {Mathematical Programming},
volume = {120},
number = {2},
pages = {479--495},
year = {2009},
doi = {10.1007/s10107-008-0223-z},
memo = {https://doi.org/10.1007/s10107-008-0223-z}
}

@article{Dic2011,
author = {Dickinson, P.J.C.},
title = {Geometry of the copositive and completely positive cones},
journal = {J. Math. Anal. Appl.},
fjournal = {Journal of Mathematical Analysis and Applications},
volume = {380},
number = {1},
pages = {377--395},
year = {2011},
doi = {10.1016/j.jmaa.2011.03.005},
memo = {https://doi.org/10.1016/j.jmaa.2011.03.005}
}

@book{FK1994,
author={Faraut, J. and Kor\'{a}nyi, A.},
title={Analysis on Symmetric Cones},
edition = {},
publisher={Clarendon Press},
address = {Oxford, UK},
year={1994}
}

@book{SB2021,
author={Shaked-Monderer, N. and Berman, A.},
title={Copositive and Completely Positive Matrices},
edition = {},
publisher={World Scientific},
address = {Singapore},
year = {2021},
doi = {10.1142/11386},
memo = {https://doi.org/10.1142/11386}
}

@article{SZ2003,
author = {Sturm, J.F. and Zhang, S.},
title = {On cones of nonnegative quadratic functions},
journal = {Math. Oper. Res.},
fjournal = {Mathematics of Operations Research},
volume = {28},
number = {2},
pages = {246--267},
year = {2003},
doi = {10.1287/moor.28.2.246.14485},
memo = {https://doi.org/10.1287/moor.28.2.246.14485}
}

@incollection{Bur2012,
author={Burer, S.},
title={Copositive programming},
editor={Anjos, M.F. and Lasserre, J.B.},
booktitle={Handbook on Semidefinite, Conic and Polynomial Optimization},
pages={201--218},
publisher={Springer, New York, NY},
year={2012},
doi = {10.1007/978-1-4614-0769-0_8},
memo = {https://doi.org/10.1007/978-1-4614-0769-0_8}
}

@article{BD2012,
author = {Burer, S. and Dong, H.},
title = {Representing quadratically constrained quadratic programs as generalized copositive programs},
journal = {Oper. Res. Lett.},
fjournal = {Operations Research Letters},
volume = {40},
number = {3},
pages = {203--206},
year = {2012},
doi = {10.1016/j.orl.2012.02.001},
memo = {https://doi.org/10.1016/j.orl.2012.02.001}
}

@article{NN2024_App,
author = {Nishijima, M. and Nakata, K.},
title = {Approximation hierarchies for copositive cone over symmetric cone and their comparison},
journal = {J. Global Optim.},
fjournal = {Journal of Global Optimization},
volume = {88},
number = {4},
pages = {831--870},
year = {2024},
doi = {10.1007/s10898-023-01319-3},
memo = {https://doi.org/10.1007/s10898-023-01319-3 J. Glob. Optim.}
}

@book{Roc1970,
author={Rockafellar, R.T.},
title={Convex Analysis},
edition = {},
publisher={Princeton University Press},
address = {Princeton, NJ},
year={1970}
}

@article{Orl2021,
author = {Orlitzky, M.},
title = {Gaddum's test for symmetric cones},
journal = {J. Global Optim.},
fjournal = {Journal of Global Optimization},
volume = {79},
number = {4},
pages = {927--940},
year = {2021},
doi = {10.1007/s10898-020-00960-6},
memo = {https://doi.org/10.1007/s10898-020-00960-6 J. Glob. Optim.}
}

@article{Pat2007,
author = {Pataki, G.},
title = {On the closedness of the linear image of a closed convex cone},
journal = {Math. Oper. Res.},
fjournal = {Mathematics of Operations Research},
volume = {32},
number = {2},
pages = {395--412},
year = {2007},
doi = {10.1287/moor.1060.0242},
memo = {https://doi.org/10.1287/moor.1060.0242}
}

@article{Pat2013_On,
author = {Pataki, G.},
title = {On the connection of facially exposed and nice cones},
journal = {J. Math. Anal. Appl.},
fjournal = {Journal of Mathematical Analysis and Applications},
volume = {400},
number = {1},
pages = {211--221},
year = {2013},
doi = {10.1016/j.jmaa.2012.10.033},
memo = {https://doi.org/10.1016/j.jmaa.2012.10.033}
}

@article{RG1995,
author = {Ramana, M. and Goldman, A.J.},
title = {Some geometric results in semidefinite programming},
journal = {J. Global Optim.},
fjournal = {Journal of Global Optimization},
volume = {7},
number = {1},
pages = {33--50},
year = {1995},
doi = {10.1007/BF01100204},
memo = {https://doi.org/10.1007/BF01100204 J. Glob. Optim.}
}

@article{RT2019,
author = {Roshchina, V. and Tun\c{c}el, L.},
title = {Facially dual complete (nice) cones and lexicographic tangents},
journal = {SIAM J. Optim.},
fjournal = {SIAM Journal on Optimization},
volume = {29},
number = {3},
pages = {2363--2387},
year = {2019},
doi = {10.1137/17M1126643},
memo = {https://doi.org/10.1137/17M1126643}
}

@book{BN2001,
author={Ben-Tal, A. and Nemirovski, A.},
title={Lectures on Modern Convex Optimization: Analysis, Algorithms, and Engineering Applications},
publisher={Society for Industrial and Applied Mathematics},
address = {Philadelphia, PA},
year={2001},
doi = {10.1137/1.9780898718829},
}

@article{BW1981_Reg,
author = {Borwein, J. and Wolkowicz, H.},
title = {Regularizing the abstract convex program},
journal = {J. Math. Anal. Appl.},
fjournal = {Journal of Mathematical Analysis and Applications},
volume = {83},
number = {2},
pages = {495--530},
year = {1981},
doi = {10.1016/0022-247X(81)90138-4},
memo = {https://doi.org/10.1016/0022-247X(81)90138-4},
}

@article{LRS2022,
author = {Louren\c{c}o, B.F. and Roshchina, V. and Saunderson, J.},
title = {Amenable cones are particularly nice},
journal = {SIAM J. Optim.},
fjournal = {SIAM Journal on Optimization},
volume = {32},
number = {3},
pages = {2347--2375},
year = {2022},
doi = {10.1137/20M138466X},
memo = {https://doi.org/10.1137/20M138466X}
}

@article{Zha2018,
author = {Zhang, Q.},
title = {Completely positive cones: are they facially exposed?},
journal = {Linear Algebra Appl.},
fjournal = {Linear Algebra and its Applications},
volume = {558},
number = {},
pages = {195--204},
year = {2018},
doi = {10.1016/j.laa.2018.08.028},
memo = {https://doi.org/10.1016/j.laa.2018.08.028}
}

@article{NN2024_Gen,
author = {Nishijima, M. and Nakata, K.},
title = {Generalizations of doubly nonnegative cones and their comparison},
journal = {J. Oper. Res. Soc. Japan},
fjournal = {Journal of the Operations Research Society of Japan},
volume = {67},
number = {3},
pages = {84--109},
year = {2024},
doi = {10.15807/jorsj.67.84},
memo = {https://doi.org/10.15807/jorsj.67.84 J. Oper. Res. Soc. Jpn.}
}

@article{NL2025,
author = {Nishijima, M. and Louren\c{c}o, B.F.},
title = {Non-facial exposedness of copositive cones over symmetric cones},
journal = {J. Math. Anal. Appl.},
fjournal = {Journal of Mathematical Analysis and Applications},
volume = {545},
number = {2},
pages = {129166},
year = {2025},
doi = {10.1016/j.jmaa.2024.129166},
memo = {https://doi.org/10.1016/j.jmaa.2024.129166 pagetotal = {17},}
}

@article{Kos2025,
author = {Kostyukova, O.I.},
title = {Non-exposed polyhedral faces of the completely positive cone},
journal = {Linear Multilinear Algebra},
fjournal = {Linear and Multilinear Algebra},
volume = {73},
number = {2},
pages = {368--395},
year = {2025},
doi = {10.1080/03081087.2024.2346313}
}

@article{BW1981_Fac,
author = {Borwein, J.M. and Wolkowicz, H.},
title = {Facial reduction for a cone-convex programming problem},
journal = {J. Aust. Math. Soc.},
fjournal = {Journal of the Australian Mathematical Society},
volume = {30},
number = {3},
pages = {369--380},
year = {1981},
doi = {10.1017/S1446788700017250},
memo = {https://doi.org/10.1017/S1446788700017250},
}

@book{HJ2013,
author={Horn, R.A. and Johnson, C.R.},
title={Matrix Analysis},
edition = {Second},
publisher={Cambridge University Press},
address = {New York, NY},
year={2013},
doi = {10.1017/CBO9781139020411},
memo = {https://doi.org/10.1017/CBO9781139020411}
}

@article{MM1962,
author={Maxfield, J.E. and Minc, H.},
title={On the Matrix Equation {$X'X = A$}},
journal={Proc. Edinburgh Math. Soc.},
fjournal={Proceedings of the Edinburgh Mathematical Society},
volume={13},
number = {2},
pages={125--129},
year={1962},
doi={10.1017/S0013091500014681},
memo = {https://doi.org/10.1017/S0013091500014681 Proc. Edinb. Math. Soc.},
}

@article{NL2026,
author = {Nishijima, M. and Louren\c{c}o, B.F.},
title = {Facial structure of copositive and completely positive cones over a second-order cone},
journal = {Linear Algebra Appl.},
fjournal = {Linear Algebra and its Applications},
volume = {743},
number = {},
pages = {211--249},
year = {2026},
doi = {10.1016/j.laa.2026.04.019},
memo = {https://doi.org/10.1016/j.laa.2026.04.019}
}

@article{GST2004,
author = {Gowda, M.S. and Sznajder, R. and Tao, J.},
title = {Some {{\bf P}}-properties for linear transformations on {E}uclidean {J}ordan algebras},
journal = {Linear Algebra Appl.},
fjournal = {Linear Algebra and its Applications},
volume = {393},
number = {},
pages = {203--232},
year = {2004},
doi = {10.1016/j.laa.2004.03.028},
memo = {https://doi.org/10.1016/j.laa.2004.03.028}
}

@article{Nis2026,
author = {Nishijima, M.},
title = {Copositive and completely positive cones over symmetric cones of rank at least 5},
journal = {J. Optim. Theory Appl.},
fjournal = {Journal of Optimization Theory and Application},
volume = {210},
number = {3},
year = {2026},
pages = {55},
doi = {10.1007/s10957-026-03077-0},
memo = {https://doi.org/10.1007/s10957-026-03077-0 pagetotal = {36},}
}

@article{Hil1888,
author = {Hilbert, D.},
title = {\"{U}ber die {D}arstellung definiter {F}ormen als {S}umme von {F}ormenquadraten},
journal = {Math. Ann.},
fjournal = {Mathematische Annalen},
volume = {32},
number = {3},
pages = {342--350},
year = {1888},
doi = {10.1007/BF01443605},
memo = {https://doi.org/10.1007/BF01443605}
}

@article{Zha2020,
author = {Zhang, Q.},
title = {Faces of the $5\times 5$ completely positive cone},
journal = {Linear Multilinear Algebra},
fjournal = {Linear and Multilinear Algebra},
volume = {68},
number = {12},
pages = {2523--2540},
year = {2020},
doi = {10.1080/03081087.2019.1586827},
memo = {https://doi.org/10.1080/03081087.2019.1586827}
}

@article{Kos2024,
author = {Kostyukova, O.I.},
title = {Non-exposed faces of the cone of completely positive matrices},
journal = {Proceedings of the Institute of Mathematics of the National Academy of Sciences of Belarus},
volume = {32},
number = {2},
pages = {56--68},
year = {2024},
memo = {journal = Proc. Inst. Math. Natl. Acad. Sci. Belarus}
}

@article{Fle1985,
author = {Fletcher, R.},
title = {Semi-definite matrix constraints in optimization},
journal = {SIAM J. Control Optim.},
fjournal = {SIAM Journal on Control and Optimization},
volume = {23},
number = {4},
pages = {493--513},
year = {1985},
doi = {10.1137/0323032},
memo = {https://doi.org/10.1137/0323032}
}

@article{Tau1968,
author = {Taussky, O.},
title = {The factorization of the adjugate of a finite matrix},
journal = {Linear Algebra Appl.},
fjournal = {Linear Algebra and its Applications},
volume = {1},
number = {1},
pages = {39--41},
year = {1968},
doi = {10.1016/0024-3795(68)90046-3},
memo = {https://doi.org/10.1016/0024-3795(68)90046-3}
}

@incollection{NN2016,
author={N\'{e}meth, A.B. and N\'{e}meth, S.Z.},
title={Lattice-like Subsets of {E}uclidean {J}ordan Algebras},
editor={Rassias, T. and Pardalos, P.},
booktitle={Essays in Mathematics and its Applications},
pages={159--179},
publisher={Springer, Cham, Switzerland},
year={2016},
doi = {10.1007/978-3-319-31338-2_8},
memo = {https://doi.org/10.1007/978-3-319-31338-2_8}
}

@Misc{HX20XX,
author = {Huang, L. and Xie, L.},
title = {A finite-termination algorithm for testing copositivity over the positive semidefinite cone},
eprint = {2601.06648},
year = {2026},
eprintclass = {math.OC},
memo = {https://doi.org/10.48550/arXiv.2601.06648}
}

@article{FGN+2024,
author = {Ferreira, O.P. and Gao, Y. and N\'{e}meth, S.Z. and  Rig\'{o}, P.R.},
title = {Gradient projection method on the sphere, complementarity problems and copositivity},
journal = {J. Global Optim.},
fjournal = {Journal of Global Optimization},
volume = {90},
number = {1},
pages = {1--25},
year = {2024},
doi = {10.1007/s10898-024-01390-4},
memo = {https://doi.org/10.1007/s10898-024-01390-4 J. Glob. Optim.}
}

@article{Orl2025_JorAut,
author = {Orlitzky, M.},
title = {Jordan automorphisms and derivatives of symmetric cones},
journal = {Linear Algebra Appl.},
fjournal = {Linear Algebra and its Applications},
volume = {721},
number = {},
pages = {26--46},
year = {2025},
doi = {10.1016/j.laa.2024.04.024},
memo = {https://doi.org/10.1016/j.laa.2024.04.024}
}

@article{Ros2014,
author = {Roshchina, V.},
title = {Facially exposed cones are not always nice},
journal = {SIAM J. Optim.},
fjournal = {SIAM Journal on Optimization},
volume = {24},
number = {1},
pages = {257--268},
year = {2014},
doi = {10.1137/130922069},
memo = {https://doi.org/10.1137/130922069}
}
\end{document}